\setlist[enumerate]{itemsep=1em}
\DeclareSymbolFontAlphabet{\mathbb}{AMSb}
\DeclareSymbolFontAlphabet{\mathbbl}{bbold}
\newcommand{\cosimp}[3]{\xymatrix@1{#1 \ar@<.4ex>[r] \ar@<-.4ex>[r] & {\ }#2 \ar@<0.8ex>[r] \ar[r] \ar@<-.8ex>[r] & {\ } #3 \ar@<1.2ex>[r] \ar@<.4ex>[r] \ar@<-.4ex>[r] \ar@<-1.2ex>[r] & \cdots }}
\newcommand{\colim}{\mathop{\mathrm{colim}}}
\newcommand{\adjunction}[4]{\xymatrix@1{#1{\ } \ar@<0.3ex>[r]^{ {\scriptstyle #2}} & {\ } #3 \ar@<0.3ex>[l]^{ {\scriptstyle #4}}}}
\renewcommand{\tocsection}[3]{%
	\indentlabel{\@ifnotempty{#2}{\bfseries\ignorespaces#1 #2\quad}}\bfseries#3}
\renewcommand{\tocsubsection}[3]{%
	\indentlabel{\@ifnotempty{#2}{\ignorespaces#1 #2\quad}}#3}
\newcommand\@dotsep{4.5}
\def\@tocline#1#2#3#4#5#6#7{\relax
	\ifnum #1>\c@tocdepth
	\else
	\par \addpenalty\@secpenalty\addvspace{8pt}
	\begingroup \hyphenpenalty\@M
	\@ifempty{#4}{%
		\@tempdima\csname r@tocindent\number#1\endcsname\relax
	}{%
		\@tempdima#4\relax
	}%
	\parindent\z@ \leftskip#3\relax \advance\leftskip\@tempdima\relax
	\rightskip\@pnumwidth plus1em \parfillskip-\@pnumwidth
	#5\leavevmode\hskip-\@tempdima{#6}\nobreak
	\leaders\hbox{$\m@th\mkern \@dotsep mu\hbox{.}\mkern \@dotsep mu$}\hfill
	\nobreak
	\hbox to\@pnumwidth{\@tocpagenum{\ifnum#1=1\bfseries\fi#7}}\par
	\nobreak
	\endgroup
	\fi}
\renewcommand\csname r@tocindent0\endcsname{0pt}
\def\l@section{\@tocline{1}{8pt}{0pc}{5pc}{}}
\def\l@subsection{\@tocline{2}{6pt}{2.5pc}{5pc}{}}
\def\l@subsubsection{\@tocline{3}{4pt}{5pc}{5pc}{}}
\title{On relative vertical compactification of \\ weakly square complete adic spaces}
\author{Ronald Solodov}
\date{}
\begin{document}

\newtheorem{theorem}{Theorem}[section]
\newtheorem*{theorem*}{Theorem}
\newtheorem*{definition*}{Definition}
\newtheorem{proposition}[theorem]{Proposition}
\newtheorem{lemma}[theorem]{Lemma}
\newtheorem{corollary}[theorem]{Corollary}

\theoremstyle{definition}
\newtheorem{definition}[theorem]{Definition}
\newtheorem{question}[theorem]{Question}
\newtheorem{remark}[theorem]{Remark}
\newtheorem{warning}[theorem]{Warning}
\newtheorem{example}[theorem]{Example}
\newtheorem{notation}[theorem]{Notation}
\newtheorem{convention}[theorem]{Convention}
\newtheorem{construction}[theorem]{Construction}
\newtheorem{claim}[theorem]{Claim}
\newtheorem{assumption}[theorem]{Assumption}

\crefname{assumption}{assumption}{assumptions}
\crefname{construction}{construction}{constructions}

\begin{abstract}
	
	We prove for a morphism $f \colon X \rightarrow S$ locally of $^+$weakly finite type, separated and taut, where $X$ is a weakly square complete adic space and $S$ a square complete and stable adic space, there exists a universal vertical compactification $f' \colon X' \rightarrow S$. This provides a generalized version of Huber's proof of universal compactification of morphisms between analytic adic spaces. Notably, we will see that for the compactification, it is not necessary to assume that the adic spaces are locally Noetherian. In the fourth section we give an explicit and simple construction of $X'$. 
	
\end{abstract}

	\maketitle

	\tableofcontents
	\newpage 	
	\thispagestyle{empty}
	
\section{Introduction}
\subsection{History and statement of main theorem}

The relative compactification of schemes was already proven by  M. Nagata in \cite{Nag62} and \cite{Nag63}. More precisely, it was shown that a separated and finite type morphism to a Noetherian scheme $S$ can be factored into an open immersion followed by a proper morphism. In \cite{Hu1996}, Huber proved that a separated, locally of $^+$weakly finite type and taut morphism $X \rightarrow S$ of analytic adic spaces can be factored into an open immersion $X \hookrightarrow X'$ followed by a partially proper morphism. In contrast to M. Nagata's result, the space $X'$ is unique up to unique isomorphism. In this article, we generalize Huber’s result to some extent, and we expect that our construction will contribute to extending the six functor formalism to a wider class of compactifiable morphisms. We now state our main theorem
\bigskip
\begin{theorem}[Theorem \ref{main}] \label{main'}
	
	Let $f \colon X \rightarrow S$ be a locally of $^+$weakly finite type, separated and taut morphism of adic spaces, where $X$ is weakly square complete and $S$ a square complete and stable adic space. Then there exists a universal vertical compactification $f' \colon X' \rightarrow S$ of $f$.
	
\end{theorem}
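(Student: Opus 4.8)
The plan is to follow the template of Huber's construction in the analytic case (\cite{Hu1996}, \S5.1), the new content being that the square-completeness hypotheses on $X$ and $S$ are exactly what keep the intermediate constructions inside the category of adic spaces, so that local Noetherianity can be dispensed with. I would take the universal property in the form: $f' \colon X' \to S$ partially proper together with an open immersion $j \colon X \hookrightarrow X'$ over $S$ such that every $S$-morphism from $X$ to a partially proper $S$-space factors uniquely through $j$ (equivalently, $(X',j)$ is initial among vertical compactifications of $f$). This is local on $S$, so first I would reduce to the case $S = \mathrm{Spa}(A,A^+)$ affinoid, gluing the resulting spaces over an affinoid cover of $S$ by means of the universal property. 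Since $f$ is locally of $^+$weakly finite type, I can then cover $X$ by affinoid opens $U_i$ for which $f|_{U_i}$ factors as a closed immersion $U_i \hookrightarrow V_i$ with $V_i$ an open subspace of a relative closed polydisc $\mathbb{D}^{n_i}_S = \mathrm{Spa}\bigl(A\langle T_1,\dots,T_{n_i}\rangle, A^+\langle T_1,\dots,T_{n_i}\rangle\bigr)$; that $S$ is square complete and stable is what guarantees here that $\mathbb{D}^{n_i}_S$ is an adic space and that $\mathbb{D}^{n_i}_S \to S$ is partially proper.

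Next, for each $i$ let $\overline{U}_i$ be the closure of $U_i$ in $\mathbb{D}^{n_i}_S$, equipped with its canonical structure of closed subspace. Granting for the moment that $\overline{U}_i$ is again an adic space, it is partially proper over $S$ (a closed immersion followed by a partially proper morphism), and $U_i = \overline{U}_i \cap V_i$ is open in $\overline{U}_i$ because $U_i$ is closed in $V_i$, so $U_i$ agrees with the trace on $V_i$ of its closure in $\mathbb{D}^{n_i}_S$. I would then establish the local form of the universal property directly: for any partially proper $Y \to S$ and any $S$-morphism $g \colon U_i \to Y$, tautness of $f$ (controlling $\overline{U}_i \smallsetminus U_i$) together with partial properness of $Y \to S$ let one extend $g$, uniquely, to an $S$-morphism $\overline{U}_i \to Y$; in particular $\overline{U}_i$ does not depend, up to unique isomorphism under $U_i$, on the chosen embedding.

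To globalize, I would apply this local universal property to the open immersions $U_i \cap U_j \hookrightarrow U_i$ and $U_i \cap U_j \hookrightarrow U_j$, obtaining a canonical adic space $W_{ij}$ --- the universal vertical compactification of $U_i \cap U_j$ --- together with open immersions $W_{ij} \hookrightarrow \overline{U}_i$ and $W_{ij} \hookrightarrow \overline{U}_j$ that are compatible on triple overlaps by uniqueness, and then glue the $\overline{U}_i$ along the $W_{ij}$ to get $X'$ with an open immersion $X \hookrightarrow X'$ over $S$. That $f' \colon X' \to S$ is partially proper then follows: it is locally of $^+$weakly finite type and taut because $f$ is, it is separated because the diagonal of $X'/S$ is covered by the closed diagonals of the $\overline{U}_i/S$, and the valuative criterion may be checked on the cover. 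For universality, given a partially proper $Y \to S$ and an $S$-morphism $X \to Y$, I would restrict to $U_i \to Y$, extend uniquely to $\overline{U}_i \to Y$ as above, and glue these using the uniqueness statement on the $W_{ij}$, producing the required unique $S$-morphism $X' \to Y$ under $X$.

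The hard part will be the claim used in the second step, namely that the closure $\overline{U}_i$ of an affinoid in a relative closed polydisc carries a sheafy structure presheaf and is therefore a genuine adic space --- sheafiness is not, in general, inherited by quotients, and this is precisely the point at which Huber invokes local Noetherianity/analyticity. Making this work without such an assumption is the role of the hypotheses that $X$ be weakly square complete and $S$ be square complete and stable. A secondary, more routine difficulty is verifying that tautness and separatedness of $f$ persist through the gluing so that $f'$ is genuinely partially proper.
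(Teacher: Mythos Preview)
Your proposal has a genuine gap in the gluing step, and it also misidentifies where the real difficulty lies.

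First, the approach via closures in polydiscs is a detour. The paper defines the affinoid compactification $U_c = \overline{U}^S$ intrinsically as $\mathrm{Spa}(\mathcal{O}_U(U), I(U))$, where $I(U)$ is the integral closure of $\mathcal{O}_S^+(S)[\mathcal{O}_U(U)^{\circ\circ}]$ in $\mathcal{O}_U(U)$. This has the \emph{same} underlying Huber ring as $U$, only a smaller ring of integral elements, so sheafiness is automatic and no embedding into a polydisc is needed. The worry you flag as ``the hard part'' does not arise.

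Second, and more seriously, your gluing does not work: the canonical map from the universal compactification $W_{ij}$ of $U_i \cap U_j$ into $\overline{U}_i$ is injective and a topological embedding, but it is \emph{not} an open immersion in general. A point of $\overline{U}_i$ can be the center of a valuation ring $(x,A)$ on $U_i$ such that some intermediate valuation ring $A \subset B \subset k(x)^+$ has a center on $X$ lying \emph{outside} $U_i$; such a point must not be glued to the copy of $\overline{U}_i$, or separatedness fails. The paper's key construction is precisely the open subset $U_d \subsetneq U_c$ consisting of centers of valuation rings in $T(U)$ --- those $(x,A)$ for which every intermediate $(x,B)$ either has no center on $X$ or has its center already in $U$. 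It is the $U_d$, not the full $U_c$, that glue correctly, and proving that $U_d$ is open in $U_c$ (and that $(U \cap V)_d \to U_d$ is an open immersion) is where the weak square completeness of $X$ actually enters, via a lemma controlling horizontal specializations of such centers. Your proposal lacks any analogue of $U_d$ and $T(U)$, so the globalization step as written cannot be carried out.
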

It will turn out in Proposition \ref{weaker} that $X'$ is necessarily weakly square complete if $X' \rightarrow S$ is a universal vertical compactification of a morphism $X \rightarrow S$ with $S$ a square complete adic space. 

By a square complete adic spaces $S$, we refer to the following property satisfied by every affinoid space: namely, that for any specialization diagram with points in $S$ of the given form
\begin{center}
	\begin{tikzcd}
		x_1 \arrow[r, leftsquigarrow] & x \\
		 & x_2 \arrow[u, leftsquigarrow] \\
	\end{tikzcd}
\end{center}
there exists a unique point $x'$ in $S$ completing the diagram in the lower left corner. With $y \rightsquigarrow x$ we mean that $x$ is a specialization of $y$ and if this arrow occurs in a diagram its orientation means horizontal specialization or vertical specialization. If $x'$ is not unique then we call the adic space weakly square complete.
\bigskip

In the course of proving the theorem, we will see that Huber’s original assumption in \cite[~ 1.1.1]{Hu1993}  that all adic spaces are locally Noetherian is not required. Instead, it is sufficient to assume that they are stable adic. Those assumptions appear solely in \cite[Lemma1.3.14(i)]{Hu1996}, and we will treat them in Lemma\ref{mod}.

Another main goal in this paper is to give a simple construction of $X'$. For this we will construct for every morphism $f \colon X \rightarrow S$ of adic spaces the set $\mathrm{Spa}(X,S)$, which consists of triples $(x,V,s)$ where $x \in X$ has no vertical generalizations, $(x,V)$ is a valuation ring on $X$ and $s$ is a center of $(x,V)$ on $S$. The set $\mathrm{Spa}(X,S)$ will be endowed with a topology and we will see:
\bigskip
\begin{theorem}[Theorem \ref{spa}]
	
	Let $f$ be like in Theorem \ref{main'} and $f' \colon X' \rightarrow S$ the universal vertical compactification of $f$. Then $\mathrm{Spa}(X,S)$ is homeomorphic to $X'$.
	
\end{theorem}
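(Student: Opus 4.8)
The plan is to identify the two spaces point by point and then to compare the two topologies, using the universal property of $X'$ supplied by Theorem~\ref{main'} together with a valuative criterion for vertically partially proper morphisms.

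\textbf{From $X'$ to $\mathrm{Spa}(X,S)$.} Let $x'\in X'$. Since $f'$ is a vertical compactification, $X\hookrightarrow X'$ is open and the vertical generalizations of $x'$ in $X'$ form a chain whose maximal element $x$ lies in $X$ and has no vertical generalization inside $X$; this is precisely the feature that singles out the boundary $X'\setminus X$. Along the vertical specialization $x\rightsquigarrow x'$ the residue field is unchanged, so $V:=k(x')^{+}$ is a valuation ring of $k(x)$ contained in $k(x)^{+}$, i.e.\ $(x,V)$ is a valuation ring on $X$ at the vertically maximal point $x$. Put $s:=f'(x')$; applying $f'$ to $x\rightsquigarrow x'$ exhibits $V$ as dominating the local ring of $S$ at $s$ over $X\to S$, so $s$ is a center of $(x,V)$ on $S$. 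Hence $\psi(x'):=(x,V,s)$ defines a map $\psi\colon |X'|\to \mathrm{Spa}(X,S)$.

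\textbf{From $\mathrm{Spa}(X,S)$ to $X'$.} Conversely, a triple $(x,V,s)$ is exactly the data of a lifting problem for the vertically partially proper morphism $f'\colon X'\to S$: the point $x\in X\subseteq X'$ sitting over the ``generic point'' of $V$, together with the center $s\in S$ over its ``closed point''. By the valuative criterion there is a vertical specialization $x'$ of $x$ in $X'$ with $k(x')^{+}=V$ and $f'(x')=s$, and we set $\varphi(x,V,s):=x'$. Separatedness of $f'$ together with the weak square completeness of $X'$ furnished by Proposition~\ref{weaker} yields uniqueness of this lift, whence $\varphi$ and $\psi$ are mutually inverse bijections.

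\textbf{Comparison of topologies and conclusion.} Because $X\hookrightarrow X'$ is an open immersion with dense image and $f'$ is taut and separated, the topology of $X'$ is generated by the open subsets of $X$ together with boundary neighborhoods which are (roughly) quasi-compact opens of the form $f'^{-1}(U)\cap \overline{W}$ for $U\subseteq S$ and $W\subseteq X$ open. Matching these against the explicitly defined basic opens of $\mathrm{Spa}(X,S)$ — cut out by the conditions that $x$ lie in a prescribed open of $X$, that finitely many inequalities hold in $V$, and that $s$ lie in a prescribed open of $S$ — shows that $\varphi$ and $\psi=\varphi^{-1}$ are continuous and open on these generating families, so $\varphi$ is a homeomorphism.

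\textbf{Main obstacle.} The delicate point is the surjectivity implicit in the second step: that the abstract space $X'$ of Theorem~\ref{main'} carries \emph{exactly} the points predicted by the triples $(x,V,s)$, i.e.\ a sharp valuative criterion for vertically partially proper morphisms outside the analytic Noetherian setting — precisely the situation we handle via Lemma~\ref{mod}. Once the point sets are matched, the topological comparison is essentially bookkeeping between the two explicit bases, but it is there that one must check that no spurious opens are introduced by the closures $\overline{W}$ and that every boundary neighborhood of $X'$ is reflected by an inequality condition on $V$.
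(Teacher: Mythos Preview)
Your construction of the bijection is essentially the same as the paper's, though you needlessly invoke weak square completeness of $X'$ for uniqueness of the lift: once $s$ is fixed, vertical separatedness of $f'$ alone guarantees that the valuation ring $(x,V)$ has at most one center on $X'$ lying over $s$.

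The genuine gap is in your topology comparison. You describe the topology on $\mathrm{Spa}(X,S)$ as having ``basic opens cut out by the conditions that $x$ lie in a prescribed open of $X$, that finitely many inequalities hold in $V$, and that $s$ lie in a prescribed open of $S$'', but this is not how the paper defines it. By Definition~\ref{top} the topology on $\mathrm{Spa}(X,S)$ is the \emph{final topology} for the family of inclusions $\mathrm{Spa}(U,V)\hookrightarrow\mathrm{Spa}(X,S)$ over all affinoid opens $U\subset X$, $V\subset S$ with $f(U)\subset V$, where $\mathrm{Spa}(U,V)$ carries the topology of $\overline{U}^V$ via Proposition~\ref{bab}. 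There is no explicit basis of the form you sketch, and your description of a basis for $X'$ via closures $\overline{W}$ is likewise not what is available from the construction. Consequently your ``bookkeeping'' step does not go through.

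The paper's argument proceeds differently. Continuity of $\phi\colon\mathrm{Spa}(X,S)\to X'$ is checked by producing, for each such pair $(U,V)$, a morphism of adic spaces $h\colon\overline{U}^V\to X'$ making the square with $\phi$ and the inclusion $\psi\colon\mathrm{Spa}(U,V)\hookrightarrow\mathrm{Spa}(X,S)$ commute; this $h$ exists because $f'^{-1}(V)\to V$ is itself a universal vertical compactification (Lemma~\ref{mod}~iv)), so the universal property of $\overline{U}^V\to V$ supplies it. Openness of $\phi$ then uses the \emph{internal} structure of $X'$ from the proof of Theorem~\ref{main}: $f'^{-1}(V)$ is covered by the $U_d\subset\overline{U}^V$, and on $U_d$ the map $h$ agrees with the gluing inclusion $\iota\colon U_d\hookrightarrow f'^{-1}(V)$. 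Hence for $W\subset\mathrm{Spa}(X,S)$ open, $h^{-1}(\phi(W))$ is open in $\overline{U}^V$, so $\phi(W)\cap U_d$ is open, and since the $U_d$ cover $X'$ one concludes. Your proposal misses both ingredients: the universal-property map $h$ and the $U_d$-cover of $X'$.
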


\subsection{Sketch of proof}

We give a brief summary about the original proof: Huber's proof in \cite[Theorem ~ 5.1.5]{Hu1996} crucially depends on the notion of valuation rings on analytic adic spaces. 

\begin{definition}\label{special}
			Let $X$ be an analytic adic space.
	\begin{enumerate}[label=\roman*)]	

		\item  A $valuation \ ring$ $of$ $X$ is a pair $(x,A)$, where $x \in X$ and $A$ is a valuation ring of the residue class field $k(x)$ of $X$ with $A \subset k(x)^+$ (sometimes we write only $A$ instead of $(x,A)$). The point $x$ is called the $support$ of $(x,A)$. 
		\item A point $y \in X$ is called a $center$ $of$ $(x,A)$ if $y$ is a specialization of $x$ with $k(y)^+ = i^{-1}(A)$ where $i \colon k(y) \rightarrow k(x)$ is the natural ring homomorphism.	
		\item For an analytic adic space $X$, we denote by $X_{\nu}$ the set of all valuation rings of $X$.
	\end{enumerate}
\end{definition}  

As a first step, Huber shows it is enough to consider an open affinoid covering $(S_i)_i$ of $S$ and construct for each morphism $f^{-1}(S_i) \rightarrow S_i$ a universal compactification $X'_i \rightarrow S_i$. \cite[Lemma ~ 5.1.7]{Hu1996} shows that the morphism $X' \rightarrow S$ which arise from gluing all the universal compactification $X'_i \rightarrow S_i$ is a universal compactification of $X \rightarrow S$. Hence, for the proof we can assume that $S$ is an affinoid space.   Then for every open affinoid subset $U$ of $X$  a certain open subset $U_d$ in $U_c$ (or more precisely $\overline{U}^S$) is brought into consideration. Note that $U_c$ is the universal compactification of $U \rightarrow S$ for $U$ and $S$ affinoid adic spaces. The subset $U_d$ consists of centers of certain valuation rings of $U$. The rest of the proof consists in gluing the spaces $U_d$ corresponding to each open affinoid subset $U \subset X$, and in showing that the resulting morphism $X' \rightarrow S$ defines a universal compactification of $X \rightarrow S$, once again invoking \cite[Lemma~5.1.7]{Hu1996}. We will adapt Huber's proof and for this we will assume that $X$ is weakly square complete and $S$ a square complete and stable adic space. The latter is invoked solely to avoid the need for assuming that the adic spaces are locally Noetherian.

\subsection{Overview of the sections}

\begin{itemize}
	\item Section \ref{2}: We recall some properties of valuated valuation rings. The definition of the center of a valuated valuation ring (Definition \ref{vvr}) is important for constructing a universal vertical compactification $f' \colon X' \rightarrow S$, since we will glue $X'$ together from these centers.
	\item Section \ref{3}: In this main section, we define the universal vertical compactification and prove its existence by following Huber’s original argument.	\begin{itemize}
				\item First we recollect some properties of morphism between adic spaces. Then in Definition \ref{proposition} we define vertically partially proper morphism and in Definition \ref{comp} universal vertical compactification.
				\item Here we proof the existence of a universal vertical compactification $f' \colon U_c \rightarrow S$ for certain morphism $U \rightarrow S$ between affinoid adic spaces. In this case $U_c$ will again be affinoid and we can give a concrete description of it. 
				\item Finally, we show for which morphism $f \colon X \rightarrow S$ a universal vertical compactification $f' \colon X' \rightarrow S$ exist, where $X$ is weakly square complete and $S$ a square complete stable adic space. In particular, $X'$ will be glued together of centers in $U_c$ of certain valuated valuation rings of $U$ for every open affinoid subspace $U$ of $X$. Notably, we can avoid locally Noetherian adic spaces. The hypothesis is only needed in \cite[Lemma ~ 1.3.14 i)]{Hu1996}, which is required to show that the datum defined in Theorem \ref{main} is indeed a gluing datum. In Lemma \ref{mod} we provide an alternative argument that the proof of \cite[Lemma ~ 1.3.14 i)]{Hu1996} remains valid, adding a natural condition and removing the Noetherian hypothesis.
		\end{itemize}
	\item  Section \ref{4}: For every morphism $f \colon X \rightarrow S$ of adic spaces we construct a set $\mathrm{Spa}(X,S)$ which is in bijection with $X'$ if the universal vertical compactification $f' \colon X' \rightarrow S$ of $f$ exists. In particular, we endow $\mathrm{Spa}(X,S)$ with the final topology induced by the family of maps $\mathrm{Spa}(U,V) \rightarrow \mathrm{Spa}(X,S)$ for every open affinoid subspace $U \subset X$ and $V \subset S$. The topology on $\mathrm{Spa}(U,V)$ is defined by $\overline{U}^V$, the universal vertical compactification of $U \rightarrow V$, provided this morphism is of $^+$weakly finite type.
\end{itemize}

\subsection{Notes on the references}

	In this article, we will refer to some statements in \cite{Hu1993} and \cite{Hu1996}. We want to point out that in \cite[Definition ~ 3.6.4]{Hu1993} an affinoid adic space is an object which is isomorphic to a Spa$(A,A^+)$ where $A$ has a Noetherian ring of definition and an adic space has an open covering of such affinoid adic spaces. But all statements which we will use are also true for adic spaces in the usual sense without any assumption about the ring of definition of $A$. We would also like to note that in this source, valuations are defined additively, so the roles of maximum and minimum of points are reversed. However, we will already mention here that, although in \cite{Hu1996} Huber makes overall assumptions about all Huber rings he considers, these assumptions are not necessary for our purpose at all. Still, there will be instances in which we require, given a morphism $X \rightarrow S$ locally of weakly finite type, that $S$ is a stable adic space to ensure the existence of fiber products with itself.
	
\bigskip

$\mathbf{Acknowledgements.}$ The author would like to thank Katharina Hübner for taking the time to discuss various aspects of the thesis in detail; in particular, the author is grateful for the ideas and approaches to the proofs in the final chapter. Especially the statements in the final sections are based on ideas originally suggested by her.  The author also wishes to thank Torsten Wedhorn for suggesting the topic, thereby opening the door to the study of adic spaces, as well as for the general academic guidance throughout the thesis. Last but not least, the author is thankful to Roland Huber for answering emails and providing a better understanding of his original proof. Furthermore, Jon Miles is thanked for his helpful comments on how the article should be structured and formatted.

The author would like to express his heartfelt thanks to his fellow students Paul Burk, Michelle Klemt, Johann Gramzow, Hannah Laus, Niklas Ludwig, Aaron Rauchfuß and Benjamin Steklov, who accompanied him throughout his studies and with whom he had the pleasure of learning and exploring mathematics together.

\section{Background on valuated valuation rings} \label{2}

Since the specialization of two points in analytic adic spaces is always vertical, we have to understand the behaviour of valuation rings in general adic spaces. Fortunately, Huber already defined a more general version of valuation rings on adic spaces in \cite[Definition ~ 3.11.8]{Hu1993} and with it he has established several results:

\begin{definition}\label{vvr}
	\begin{enumerate}[label=\roman*)]
		
		\item A $valuated$ $local$ $ring$ is a local ring $A$ with a valuation $\nu$ on $A$ such that the support of $\nu$ is the maximal ideal of $A$. Let $A_{\nu}$ be the subring of those elements of $A$ with values smaller or equal one. We sometimes write more explicitly $(A,A_{\nu})$ if $A$ is a valuated local ring.
		
		\item Let $A$ and $B$ be valuated local rings with valuations $\nu$ and $\omega$. A \emph{ring homomorphism} $f \colon A \rightarrow B$ between valuated local rings is a ring homomorphism which is local, $f(A_{\nu}) \subset B_{\omega}$ and the induced ring homomorphism $A_{\nu} \rightarrow B_{\omega}$ is local.

		\item	A $valuated \ valuation \ ring$ is a valuated local ring $A$ which is a valuation ring. 
		
		\item Let $X$ be an adic space and $x \in X$. We call $(x,A, A_{\nu})$ a $valuated$ $valuation$ $ring$ of $X$ with support $x$ if  $(A,A_{\nu})$ is a valuated valuation ring such that $A \subset k(x)$, $\text{Quot}(A) = k(x)$ and $A_{\nu} \subset k(x)^+.$
		
		\item We call a valuated valuation ring of an adic space $X$ of the form $(x,k(x), k(x)_{\nu})$ $valuation$ $ring$ and write shortly $(x,A)$ for a valuation ring $A \subset k(x)^+$.
		
		\item Let $X$ be an adic space, $A$ a valuated valuation ring of $X$ with support $x$. A $center$ of $A$ is a specialization $y$ of $x$ such that for the canonical map $i\colon \mathcal{O}_{X,y} \rightarrow k(x)$ we have $i(\mathcal{O}_{X,y}) \subset A$ and its restriction $\mathcal{O}_{X,y} \rightarrow A$ is a local ring homomorphism between the valuated local rings $(\mathcal{O}_{X,y}, \nu_y)$ and $(A, \nu)$.
	\end{enumerate}
\end{definition}

\begin{remark}
	\begin{enumerate}[label=\roman*)]
		\item If $A$ is a valuated valuation ring, then $A_{\nu}$ is a valuation ring with the same field of fractions as $A$.
		
		\item If $(x,A,A_{\nu})$ is a valuated valuation ring of an adic space $X$, then $(x, A_{\nu})$ is a valuation ring.
		
		\item For valuation rings $(x,A)$ the definition of a center simplifies to Definition \ref{special} ii). 
	\end{enumerate}
\end{remark}

We collect and prove some facts about valuated valuation rings.

\begin{lemma}\label{center}
	Let $X$ be an adic space.
	\begin{enumerate}[label=\roman*)]
		\item Let $A$ be a valuation ring on $X$ with support $x$ and $y$ a center of $A$. Then $y$ is a vertical specialization of $x$.
		\item Let $x,y \in X$ and let $y$ be a vertical specialization of $x$. Then there exists a  valuation ring $(x,A)$ with support $x$ and center $y$.
		\item Let $y$ be a vertical specialization of $x$, then there exists exactly one valuated valuation ring on $X$ with support $x$ and center $y$.
		\item For every $x,y \in X$ where $y$ is a specialization of $x$, exists a valuated valuation ring $(x,A,A_{\nu})$ with center $y$.
	\end{enumerate}
\end{lemma}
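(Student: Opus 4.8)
All four assertions are local on $X$, so I would first reduce to $X=\mathrm{Spa}(B,B^{+})$ affinoid and then argue with Huber's analysis of points and specializations from \cite{Hu1993}. The facts I would use are: for $x\in X$ the support $\mathrm{supp}(v_{x})$ is the maximal ideal of $\mathcal O_{X,x}$, so $\mathcal O_{X,x}\twoheadrightarrow k(x)$, the pair $(\mathcal O_{X,x},v_{x})$ is a valuated local ring with valuation ring $\mathcal O_{X,x}^{+}$, and $k(x)^{+}$ is the valuation ring of $v_{x}$ on $k(x)$; that a specialization $y$ of $x$ is vertical precisely when the generization map $\mathcal O_{X,y}\to\mathcal O_{X,x}$ is local, equivalently when it induces an isomorphism $k(y)\cong k(x)$ under which $k(y)^{+}\subseteq k(x)^{+}$; and that in general a specialization $y$ of $x$ is described by a horizontal part (a prime $\mathfrak q\subseteq k(x)^{+}$, with localization $(k(x)^{+})_{\mathfrak q}$, new residue field $\mathrm{Frac}(k(x)^{+}/\mathfrak q)$ and intermediate point $z$) followed by a vertical part (a valuation subring of $\mathrm{Frac}(k(x)^{+}/\mathfrak q)$ contained in $k(x)^{+}/\mathfrak q$), the latter being $k(y)^{+}$.

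For (i): by Definition~\ref{vvr}~vi), a center $y$ of the valuation ring $(x,A)$ makes $\mathcal O_{X,y}\to k(x)$ a local ring homomorphism. Since it factors as $\mathcal O_{X,y}\to\mathcal O_{X,x}\to k(x)$ with the second map local (kernel $\mathrm{supp}(v_{x})=\mathfrak m_{\mathcal O_{X,x}}$), locality of the composite forces $\mathcal O_{X,y}\to\mathcal O_{X,x}$ to be local, i.e.\ $y$ is a vertical specialization of $x$; this also reconciles Definition~\ref{vvr}~vi) with Definition~\ref{special}~ii) in the present case. For (ii): given $y$ vertical over $x$, put $A:=k(y)^{+}$, viewed inside $k(x)$ via $k(y)\cong k(x)$; then $A\subseteq k(x)^{+}$, so $(x,A)$ is a valuation ring on $X$, and one checks directly that $y$ is a center, the canonical map $\mathcal O_{X,y}\to k(x)$ being local and restricting to a local surjection $\mathcal O_{X,y}^{+}\twoheadrightarrow k(y)^{+}=A$.

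For (iii): existence is (ii). For uniqueness let $(x,A_{1},A_{1,\nu})$ and $(x,A_{2},A_{2,\nu})$ both have center $y$. As $y$ is vertical over $x$, the image of $\mathcal O_{X,y}$ in $k(x)$ is all of $k(x)$, and the center condition puts this image inside $A_{j}$; hence $A_{j}=k(x)$. Then $A_{j,\nu}$ is a valuation ring of $k(x)$, contained in $k(x)^{+}$, dominating the image $k(y)^{+}$ of $\mathcal O_{X,y}^{+}$; since a valuation ring of a field that dominates another valuation ring of that field must equal it, $A_{j,\nu}=k(y)^{+}$. So both equal the valuation ring $(x,k(x),k(y)^{+})$ of (ii).

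For (iv): present $y$ by its horizontal datum $\mathfrak q\subseteq k(x)^{+}$ and its vertical datum $k(y)^{+}\subseteq k(x)^{+}/\mathfrak q$ inside $k(z):=\mathrm{Frac}(k(x)^{+}/\mathfrak q)$. Set $A:=(k(x)^{+})_{\mathfrak q}$, a valuation ring of $k(x)$ with $\mathrm{Quot}(A)=k(x)$ and $A/\mathfrak m_{A}=k(z)=k(y)$, and let $A_{\nu}$ be the preimage of $k(y)^{+}$ in $A$; then $(A,A_{\nu})$ is a valuated valuation ring. The delicate point is $A_{\nu}\subseteq k(x)^{+}$: any $a\in A\setminus k(x)^{+}$ has strictly negative value for the valuation attached to $k(x)^{+}/\mathfrak q$, hence also for the finer valuation with valuation ring $k(y)^{+}$, so $a\notin A_{\nu}$; since $k(x)^{+}\subseteq A$ this gives the claim. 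It then remains to see that $y$ is a center, i.e.\ that $\mathcal O_{X,y}\to\mathcal O_{X,z}\to\mathcal O_{X,x}$ has image in $k(x)$ lying inside $A$, is local into $A$ (it factors through the local maps $\mathcal O_{X,y}\to\mathcal O_{X,z}$, vertical, and $\mathcal O_{X,z}\to A$, the horizontal localization) and carries $\mathcal O_{X,y}^{+}$ compatibly into $A_{\nu}$, using $\mathcal O_{X,y}^{+}\twoheadrightarrow k(y)^{+}$. Non-uniqueness in (iv) is genuine once both $\mathfrak q$ and the vertical refinement are non-trivial, since then the coarse ring $A$ is not forced. The steps I expect to cost the most effort are the precise extraction from \cite{Hu1993} of the structural facts about $\mathcal O_{X,x}$, about vertical specializations, and about $\overline{\{x\}}$, and in (iv) the verification that $A_{\nu}\subseteq k(x)^{+}$ and that $y$ really is a center, where one must keep careful track of how the three valuations interact along the stalk maps.
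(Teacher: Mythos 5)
The paper itself gives no argument for this lemma; it simply cites \cite[Lemma~3.11.10]{Hu1993}. A self-contained proof would therefore be a genuinely different route, but yours has a gap at its core. In (ii) and (iii) you rely on the claim that a vertical specialization $y$ of $x$ induces an isomorphism $k(y)\cong k(x)$. This is unjustified and in general false: the stalks are colimits over different families of rational neighbourhoods (with completions intervening), and the paper itself only asserts that $k(y)\to k(x)$ has \emph{dense} image (Remark \ref{spec. field} i)); its Lemma \ref{val} ii), proved by a density-plus-clopenness argument, would be vacuous if the map were onto. With only density your construction in (ii) collapses: $A:=k(y)^{+}$ is a valuation ring of the subfield $k(y)$, not of $k(x)$ (its fraction field is $k(y)$), so $(x,A)$ is not even a valuation ring of $X$ in the sense of Definition \ref{special}. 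The actual content of (ii) and (iii) is to produce a valuation ring $A$ of $k(x)$ with $A\subseteq k(x)^{+}$ and $i^{-1}(A)=k(y)^{+}$, and to prove it unique; this needs either an extension-of-valuations argument through the residue field $k(x)^{\succ}$ (Remark \ref{spec. field} iii)) or exactly the density/openness argument of Lemma \ref{val} ii). Your uniqueness step in (iii) (``a valuation ring dominating another valuation ring of that field must equal it'') again presupposes $k(y)=k(x)$; without that, domination only gives $A_{\nu}\cap k(y)=k(y)^{+}$, and uniqueness genuinely uses the topology. The derivation of $A=k(x)$ from surjectivity of $\mathcal{O}_{X,y}\to k(x)$ has the same defect (the correct argument: $A$ is a clopen subgroup containing the dense subfield $k(y)$).

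Part (iv) has an additional structural error: you decompose an arbitrary specialization as horizontal-then-vertical, with the horizontal part given by a prime $\mathfrak{q}\subseteq k(x)^{+}$. In a general adic space this description fails. For instance in $\mathrm{Spa}(\mathbb{Z},\mathbb{Z})$ the trivial valuation $x$ with support $(0)$ specializes to the trivial valuation $y$ with support $(p)$; here $k(x)^{+}=k(x)=\mathbb{Q}$ has no nonzero prime, so your recipe produces nothing, while the lemma does hold with the valuated valuation ring $(x,\mathbb{Z}_{(p)},\mathbb{Z}_{(p)})$. The decomposition compatible with centers of valuated valuation rings is the opposite one, vertical-then-horizontal (compare Lemma \ref{criterion} iv) and the use of \cite[Lemma~1.1.15]{Hu1993} in Proposition \ref{homeo}): one should first find $h$ with $x\rightsquigarrow h$ vertical and $h\rightsquigarrow y$ (generalized) horizontal, take $A_{\nu}$ from (ii)/(iii) for $h$, and then take for $A$ the appropriate overring of $A_{\nu}$ recording the horizontal step, verifying the center condition at $y$. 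Your identifications $k(z)=\mathrm{Frac}(k(x)^{+}/\mathfrak{q})$ and $k(y)^{+}\subseteq k(x)^{+}/\mathfrak{q}$ also suffer from the same dense-versus-equal problem as above. Part (i) of your sketch is correct.
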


\begin{proof}
See \cite[Lemma~3.11.10]{Hu1993}
\end{proof}

The following lemma will be used frequently. 

\begin{lemma}\label{criterion}
		Let $X$ be an affinoid adic space and $A$ a valuated valuation ring of $X$ with support $x$.
	\begin{enumerate}[label=\roman*)]

		\item Then $A$ has a center on $X$ if and only if for the canonical map $i \colon \mathcal{O}_X(X) \rightarrow k(x)$ the inclusions
		$i(\mathcal{O}_X(X)) \subset A$ and $i(\mathcal{O}_X^+(X)) \subset A_{\nu}$ hold.
		
		\item Every valuated valuation ring on $X$ has at most one center on $X$.
		
		\item Let $z$ be a center of $(x,A,A_{\nu})$ and $z'$ a center of $(x,A',A'_{\nu})$ with $A \subset A'$ and $A_{\nu} \subset A'_{\nu}$, then $z$ is a specialization of $z'$.
		
		\item Let $(x,A,A_{\nu})$ be a valuated valuation ring with center $z$. Then $(x,A_{\nu})$ has a center $h$ and $z$ is a horizontal specialization of $h$.
		
	\end{enumerate}
\end{lemma}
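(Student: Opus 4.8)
The plan is to prove (i) first, extract from its proof an explicit description of the (unique) center as a concrete point of $X=\mathrm{Spa}(R,R^{+})$, and then deduce (ii)--(iv). Write $R=\mathcal{O}_X(X)$, $R^{+}=\mathcal{O}^{+}_X(X)$ and $i=i_x\colon R\to k(x)$ for the canonical map.

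For the forward implication in (i), if $A$ has a center $z$ then, $z$ being a specialization of $x$, the canonical map $\mathcal{O}_{X,z}\to k(x)$ factors $i$ as $R\to\mathcal{O}_{X,z}\to k(x)$; by Definition \ref{vvr}(vi) it lands in $A$ and restricts to a homomorphism of valuated local rings $(\mathcal{O}_{X,z},\nu_z)\to(A,\nu)$, so chasing $R$ through $\mathcal{O}_{X,z}$, and $R^{+}$ through $(\mathcal{O}_{X,z})_{\nu_z}$, gives $i(R)\subseteq A$ and $i(R^{+})\subseteq A_{\nu}$. For the converse I would construct the center by hand. Since $A_{\nu}$ is a valuation ring of $k(x)$, let $\hat\nu$ be the valuation on $k(x)$ with valuation ring $A_{\nu}$; as $A_{\nu}\subseteq k(x)^{+}$ it refines $v_x$, and writing $v_x=\hat\nu\bmod\Delta$ for the convex subgroup $\Delta$ of the value group of $\hat\nu$ corresponding to $k(x)^{+}$, I would deduce continuity of $\hat\nu\circ i$ on $R$ from continuity of $v_x=v_x\circ i$, verifying openness of $\{\hat\nu\circ i<\gamma\}$ according to the image of $\gamma$ in the value group $\Gamma_{v_x}$ of $v_x$. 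The valuation I actually want is $w:=\nu\circ i$, which agrees with $\hat\nu\circ i$ on $i^{-1}(A^{\times})$ and vanishes on $i^{-1}(\mathfrak m_A)$; since its value group is the convex subgroup $\hat\nu(A^{\times})$ one checks $\{w<\gamma\}=\{\hat\nu\circ i<\gamma\}$ for every $\gamma$ in this value group, so $w$ is continuous and defines a point $z\in X$ with $\mathrm{supp}(z)=i^{-1}(\mathfrak m_A)\supseteq\mathrm{supp}(x)$. Unravelling the construction then shows $z$ is a specialization of $x$, that $\mathcal{O}_{X,z}\to k(x)$ lands in $A$ and is local, and that $(\mathcal{O}_{X,z})_{\nu_z}\to A_{\nu}$ is local, i.e.\ that $z$ is a center; alternatively this point, prescribed as it is by its valuation datum, may be produced via Lemma \ref{center}. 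I expect the main obstacle to be precisely this step --- the valuation-group bookkeeping needed to guarantee that $w$ is continuous and realises a genuine specialization of $x$, that is, to re-prove in the affinoid setting the piece of Huber's theory that in the Noetherian case supports Lemma \ref{center}.

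For (ii) I would show that for an arbitrary center $z$ of $(x,A,A_{\nu})$ and an arbitrary rational subset $U=X(\tfrac{f_1,\dots,f_n}{g})$ one has
\[
 z\in U \iff i(g)\in A^{\times}\ \text{and}\ i(f_j)/i(g)\in A_{\nu}\ \text{for }1\le j\le n .
\]
If $z\in U$, then $g$ is a unit in $\mathcal{O}_X(U)$ and each $f_j/g\in\mathcal{O}^{+}_X(U)$, so pushing them through $\mathcal{O}_X(U)\to\mathcal{O}_{X,z}\xrightarrow{\ \psi\ }A$ --- a homomorphism that is local and carries $(\mathcal{O}_{X,z})_{\nu_z}$ into $A_{\nu}$ --- yields the right-hand side. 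Conversely, given the right-hand side: $g$ maps to a unit of $A$, hence (local homomorphisms reflect units) to a unit of $\mathcal{O}_{X,z}$, so $v_z(g)\neq 0$; and if $c\in\mathcal{O}_{X,z}$ is the germ of $f_j/g$ and one had $\nu_z(c)>1$, then --- $\nu_z$ being supported at $\mathfrak m_z$ --- $c$ would be a unit of $\mathcal{O}_{X,z}$ with $\nu_z(c^{-1})<1$, so $c^{-1}$ would lie in the maximal ideal of $(\mathcal{O}_{X,z})_{\nu_z}$, whence $\psi(c^{-1})\in\mathfrak m_{A_{\nu}}$ by locality and $\psi(c)=i(f_j)/i(g)\notin A_{\nu}$ because $A_{\nu}$ is a valuation ring of $k(x)$ --- contradicting the hypothesis; so $\nu_z(c)\le 1$, i.e.\ $v_z(f_j)\le v_z(g)$, and $z\in U$. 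As the right-hand side does not involve $z$, all centers of $(x,A,A_{\nu})$ lie in the same rational subsets, hence, rational subsets forming a basis of the topology, have the same open neighbourhoods; since adic spaces are $T_0$ there is at most one.

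Part (iii) is then immediate from (ii): for every rational $U$, $z\in U$ forces $i(g)\in A^{\times}$ and $i(f_j)/i(g)\in A_{\nu}$, hence $i(g)\in (A')^{\times}$ (as $A\subseteq A'$ gives $A^{\times}\subseteq (A')^{\times}$) and $i(f_j)/i(g)\in A'_{\nu}$, hence $z'\in U$; so every open neighbourhood of $z$ contains $z'$, i.e.\ $z$ is a specialization of $z'$. For (iv): from the center $z$ of $(x,A,A_{\nu})$, part (i) gives $i(R^{+})\subseteq A_{\nu}$, and part (i) applied to the valuated valuation ring $(x,k(x),A_{\nu})$ (for which $i(R)\subseteq k(x)$ is automatic) produces a center $h$ of the valuation ring $(x,A_{\nu})$, identified with the point of valuation $\hat\nu\circ i$. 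Applying (iii) to the inclusion $(x,A,A_{\nu})\subseteq(x,k(x),A_{\nu})$ shows $z$ is a specialization of $h$; and $h$, being the center of a valuation ring, is by Lemma \ref{center}(i) a vertical specialization of $x$, so $\mathrm{supp}(h)=\mathrm{supp}(x)$, while $\mathrm{supp}(z)=i^{-1}(\mathfrak m_A)\supseteq\mathrm{supp}(x)$ and the valuation $w=\nu\circ i$ of $z$ is obtained from that of $h$ by collapsing $i^{-1}(\mathfrak m_A)$ into the support and passing to the value group $\hat\nu(A^{\times})$ --- exactly the recipe for a horizontal specialization. Hence $z$ is a horizontal specialization of $h$, as claimed.
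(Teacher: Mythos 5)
Your parts (ii)--(iv) are correct but follow a genuinely different route from the paper: the paper simply quotes Huber (\cite[Lemma~3.11.9]{Hu1993}) for (i) and (ii), proves (iii) by the purely topological argument ``an affinoid neighbourhood of $z$ missing $z'$ would give a second center of $(x,A',A'_\nu)$, contradicting uniqueness'', and proves (iv) by showing $\mathcal{O}^+_{X,z}\to\mathcal{O}^+_{X,h}$ is local (via the composite into $A_\nu$) and invoking \cite[Corollary~3.11.7]{Hu1993}. Your rational-subset membership criterion ($z\in X(\tfrac{f_1,\dots,f_n}{g})$ iff $i(g)\in A^\times$ and $i(f_j)/i(g)\in A_\nu$) is a correct, self-contained replacement: it gives (ii) without citing Huber, makes (iii) immediate, and your identification $v_z=\nu\circ i$, $v_h=\hat\nu\circ i$ together with $i(R)\subseteq A$ does exhibit $v_z$ as the restriction of $v_h$ to the convex subgroup $\hat\nu(A^\times)\supseteq c\Gamma_{v_h}$, i.e.\ as a horizontal specialization --- an explicit alternative to the paper's locality argument. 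What your approach buys is transparency about \emph{which} point the center is; what it costs is that you must re-derive the existence statement the paper imports as a black box.

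And that is where the genuine gap sits: the converse direction of (i) is not actually proved. Your bookkeeping identity $\{w<\gamma\}=\{\hat\nu\circ i<\gamma\}$ for $\gamma\in\hat\nu(A^\times)$ is correct, and the missing fact is true --- $\hat\nu\circ i$ is a vertical specialization of the continuous valuation $v_x$ in $\mathrm{Spv}(R)$, and continuity passes to vertical specializations (because $\mathfrak m_{k(x)^+}\subseteq\mathfrak m_{A_\nu}$ preserves Huber's criterion ``$v\in\mathrm{Spv}(R,I)$ and $v(a)<1$ for $a\in I$''), after which $w\le 1$ on $R^+$ and the center conditions for $z=[w]$ follow by unwinding --- but you explicitly defer exactly this step (``the main obstacle''), so existence of the center, and with it the input to your (iv), rests on an unproved sketch. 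Note also that your proposed fallback, producing the point ``via Lemma~\ref{criterion}''-adjacent machinery, i.e.\ Lemma~\ref{center}, does not work: that lemma manufactures a (valuated) valuation ring from a \emph{given} specialization, not a center from the ring-theoretic criterion, so it cannot substitute for the continuity argument. Either carry out the continuity/center verification in full, or do what the paper does and cite \cite[Lemma~3.11.9]{Hu1993} for (i) and (ii), keeping your arguments for (iii) and (iv).
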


\begin{proof}
For i) and ii) see \cite[Lemma~3.11.9]{Hu1993}.

To prove iii) assume $z$ is not a specialization of $z'$, then there exists an open affinoid neighbourhood $U$ of $z$ not containing $z'$. By i) $(x,A',A'_{\nu})$ has a center in $U$ unequal to $z'$, because $(x,A,A_{\nu})$ has a center in $U$. But this is a contradiction to ii).

Finally for iv) we assumed $(x,A,A_{\nu})$ has a center on $X$. We get by i) that $(x,A_{\nu})$ has a center $h$ on $X$. Since the valuation ring $(x,A_{\nu})$ is the same as a valuated valuation ring $(x,k(x), A_{\nu})$, we get by iii) that $z$ is a specialization of $h$. By \cite[Corollary ~ 3.11.7]{Hu1993} we have to show that $\mathcal{O}^+_{X,z} \rightarrow \mathcal{O}^+_{X,h}$ is local, then $z$ is a horizontal specialization of $h$. We observe that the composition
\begin{center}
	\begin{tikzcd}
		\mathcal{O}^+_{X,z} \arrow[r]  & \mathcal{O}^+_{X,h} \arrow[r] & A_{\nu}
	\end{tikzcd}
\end{center}
is a local ring homomorphism. Hence the first arrow is a local ring homomorphism.

\end{proof}

\begin{lemma} \label{val}
	Let $X$ be affinoid adic space. The following statements hold:
	\begin{enumerate}[label=\roman*)]
		\item If $(x,B), (x,B')$ are valuation rings of $X$ with centers $z,z'$ on $X$ and $B \subset B'$, then $z$ is a specialization (already vertical) of $z'$.
		\item If $z \in X$ is a center of $(x,B)$ and $A \subset k(x)$ valuation ring with $k(z) \cap B \subset A$, then $B \subset A.$
		\item If $z \in X$ is a center of $(x,A), (x',A')$ and $x$ is a vertical specialization of $x'$, then $A' \cap k(x) = A$.  
	\end{enumerate}
\end{lemma}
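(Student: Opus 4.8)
For (i) I would regard $(x,B)$ and $(x,B')$ as the valuated valuation rings $(x,k(x),B)$ and $(x,k(x),B')$ of $X$; since their field parts both equal $k(x)$ and $B\subseteq B'$, Lemma \ref{criterion} iii) applies verbatim and yields that $z$ is a specialization of $z'$. It then only remains to promote this to a \emph{vertical} specialization. By Lemma \ref{center} i) both $z$ and $z'$ are vertical specializations of $x$, hence have the same support as $x$; invoking the standard factorization of a specialization into a horizontal part (detected on supports) followed by a vertical one (\cite[\S 3.11]{Hu1993}), the specialization $z'\rightsquigarrow z$ has trivial horizontal part, i.e.\ is vertical.

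For (iii) the key input is that $z$ is simultaneously a center of $(x,A)$ and of $(x',A')$. Since $x$ is a vertical specialization of $x'$ there is a canonical inclusion $j\colon k(x)\hookrightarrow k(x')$, and the canonical maps $i\colon k(z)\to k(x)$, $i'\colon k(z)\to k(x')$ attached to the specializations $z\rightsquigarrow x$ and $z\rightsquigarrow x'$ are compatible, $i'=j\circ i$. Substituting this into the defining relation $k(z)^+=(i')^{-1}(A')$ gives $k(z)^+=i^{-1}\bigl(A'\cap k(x)\bigr)$, and together with $z$ being a specialization of $x$ (plus a short check that $A'\cap k(x)\subseteq k(x)^+$) this says precisely that $z$ is a center of $(x,A'\cap k(x))$. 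Since $z$ is also a center of $(x,A)$, the uniqueness of the valuation ring of $X$ with support $x$ and center $z$ — a consequence of Lemma \ref{center} iii) applied to the associated valuated valuation rings — forces $A'\cap k(x)=A$.

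Part (ii) is the analogous local statement: one has $k(z)\cap B=i^{-1}(B)=k(z)^+$, so the hypothesis reads $k(z)^+\subseteq A$; combining it with $i(\mathcal{O}_X^+(X))\subseteq k(z)\cap B$ (which follows from Lemma \ref{criterion} i) together with the fact that the image of $\mathcal{O}_X(X)$ in $k(x)$ lies in $k(z)$), I would extract a valuation ring $C\subseteq A$ of $k(x)$ that again realizes $z$ as a center, and then conclude $C=B$ by uniqueness, whence $B\subseteq A$.

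The main obstacle, in both (ii) and (iii), is that $A$, $A'\cap k(x)$, $B$ and $k(x)^+$ need not be pairwise comparable, so the required inclusions cannot simply be read off a single chain of valuation rings; they have to be produced from the center criterion Lemma \ref{criterion} i) and the uniqueness statement. I expect (ii) — pinning down the auxiliary valuation ring $C$ and verifying it is a center-realizing valuation ring for $z$ — to be the step requiring the most genuine valuation-theoretic work; parts (i) and (iii) are then essentially formal once the compatibility $i'=j\circ i$ and the ``common support $\Rightarrow$ vertical'' principle are in hand.
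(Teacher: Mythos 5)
Your parts i) and iii) are essentially the paper's own argument: for i) the paper likewise combines Lemma \ref{criterion} iii) with Lemma \ref{center} i) (both centers are vertical specializations of $x$, hence have the same support, so the specialization $z'\rightsquigarrow z$ is vertical), and for iii) it likewise observes that $A'\cap k(x)$ is a valuation ring of $k(x)$ (by \cite[1.1.14 e)]{Hu1996}), that $z$ is a center of $(x,A'\cap k(x))$ via the compatibility $i'=j\circ i$, and concludes $A'\cap k(x)=A$ from the uniqueness of the valuation ring with support $x$ and center $z$; the ``short check'' $A'\cap k(x)\subseteq k(x)^+$ is glossed over in the paper as well, so no complaint there. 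The uniqueness you invoke can indeed be sourced from Lemma \ref{center} iii), so i) and iii) are in order.

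The genuine gap is in ii). You propose to ``extract a valuation ring $C\subseteq A$ of $k(x)$ that again realizes $z$ as a center'' and then conclude $C=B$ by uniqueness; but no construction of $C$ is given, and, given uniqueness, the only possible such $C$ is $B$ itself, so the existence of $C$ is literally equivalent to the desired conclusion $B\subseteq A$ — the plan begs the question. Moreover ii) allows $A$ to be an arbitrary valuation ring of $k(x)$, not necessarily contained in $k(x)^+$ and not necessarily admitting a center, so no purely formal argument from Lemma \ref{criterion} i) plus uniqueness can work: for an abstract subfield $k(z)\subseteq k(x)$ the implication ``$k(z)\cap B\subseteq A\Rightarrow B\subseteq A$'' is simply false (take $k(z)$ the prime field with $B\cap k(z)$ tiny). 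What makes the statement true is topological, and this is exactly how the paper proves it: by Remark \ref{spec. field} i) the image of $k(z)$ in $k(x)$ is dense, the relevant valuation rings are open and closed (Remark \ref{spec. field} iv)), and for a dense subset $Y$ and an open subset $U$ one has $\overline{Y\cap U}=\overline{U}$, whence $B\subseteq\overline{B}=\overline{k(z)\cap B}\subseteq\overline{A}=A$. This density/clopenness argument is the actual content of ii) and is entirely absent from your sketch; without it the step you flagged as ``the most genuine valuation-theoretic work'' cannot be completed.
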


\begin{proof}

For i) we have that $z$ is a specialization of $z'$ by Lemma \ref{criterion} iii). Since $z$ and $z'$ are centers of valuation rings with support $x$, they are vertical specializations of $x$ by Lemma \ref{center} i). Hence, they have the same support as $x$ and so $z$ is a vertical specialization of $z'$. 
\\
Let us prove ii). We recall the following general topological statement: If $Y \subset X$ is a dense subset and $U \subset X$ is an open subset, then $\overline{Y \cap U} = \overline{U}$. Together with Remark \ref{spec. field} i), which says that $k(z)$ is dense in $k(x)$, and since valuation rings are open and closed subsets, this implies ii).
\\
We assume the situation from iii). By \cite[1.1.14 e)]{Hu1996} $A' \cap k(x)$ is a valuation ring of $k(x)$. Since $z$ is a center of $(x',A')$, this implies $z$ is also a center of $(x, A' \cap k(x))$ by Lemma \ref{center}. Also i) of the same lemma implies that there is a unique valuation ring with support $x$ which has $z$ as center. Hence, $A' \cap k(x) = A.$

\end{proof}

In the following, we show that valuated valuation rings of $X$ can be mapped to a valuated valuation ring of $Y$.

\begin{proposition}\label{cut}
	Every morphism of adic spaces $f \colon X \rightarrow Y$ sends a valuated valuation ring $(x,A,A_{\nu})$ of $X$ to the valuated valuation ring $(f(x), A \cap k(f(x)), A_{\nu} \cap k(f(x))$. Furthermore, if $p$ is a center of $(x,A,A_{\nu})$ then $f(p)$ is a center of \\ $(f(x), A \cap k(f(x)), A_{\nu} \cap k(f(x))$.
\end{proposition}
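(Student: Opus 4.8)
The plan is to establish the two assertions in turn, deducing the statement about centers from the first by chasing a diagram of stalks.

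Write $y=f(x)$ and let $i\colon k(y)\to k(x)$ be the homomorphism of residue fields induced by the local homomorphism $\mathcal{O}_{Y,y}\to\mathcal{O}_{X,x}$ coming from $f$. As a homomorphism of fields $i$ is injective, so I identify $k(y)$ with a subfield of $k(x)$ and read $A\cap k(f(x))$ as $i^{-1}(A)$. For the first assertion I would check the conditions of Definition \ref{vvr}(iv) one by one. By the remark following Definition \ref{vvr}, $A_\nu$ is a valuation ring of $k(x)$, and $\text{Quot}(A)=k(x)$; hence \cite[1.1.14 e)]{Hu1996} gives that $A\cap k(y)$ and $A_\nu\cap k(y)$ are valuation rings of $k(y)$, in particular $\text{Quot}(A\cap k(y))=k(y)$. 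I would then restrict the valuation $\nu$ of $A$ to $A\cap k(y)$: its support is $\mathfrak{m}_A\cap k(y)$, which for the valuation ring $A\cap k(y)$ is exactly the maximal ideal (an element of $A\cap k(y)$ lies in $\mathfrak{m}_A$ iff its inverse is not in $A$ iff it is a non-unit of $A\cap k(y)$). Thus $(A\cap k(y),A_\nu\cap k(y))$ is a valuated local ring, hence a valuated valuation ring, and $(A\cap k(y))_\nu=A_\nu\cap k(y)$. Finally, since a morphism of adic spaces is compatible with the valuations on stalks one has $k(y)^+=i^{-1}(k(x)^+)$, so $A_\nu\cap k(y)\subset k(x)^+\cap k(y)=k(y)^+$. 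This shows $(f(x),A\cap k(f(x)),A_\nu\cap k(f(x)))$ is a valuated valuation ring of $Y$ with support $f(x)$.

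For the second assertion let $p$ be a center of $(x,A,A_\nu)$ and put $q=f(p)$. Since $p\in\overline{\{x\}}$ and $f$ is continuous, $q\in\overline{\{y\}}$, so $q$ is a specialization of $y$ and the canonical map $\mathcal{O}_{Y,q}\to k(y)$ is defined. Combining the stalk map $\mathcal{O}_{Y,q}\to\mathcal{O}_{X,p}$ induced by $f$ with the generalization maps to $\mathcal{O}_{Y,y},\mathcal{O}_{X,x}$ and the residue maps, one gets a commutative square
\begin{center}
\begin{tikzcd}
\mathcal{O}_{Y,q} \arrow[r]\arrow[d] & \mathcal{O}_{X,p}\arrow[d]\\
k(y)\arrow[r,hook] & k(x)
\end{tikzcd}
\end{center}
whose bottom edge is $i$. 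Because $p$ is a center, $\mathcal{O}_{X,p}\to k(x)$ factors through $A$ and the induced map $\mathcal{O}_{X,p}\to A$ is a local homomorphism of the valuated local rings $(\mathcal{O}_{X,p},\nu_p)$ and $(A,\nu)$. Chasing the square, $\mathcal{O}_{Y,q}\to k(y)$ factors through $A\cap k(y)$, and the composite $\mathcal{O}_{Y,q}\to A\cap k(y)\hookrightarrow A$ coincides with $\mathcal{O}_{Y,q}\to\mathcal{O}_{X,p}\to A$, a composition of local homomorphisms of valuated local rings (recall that $f$ induces such a homomorphism on stalks), hence is itself one; in particular it carries $(\mathcal{O}_{Y,q})_{\nu_q}$ into $A_\nu$, and since its image lies in $k(y)$, into $A_\nu\cap k(y)$. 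It remains to descend this along $A\cap k(y)\hookrightarrow A$: as in the first part, $A\cap k(y)\hookrightarrow A$ and $A_\nu\cap k(y)\hookrightarrow A_\nu$ are local homomorphisms, so applying the cancellation principle ``if $g\circ h$ and $g$ are local homomorphisms, then so is $h$'' twice shows that $\mathcal{O}_{Y,q}\to A\cap k(y)$ is local and that the induced map $(\mathcal{O}_{Y,q})_{\nu_q}\to A_\nu\cap k(y)$ is local. Together this says precisely that $\mathcal{O}_{Y,q}\to A\cap k(y)$ is a local homomorphism of valuated local rings, i.e.\ that $q=f(p)$ is a center of $(f(x),A\cap k(f(x)),A_\nu\cap k(f(x)))$.

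The only ingredient that is not a formal manipulation is the compatibility of a morphism of adic spaces with the valuations on stalks — that $\mathcal{O}_{Y,f(x')}\to\mathcal{O}_{X,x'}$ is a homomorphism of valuated local rings and that such maps commute with generalization, whence also $k(f(x'))^+=i^{-1}(k(x')^+)$ — which I would take from Huber's construction of adic spaces in \cite{Hu1993}. The remaining point requiring a little care is the verification that the valuation restricted to $A\cap k(y)$ still has its maximal ideal as support; beyond that the argument is the same valuation-ring bookkeeping already used in the proof of Lemma \ref{val}, and I do not expect any genuine difficulty.
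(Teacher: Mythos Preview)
Your proof is correct and follows the same approach as the paper. The paper's own proof is essentially a one-line ``it follows by definition'' together with the observation that the composite $\mathcal{O}_{Y,f(p)}\to A\cap k(f(x))$ is a homomorphism of valuated local rings; you have simply unpacked the verifications in full, including the cancellation argument for locality and the check that the restricted valuation has the correct support.
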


\begin{proof}
It follows by definition that $(f(x), A \cap k(f(x)), A_{\nu} \cap k(f(x)))$ is a valuated valuation ring on $Y$ and the induced ring homomorphism $\mathcal{O}_{Y,f(p)} \rightarrow A\cap k(f(x))$ is a morphism of the valuated local rings $(\mathcal{O}_{Y,f(p)}, \nu_{f(p)})$ and $(A \cap k(f(x)), \nu \circ \iota)$, where $\iota \colon k(f(x)) \rightarrow k(x)$ is the canonical morphism.

\end{proof}

\begin{remark} In the presence of a morphism $f \colon X \rightarrow Y$ of adic spaces,
	we call $y \in Y$ a center of a valuated valuation ring $(x,A, A_{\nu})$ of $X$ if $y$ is a center of $(f(x), A \cap k(f(x)), A_{\nu} \cap k(f(x)))$ and also refer to the last valuated valuation ring as valuated valuation ring $(x,A)$ on $Y$ if there is no ambiguity about which map is used. 
\end{remark}

\begin{definition}
	Let $X$ be an adic space and $x \in X$. Then we call the residue field of $\mathcal{O}^+_{X,x}$ the $specialization \ field$ $k(x)^{\succ}$ of $x$.
\end{definition}

\begin{remark}\label{spec. field}
	\begin{enumerate}[label=\roman*)]
		
		\item If $x$ is a vertical specialization of $x'$ in an adic space , then the image of $k(x) \rightarrow k(x')$ is dense. This follows by \cite[Proposition ~ 12.7 (1)]{Iv25} which says, that the completions of the valued field of a point $x$ and the residue field of $x$ are isomorphic. Since the valued fields of $x$ and $x'$ are the same in our case, this shows our remark. 
		
		\item 	If $x$ is a horizontal specialization of $x'$ in an adic space $X$, then by \cite[Corollary ~ 3.11.7 i)]{Hu1993} the canonical morphism $\mathcal{O}^+_{X,x} \rightarrow \mathcal{O}^+_{X,x'}$ is local. Hence we get a morphism between the specialization fields $k(x)^{\succ} \rightarrow k(x')^{\succ}.$ Note that the residue fields of $\mathcal{O}^+_{X,x}$ and $k(x)^+$ are canonically isomorphic, see \cite[Proposition ~ 12.7 (2)]{Iv25}
		
		\item By \cite[Remark ~ 2.5]{Wed19} we have a bijection between the valuation rings in $k(x)^{\succ}$ and the valuation rings of $k(x)$ which are contained in $k(x)^+$.
		
		\item If $x$ is a non-analytic point in an adic space, then the residue field $k(x)$ carries the discrete topology. Hence, every valuation ring of $k(x)$ is clopen. On the other hand, if $x$ is an analytic point, then $k(x)$ is endowed with a topology induced by a rank 1 valuation. In particular, by \cite[Proposition 5.45]{Wed19}, for every non-trivial valuation on $k(x)$, its valuation ring, which either contains or is contained in $k(x)^+$, is clopen.

	\end{enumerate}
\end{remark}

The following proposition and definition are not found in the literature. Although they are clear, we will state them for the sake of completeness.

\begin{proposition}
	Let $U \coloneq$ $\mathrm{Spa}(A,A^+)$ be an affinoid space and $x,y \in \mathrm{Spa}(A,A^+)$ with $x$ a specialization of $y$. Consider the canonical morphism $g \colon \mathcal{O}_{U,x} \rightarrow \mathcal{O}_{U,y}$. Then $x$ is a generalized horizontal specialization of $y$ if and only if $v_x$ is a generalized horizontal specialization of $\mathrm{Spv}(g)(v_y)$.
\end{proposition}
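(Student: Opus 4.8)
The plan is to reduce both sides of the equivalence to one and the same elementary condition relating the two valuations $v_x$ and $w:=\mathrm{Spv}(g)(v_y)$ on the ring $R:=\mathcal{O}_{U,x}$ --- here $w$ is by construction the valuation $c\mapsto v_y(g(c))$ --- namely the condition that $v_x(c)<1$ implies $w(c)<1$ for every $c\in R$. Before doing so I would record two things. First, since $x$ is a specialization of $y$, every $c\in R=\colim_{x\in V}\mathcal{O}_U(V)$ already lies in a rational neighbourhood $V$ of $x$, and such a $V$ then contains $y$ as well; running the definition of the valuation spectrum of $\mathrm{Spa}(\mathcal{O}_U(V),\mathcal{O}^+_U(V))$ shows that $v_x$ is likewise a specialization of $w$ in $\mathrm{Spv}(R)$, so that the right-hand side of the proposition is a meaningful assertion. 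Second, for any point $p$ of an affinoid adic space, $\mathcal{O}^+_{U,p}=\{c\in\mathcal{O}_{U,p}:v_p(c)\le 1\}$ is a local ring with maximal ideal $\{c:v_p(c)<1\}$, because an element of value $1$ becomes invertible, with inverse again of value $1$, on a suitable rational neighbourhood of $p$.

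For the left-hand side, recall from \cite[Corollary~3.11.7]{Hu1993} (cf.\ Remark \ref{spec. field} ii)) that $x$ is a generalized horizontal specialization of $y$ precisely when the homomorphism $\mathcal{O}^+_{U,x}\to\mathcal{O}^+_{U,y}$ induced by $g$ is local, i.e.\ when the preimage of the maximal ideal equals the maximal ideal. Using the description of these maximal ideals just mentioned, the inclusion ``$\supseteq$'' holds unconditionally --- if $v_x(c)=1$ then $c$ is a unit of $\mathcal{O}^+_{U,x}$, hence $g(c)$ is a unit of $\mathcal{O}^+_{U,y}$ and $v_y(g(c))=1$ --- while ``$\subseteq$'' is exactly the implication ``$v_x(c)<1\Rightarrow w(c)<1$''. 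Thus the left-hand side is equivalent to that implication.

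For the right-hand side I would appeal to the structure theory of specializations in a valuation spectrum (primary versus secondary) as in \cite[\S3]{Hu1993}: the specialization $w\rightsquigarrow v_x$ in $\mathrm{Spv}(R)$ factors, in a fixed order, into a horizontal (primary) step and a vertical (secondary) step. A purely horizontal step passes from a valuation ring to its quotient by a prime ideal and so does not alter which elements of $R$ have value $<1$; hence it satisfies the implication. A nontrivial vertical step, by contrast, produces an element of $R$ whose $v_x$-value lies in the relevant convex subgroup --- so it is $<1$, while its $w$-value is $1$ --- and thus violates the implication. Consequently the implication holds exactly when the vertical part of $w\rightsquigarrow v_x$ is trivial, that is, exactly when $v_x$ is a generalized horizontal specialization of $w$. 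Combining this with the previous paragraph proves the proposition.

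The step I expect to be the genuine obstacle is the last one. One has to make sure the primary/secondary factorization is compatible with the chosen normalization of the valuations, and --- for the implication ``(the vertical part is nontrivial) $\Rightarrow$ (the condition fails)'' --- that a nontrivial vertical step is witnessed by an honest element of $R$, and not merely by a ratio of two elements of $R$; this last point is exactly where one uses that $v_x$ and $w$ are distinct \emph{as points of $\mathrm{Spv}(R)$}. Everything else is bookkeeping once the local-ring description of $\mathcal{O}^+$ from the first paragraph is available.
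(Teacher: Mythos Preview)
Your approach is genuinely different from the paper's. The paper argues by cases: it declares the ordinary horizontal case ``well known'' and then separately verifies the remaining case in the definition of \emph{generalized} horizontal specialization (trivial $v_x$, trivial characteristic subgroup), pushing everything through the commutative triangles $A\to\mathcal{O}_{U,x}\to\mathcal{O}_{U,y}$. You instead try to reduce both sides to the single criterion ``$v_x(c)<1\Rightarrow w(c)<1$''. This is a clean idea, and the reduction on the right-hand side can be made to work: the point you flag at the end is exactly right, and the reason a nontrivial vertical step is witnessed by an element of $R$ (not just of the residue field) is that $R=\mathcal{O}_{U,x}$ is local with residue field $k(x)$, so $R\twoheadrightarrow k(x)$ is surjective---this is what you should say, rather than appealing to $v_x\neq w$ in $\mathrm{Spv}(R)$.

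The gap is on the left-hand side. You invoke \cite[Corollary~3.11.7]{Hu1993} (cf.\ Remark~\ref{spec. field}~ii)) as giving ``$x$ is a \emph{generalized} horizontal specialization of $y$ $\iff$ $\mathcal{O}^+_{U,x}\to\mathcal{O}^+_{U,y}$ is local''. But in this paper that reference is only used for \emph{horizontal} specialization (see the proof of Lemma~\ref{criterion}~iv) and Remark~\ref{spec. field}~ii)); the ``generalized'' notion has an extra case, namely $x$ trivial with $c\Gamma_y$ trivial and $\mathrm{supp}(y|_{c\Gamma_y})\subset\mathrm{supp}(x)$, and that case is precisely what the paper's proof spends its effort on. Your reduction therefore proves ``($x$ horizontal specialization of $y$) $\iff$ criterion'', not the generalized statement, unless you separately check that the locality criterion also captures the trivial-valuation case on the $\mathrm{Spa}$ side. (Concretely: when $x$ is trivial one has $\mathcal{O}^+_{U,x}=\mathcal{O}_{U,x}$, and you would need to show that locality of $\mathcal{O}_{U,x}\to\mathcal{O}^+_{U,y}$ is equivalent to the support condition defining the generalized case.) Filling this in would make your uniform argument complete; as written, the edge case is missing on both sides.
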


\begin{proof}

Both directions are  well known if we assume $x$ is a horizontal specialization of $y$. So it is left to show the other case in the definition of generalized horizontal specialization. First we note that if a trivial valuation is a horizontal specialization of some valuation then the characteristic group of this valuation is trivial. We assume first that the characteristic group $c\Gamma$ of  $\mathrm{Spv}(g)(v_y)$ is trivial and $v_x$ is a trivial valuation with $\mathrm{supp(Spv}(g)(v_y)_{| c\Gamma}) \subset \mathrm{supp}(v_x$). Because the following diagram is commutative
\begin{center}
	\begin{tikzcd}
		\mathcal{O}_{U,x} \arrow[rr, "g"] & & \mathcal{O}_{U,y} \\
		& A, \arrow[ul, "\phi"]  \arrow[ur, "\psi", swap] & \\
	\end{tikzcd}
\end{center}  

we get that the characteristic group $c\Gamma_y$ of $y$ is also trivial. Also by the commutativity of the diagram it shows supp$(y_{|c\Gamma_y}) \subset$ supp$(x)$.

Now we assume $x$ is a trivial valuation and the characteristic group $c\Gamma_y$ is trivial of $y$ with supp$(y_{|{c\Gamma_y}})  \subset $ supp$(x)$. Since $x$ is a trivial valuation, we find that $v_x$ is one too. The same holds for $v_{y_{|{c\Gamma_y}}}$. We consider again a commutative diagram:

\begin{center}
	\begin{tikzcd}
		
		\mathcal{O}_{U,x} \arrow[r, "f"] \arrow[rr, "g", bend left] & \mathcal{O}_{U,y_{|c\Gamma_y}} \arrow[r, "h"] & \mathcal{O}_{U,y}.  
		
	\end{tikzcd}
\end{center}

We know that $(y_{|{c\Gamma_y}})$ is a horizontal specialization of $y$. Hence, $v_{(y_{|{c\Gamma_y}})}$ is a horizontal specialization of Spv$(h)(v_y)$. It follows that Spv$(f)(v_{(y_{|{c\Gamma_y}})})$ is a trivial valuation and a horizontal specialization of Spv$(g)(v_y)$. This implies that the characterstic group of Spv$(g)(v_y)$ is trivial. It is clear that every support of a valuation on $\mathcal{O}_{U,x}$ is contained in the support of $v_x.$ 

\end{proof}
As for the horizontal specializations and vertical specializations, the last proposition justifies the following definition.

\begin{definition}
	Let $X$ be an adic space and $x,y \in X$. We call $x$ a $generalized$  $horizontal$  $specialization$ of $y$ if $v_x$ is a generalized horizontal specialization of Spv$(g)(v_y)$, where $g \colon \mathcal{O}_{X,x} \rightarrow \mathcal{O}_{X,y}$ is the canonical morphism.
\end{definition}

\section{Universal vertical compactification} \label{3}

In this section we will recall some definitions and statements, define the notion of a vertically partially proper morphism in Definition \ref{proposition} and in particular, in Definition \ref{comp} we define the notion of a universal vertical compactification. We will, in Theorem \ref{baby}, prove an affinoid version of the universal vertical compactification and Theorem \ref{main} will show a generalized version. At first sight, it appeared that the original proof of \cite[Theorem~5.1.5]{Hu1993}, which we aim to generalize, relied only on the property that every specialization between two points is already a vertical specialization in an analytic adic space. However, more is needed to show the openness of a certain subset in Theorem \ref{main}, which is why we define square complete adic spaces in Definition \ref{weaker} and along with it, we require Lemma \ref{horizontal}. Also we aim to show that it is enough to consider stable adic spaces. For this the Corollary \ref{iso} will be needed.

\subsection{Background and main definition}

\begin{definition}
	Let $f \colon X \rightarrow Y$ be a morphism of adic spaces. 
	\begin{enumerate}[label=\roman*)]
		
		\item $f$ is called $adic$ if for every $x$ there exist open affinoid subspace $U,V$ of $X,Y$ such that $x \in U$, $f(U) \subset V$ and $\mathcal{O}_Y(V) \rightarrow \mathcal{O}_X(U)$ is adic.\footnote{Then for every pair $U,V$ of open affinoid subspaces of $X,Y$ with $f(U) \subset V, \mathcal{O}_Y(V) \rightarrow \mathcal{O}_X(U)$ is adic.}
		
		\item $f$ is called \emph{locally of weakly finite type} if, for every $x \in X$, there exist open affinoid subspaces $U,V$ of $X,Y$ such that $x \in U, f(U) \subset V$ and the ring homomorphism of Huber rings $\mathcal{O}_Y(V) \rightarrow \mathcal{O}_X(U)$ is of topologically finite type.
		
		\item $f$ is called \emph{locally of $^+$weakly finite  type} if, for every $x \in X$, there exist open affinoid subspaces $U,V$ of $X,Y$ and a finite subset $E$ of $\mathcal{O}_X(U)$ with $x \in U, f(U) \subset V$, the ring homomorphism of Huber rings  $\mathcal{O}_Y(V) \rightarrow \mathcal{O}_X(U)$ is of topologically finite type and $\mathcal{O}_X^+(U)$ is the smallest ring of integral elements of $\mathcal{O}_X(U)$ which contains $E$ and the image of $\mathcal{O}_Y^+(V)$ in $\mathcal{O}_X(U)$, that means $\mathcal{O}_X^+(U)$ is the integral closure of $\mathcal{O}_Y^+(V)[E \cup \mathcal{O}_X(U)^{\circ \circ}]$ in $\mathcal{O}_X(U)$.
		
		\item $f$ is called \emph{locally of finite type} if, for every $x \in X$, there exist open affinoid subspaces $U,V$ of $X,Y$ such that $x \in U, f(U) \subset V$ and the ring homomorphism of Huber pairs $(\mathcal{O}_Y(V)), \mathcal{O}_Y^+(V)) \rightarrow (\mathcal{O}_X(U), \mathcal{O}_X^+(U))$ is of topologically finite type.
	\end{enumerate}
\end{definition}

In general the corresponding morphism $\mathrm{Spa} B \rightarrow \mathrm{Spa} A$ to a ring homomorphism $A \rightarrow B$ between Huber pairs is not spectral. It is the case if $A \rightarrow B$ is an adic morphism of Huber rings. We recall some statements which will be used. 

\begin{proposition}
	\begin{enumerate}[label=\roman*)]
		\item In the definition above we have that $iv) \implies iii) \implies ii) \implies i)$.
		
		\item Let $A$ be a Huber ring and $v$ a valuation on $A$. Then the induced morphism $A \rightarrow K(v)$ to the valued field of $v$ is adic.
		
		\item Let $f \colon X \rightarrow Y$ be an open embedding of adic spaces. Then $f$ is adic.
		
		\item Let $f \colon X \rightarrow Y$ be an adic quasi-compact morphism of adic spaces. Then $f(X)$ is pro-constructible.
		
		\item Let $f \colon X \rightarrow Y$ be an adic morphism of adic spaces. We assume that a valuation ring $(x,A)$ has a center on $Y$. Then the induced morphism $\mathrm{Spa}(k(x),A) \rightarrow Y$ is adic.
		
		\item Let $X$ be an adic space and $x$ a point of $X$. Then $\mathrm{Spa}(k(x), k(x)^+)$ is a stable adic space.
	\end{enumerate}
\end{proposition}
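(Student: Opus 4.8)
The plan is to dispatch the six assertions largely by unwinding definitions and invoking the relevant results of Huber; only (i) and (v) call for an actual argument, and the only point that needs attention throughout is to cite each statement in a form valid for arbitrary Huber rings, so that the Noetherian hypotheses in \cite{Hu1993} and \cite{Hu1996} are not silently used.

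For (i) I would chase definitions. For $iv)\Rightarrow iii)$: if the morphism of Huber pairs $(\mathcal{O}_Y(V),\mathcal{O}_Y^+(V))\to(\mathcal{O}_X(U),\mathcal{O}_X^+(U))$ is of topologically finite type, choose a presentation $\mathcal{O}_Y(V)\langle T_1,\dots,T_n\rangle\twoheadrightarrow\mathcal{O}_X(U)$ and let $E$ be the finite set of images of the $T_i$; by the definition of topological finite type for Huber pairs, $\mathcal{O}_X^+(U)$ is then the integral closure of $\mathcal{O}_Y^+(V)[E\cup\mathcal{O}_X(U)^{\circ\circ}]$ in $\mathcal{O}_X(U)$, which is precisely iii). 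The implication $iii)\Rightarrow ii)$ is immediate once the condition on $\mathcal{O}_X^+(U)$ is dropped, and $ii)\Rightarrow i)$ follows because $\mathcal{O}_Y(V)\to\mathcal{O}_Y(V)\langle T_1,\dots,T_n\rangle$ is adic and adicity passes to quotients; cf.\ \cite[1.2]{Hu1996}.

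Items (ii), (iii), (iv), (vi) I would take from the literature. For (ii), the valued field $K(v)$ carries by construction a topology for which a ring of definition of $A$ maps into a ring of definition of $K(v)$ generating its topology, so $A\to K(v)$ is adic; this is part of the construction of $K(v)$ in \cite{Hu1993}. For (iii), adicity is local on source and target, so one reduces to the inclusion of a rational subset into an affinoid adic space, and the associated ring map is a rational localization, hence adic \cite{Hu1993}. For (iv), an adic quasi-compact morphism is, locally on the target, a spectral map of spectral spaces (the underlying space of an affinoid adic space is spectral, and $A\to B$ adic makes $\mathrm{Spa}(B,B^+)\to\mathrm{Spa}(A,A^+)$ spectral), and the image of a spectral map of spectral spaces is pro-constructible. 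For (vi), $k(x)$ is a valued field and valued fields are stable Huber rings, so $\mathrm{Spa}(k(x),k(x)^+)$ is a stable adic space \cite{Wed19}; this is exactly the instance of stability invoked later to form the fiber products of a weakly-finite-type morphism with itself.

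Finally, for (v): the center $y$ of $(x,A)$ on $Y$ provides, by the construction in the $\Leftarrow$ direction of Lemma~\ref{criterion} i), a morphism $g\colon\mathrm{Spa}(k(x),A)\to Y$ sending the closed point to $y$, and I would check adicity of $g$ locally on $Y$. For a point $w$ of the source with $g(w)$ a specialization of $f(x)$ (e.g.\ $w$ near the closed point), any open affinoid $V\subseteq Y$ containing $g(w)$ already contains $f(x)$, by the elementary fact that an open set meeting $\overline{\{f(x)\}}$ contains $f(x)$; since $f$ is adic there is then an open affinoid $U\subseteq X$ with $x\in U$, $f(U)\subseteq V$ and $\mathcal{O}_Y(V)\to\mathcal{O}_X(U)$ adic, and composing with the adic map $\mathcal{O}_X(U)\to k(x)=K(v_x)$ of (ii) shows that $\mathcal{O}_Y(V)\to\mathcal{O}(W)$ is adic for the relevant rational neighbourhood $W$ of $w$ in $\mathrm{Spa}(k(x),A)$. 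The step I expect to cost the most care — and the only genuine obstacle in the whole proposition — is exactly this organisation of charts in (v): arranging the affinoid charts on $\mathrm{Spa}(k(x),A)$ and on $Y$ compatibly along $g$, and in particular handling the points of $\mathrm{Spa}(k(x),A)$ that map to generalizations rather than specializations of $f(x)$; items (i)--(iv) and (vi) are essentially bookkeeping plus references.
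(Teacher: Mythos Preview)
Your treatment of (i)--(iv) and (vi) is fine and matches the paper, which simply declares (i)--(iv) well known and for (vi) splits into the non-analytic case ($k(x)$ discrete, hence trivially stably sheafy) and the analytic case ($k(x)$ carries a rank-$1$ valuation topology, hence is strongly Noetherian); your appeal to ``valued fields are stable'' is correct but less specific.

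For (v) you have overcomplicated the argument, and the ``genuine obstacle'' you flag is not an obstacle. Every point of $\mathrm{Spa}(k(x),A)$ is a generalization of the closed point, whose image in $Y$ is the center $y$; hence the entire image of $g$ lies in any open neighbourhood of $y$. Since $y$ is a (vertical) specialization of $f(x)$, any affinoid $V\subset Y$ containing $y$ already contains $f(x)$ as well. So there is no case distinction to make: one affinoid chart $V$ around $y$ receives all of $\mathrm{Spa}(k(x),A)$, and the induced ring map $\mathcal{O}_Y(V)\to k(x)$ is the very same Huber ring map that underlies $\mathrm{Spa}(k(x),k(x)^+)\to Y$. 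The paper exploits exactly this: by (ii) the map $\mathrm{Spa}(k(x),k(x)^+)\to X$ is adic, composing with the adic $f$ gives an adic map to $Y$, hence on a suitable affinoid chart the ring map $\mathcal{O}_Y(V)\to k(x)$ is adic; as adicity of a morphism of Huber pairs depends only on the underlying Huber ring map, this already shows $\mathrm{Spa}(k(x),A)\to Y$ is adic. Your pointwise chart-chasing reaches the same conclusion, but the worry about points mapping to strict generalizations of $f(x)$ is moot once you notice that a single affinoid around $y$ catches everything.
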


\begin{proof}

The first four statements are well known and not the focus of this work. Nevertheless, we briefly justify the last two statements:
By ii) we know that $\mathrm{Spa}(k(x),k(x)^+) \rightarrow X$ is an adic morphism. Since $f$ is adic we get the resulting composition $\mathrm{Spa}(k(x),k(x)^+) \rightarrow Y$ is adic. Hence there is some affinoid neighbourhood $\mathrm{Spa}(A,A^+)$ in $Y$ such that $\hat{A} \rightarrow \widehat{k(x)}$ is an adic morphism. This shows v). \\
For vi) we distinguish the following cases: If $x$ is a non-analytic point then $k(x)$ has the discrete topology and thus it is stably sheafy. In the other case if $x$ is an analytic point then $k(x)$ has a topology induced by a height $1$ valuation.  Hence, by \cite[Remark ~ 6.37 (2)]{Wed19} $k(x)$ is strongly Noetherian and this implies also being stably sheafy.

\end{proof}

We recall that for a morphism $X \rightarrow S$ locally of weakly finite type and an adic morphism $Y \rightarrow S$, where $Y$ is a stable adic space, the fiber product $X \times_S Y$ exists in the category of adic spaces, see \cite[Theorem ~ 8.56 (b)]{Wed19}

\begin{definition} \label{proposition}  Let $f \colon X \rightarrow Y$ be a morphism of adic spaces, where $X$ is a stable adic space.
	\begin{enumerate}[label=\roman*)] 
		\item We call $f$ $vertically \ specializing \ at \ a \ point  \ x \in X$ if for every vertical specialization $y'$ of $f(x)$ there exists a specialization $x'$ of $x$ with $y' = f(x')$.  We further call $f$ $universally \ vertically \ specializing \ at \ a \ point \ x \in X$ if $f$ is locally of weakly finite type and for every adic morphism of adic spaces $Y' \rightarrow Y$ with $Y'$ a stable adic space and every point $x'$ of $X \times_Y Y'$ lying over $x$ the projection $X \times_Y Y' \rightarrow Y'$ is vertically specializing at $x'$. The morphism $f$ is called $vertically \ specializing$ respectively $universally \ vertically \ specializing$ if it is so at every point in $X$.
		\item We call $f$ \emph{vertically separated} if $f$ is quasi-separated, locally of weakly finite type and for any valuation ring $(x,A)$ with centers $u,v$ and $f(u)=f(v)$ it follows that $v=u$. 
		
		\item A morphism of adic spaces $f \colon X \rightarrow Y$, where $X$ is a stable adic space, is called \emph{separated} if $f$ is locally of weakly finite type and the image of the diagonal morphism $\Delta \colon X \rightarrow X \times_Y X$ is closed.
		
		\item We call $f$ \emph{vertically partially proper} if $f$ is locally of $^+$weakly finite type, separated and universally vertically specializing. 
	\end{enumerate}
\end{definition}

We show that for a morphism universally vertically specializing at $x$ one can lift vertical specializations to vertical specializations.

\begin{proposition}
	
	Let $f$ be universally vertically specializing at $x$. Then for every vertical specialization $y$ of $f(x)$, there exists a vertical specialization $z$ of $x$ with $y = f(z)$.  
	
\end{proposition}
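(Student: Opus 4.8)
The plan is to exploit the word ``universally'' by a suitable adic base change of $Y$. We may assume $y \neq s := f(x)$, since otherwise $z = x$ works. Because $y$ is a vertical specialization of $s$, Lemma \ref{center} ii)--iii) applied to $Y$ provides a unique valuation ring $(s,B)$ with support $s$ whose center on $Y$ is $y$, and one has $B \subseteq k(s)^+$. I would set $Y' := \mathrm{Spa}(k(s),B)$. Picking an open affinoid $V \subseteq Y$ with $y \in V$ — hence $s \in V$, as open subsets are stable under generization — one gets a morphism $Y' \to V \hookrightarrow Y$, and it is adic because $\mathcal{O}_Y(V) \to k(s)$ is adic by part ii) of the proposition recalled above; moreover $Y'$ is stable since $k(s)$ is stably sheafy. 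The point $\eta \in Y'$ given by $v_s$ has valuation ring $k(s)^+$ and lies over $s$, and its vertical specialization $\eta_B \in Y'$ — the closed point of $Y'$, with valuation ring $B$ — lies over $y$, because $y$ is the center of $(s,B)$.

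Since $f$ is locally of weakly finite type and $Y' \to Y$ is adic with $Y'$ stable, the fibre product $W := X \times_Y Y'$ exists; write $h \colon W \to X$ and $g \colon W \to Y'$ for the projections. The homomorphism $k(s) \to k(x)$ carries $B$ into $k(x)^+$, because $B \subseteq k(s)^+ = k(x)^+ \cap k(s)$; it therefore defines a morphism $\mathrm{Spa}(k(x),k(x)^+) \to Y'$ over $Y$, which combined with the canonical morphism $\mathrm{Spa}(k(x),k(x)^+) \to X$ yields $\mathrm{Spa}(k(x),k(x)^+) \to W$. Letting $\xi \in W$ be the image of the canonical point, one has $h(\xi)=x$ and $g(\xi)=\eta$. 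As $f$ is universally vertically specializing at $x$ and $\xi$ lies over $x$, the projection $g$ is vertically specializing at $\xi$; applied to the vertical specialization $\eta_B$ of $g(\xi)=\eta$, this gives a specialization $\xi'$ of $\xi$ with $g(\xi')=\eta_B$, and then $z := h(\xi')$ is a specialization of $x$ with $f(z)$ equal to the image of $\eta_B$ in $Y$, namely $y$.

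The hard part will be to see that $z$ may be chosen to be a \emph{vertical} specialization of $x$; this is exactly where being \emph{universally} (not merely) vertically specializing is used, since a morphism may well send a horizontal specialization to a vertical one, so the conclusion does not follow from ``$f$ vertically specializing at $x$'' alone. The idea is to arrange that $\xi'$ itself is a vertical specialization of $\xi$: one extends the valuation ring $B \subseteq k(\eta)=k(s)$ along $k(s)\hookrightarrow k(\xi)$ to a valuation ring $\mathcal{A}$ of $k(\xi)$ with $\mathcal{A}\subseteq k(\xi)^+$ and $\mathcal{A}\cap k(s)=B$, and then shows that $(\xi,\mathcal{A})$ admits a center $\xi'$ on $W$. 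Granting this, $\xi'$ is a vertical specialization of $\xi$ by Lemma \ref{center} i); its image $g(\xi')$ is the center of $(\eta,\mathcal{A}\cap k(s))=(\eta,B)$ by Proposition \ref{cut}, hence $g(\xi')=\eta_B$ by the uniqueness in Lemma \ref{criterion} ii); and finally $z=h(\xi')$ is a vertical specialization of $x$, because every morphism of adic spaces carries vertical specializations to vertical (or trivial) ones, the support of a valuation being contravariantly functorial. The whole difficulty is thus concentrated in showing that $(\xi,\mathcal{A})$ has a center on $W$, which I would prove by combining the center criterion Lemma \ref{criterion} i) with the hypothesis that $g$ is vertically specializing at $\xi$ — the one step that genuinely requires the universal assumption.
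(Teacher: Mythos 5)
Your setup (base change along an adic morphism from the adic spectrum of an affinoid field, lift $x$ to a point $\xi$ of the fibre product, apply the vertical-specializing property of the projection) is the right kind of move, but there is a genuine gap exactly at the step you yourself flag as "the hard part". Showing that the prescribed valuation ring $(\xi,\mathcal{A})$ has a center on $W$ is \emph{not} a routine combination of Lemma \ref{criterion} i) with "$g$ is vertically specializing at $\xi$": Lemma \ref{criterion} i) only applies on an affinoid space, and you have no affinoid open in hand that you know contains the would-be center; more importantly, "vertically specializing at $\xi$" only produces \emph{some} specialization $\xi'$ of $\xi$ over a given vertical specialization of $g(\xi)$ — it gives you no control over which valuation ring (if any) $\xi'$ is a center of. The statement you need is precisely the lifting property for valuation rings, i.e.\ the implication $a)\Rightarrow b)$ of \cite[Proposition~1.3.8]{Hu1996}, which is essentially equivalent to the proposition being proved; its proof requires a \emph{further} base change (along $\mathrm{Spa}(k(\xi),\mathcal{A})\rightarrow Y'$, or in the paper's version directly along $\mathrm{Spa}(k(x),A)\rightarrow Y$ with $A\subset k(x)^+$ a valuation ring of the \emph{upstairs} residue field) together with an argument identifying the resulting point as a center. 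So as written your proof reduces the problem to an unproven instance of itself.

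For comparison, the paper avoids this by making the key choice differently: it bases change over $\mathrm{Spa}(k(x),A)$, where $A$ is a valuation ring of $k(x)$ (not of $k(f(x))$, as in your $Y'=\mathrm{Spa}(k(s),B)$) whose image has center $y$ on $Y$. Because the closed point of $\mathrm{Spa}(k(x),A)$ has residue field $\widehat{k(x)}$, one can chase residue-field maps to see that the canonical map $\mathcal{O}_{X,z}\rightarrow k(x)$ is local, whence $\mathcal{O}_{X,z}\rightarrow\mathcal{O}_{X,x}$ is local and $z$ is a \emph{vertical} specialization of $x$ by \cite[Corollary~3.11.7]{Hu1993}; no center-lifting statement on the fibre product is needed. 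In other words, your extension $\mathcal{A}$ of $B$ to $k(x)$ (resp.\ $k(\xi)$) should be built \emph{before} the base change and used as the base itself; with your choice of base $\mathrm{Spa}(k(s),B)$ the verticality of $z$ over $x$ does not come for free, and the argument you sketch to recover it does not close. (A workable alternative with your base would be to replace $\xi'$ by a point $h'$ that is a vertical specialization of $\xi$ and a horizontal generalization of $\xi'$, using \cite[Lemma~1.1.15]{Hu1993}, and to observe that $Y'$, being the spectrum of an affinoid field, has no proper horizontal specializations, so $g(h')=\eta_B$ — but that is a different argument from the one you propose.)
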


\begin{proof}
The proof is essentially based on the implication $a) \implies b)$ of \cite[Proposition ~ 1.3.8]{Hu1996} from which we extract only the necessary information.
Let $A$ be a valuation ring of $x$ such that $y$ is a center of $(x,A)$ on $Y$. We consider the cartesian diagram

\begin{center}
	\begin{tikzcd}
		X' \arrow[r, "g'"] \arrow[d, "f'"] & X \arrow[d, "f"] \\
		\mathrm{Spa}(k(x),A) \arrow[r, "g"] & Y.\\
	\end{tikzcd}
\end{center}

Let $y' \in \mathrm{Spa}(k(x),A)$ be the closed point and $x''$ a vertical generalizations of $y'$ with $g(x'') = f(x).$ Such a point $x''$ exists because $g \colon \mathrm{Spa}(k(x,A)) \rightarrow Y$ is adic and by \cite[Proposition 3.8.9]{Hu1993} the map $g$ is surjective between all vertical generalizations of $y'$ and $g(y') = y$.  Then by \cite[Lemma ~ 8.58]{Wed19} there exists a point $x' \in X'$ with $g'(x') = x$ and $f'(x')= x''$. Since $f'$ is vertically specializing at the point $x'$ we get a point $z' \in X'$ such that $z'$ is a specialization of $x'$ with $f'(z') = y'$. In particular we get that $z \coloneqq g'(z')$ is a specialization of $x$. We show that $z$ is even a vertical specialization of $x$. Since $f(z) = y$ this will finish the proof. Note that we have $\widehat{k(x)} = k(y')$. Hence, we get the following commutative diagram 
\begin{center}
	\begin{tikzcd}
		\widehat{k(x)} \arrow[rrrr, "f'^{\flat}_{z'}"] & & & & k(z') \\
		& k(x) \arrow[ul] & & k(z) \arrow[ur, "g'^{\flat}_{z'}"] & \\
		& & \mathcal{O}_{X,z}, \arrow[ur] \arrow[ul] & & \\
	\end{tikzcd}
\end{center}
where $\mathcal{O}_{X,z} \rightarrow k(x)$ exists due to $z$ being a specialization of $x$. Since all right diagonal arrows are local and hence the composition of the left diagonal arrows with the horizontal arrow is local we get that $\mathcal{O}_{X,z} \rightarrow k(x)$ is local. This implies $\mathcal{O}_{X,z} \rightarrow \mathcal{O}_{X,x}$ is local and by \cite[Corollary ~ 3.11.7 ii)]{Hu1996} this means $z$ is a vertical specialization of $x$.

\end{proof}

\begin{remark}
	
	Assume $f \colon X \rightarrow S$ is locally of weakly finite type and $S$ a stable adic space. Hence $X$ is a stable adic space, which in turn ensures the existence of the fiber product of $f$ with itself. Then
	\begin{enumerate}[label=\roman*)]
		\item $f$ being universally vertically specializing at $x$ is equivalent to having the lifting property with respect to valuation rings $(x,A)$ on $X$ by \cite[Proposition ~ 1.3.8]{Hu1996}. That means if a valuation ring $(x,A)$ of $X$ has a center $y'$ on $S$, then there exists a center $y$ of $(x,A)$ on $X$ with $f(y) = y'$. Note the proof of this proposition works exactly the same without assuming that the giving adic spaces are analytic. 
		
		\item $f$ being separated implies $f$ is vertically separated by \cite[Proposition ~ 1.3.8]{Hu1996}, where the proof for $a) \implies b)$ works exactly the same without any assumption of our adic spaces being analytic.
		
		\item there is an equivalent criterion for quasi-separated and locally of weakly finite type $f$ being separated in \cite[Proposition ~ 3.11.12]{Hu1993} by using valuated valuation rings, where the image under $f$ of two centers of the same valuated valuation ring is equal then the centers were already equal. Note that although in this proposition $f$ is stated as locally of finite type, this is not  necessary and we only need $f$ to be locally of weakly finite type for the proof.
	\end{enumerate}
	
	We will always explicitly mention vertically separated when the full strength of separatedness is not required.
	
\end{remark}

Now we give a modified version of Huber's definition \cite[Definition ~ 5.1.1]{Hu1996} of compactifications for morphisms between adic spaces.

\begin{definition}\label{comp}
	Let $f \colon X \rightarrow S$ be a morphism between adic spaces where $S$ is a stable adic space.
	\begin{enumerate}[label=\roman*)]
		\item A $vertical \ compactification$ of $f$ is a commutative triangle of adic spaces
		\begin{center}
			\begin{tikzcd}
				X \arrow[r, hook, "j"] \arrow[d, "f", swap]  & X' \arrow[dl, "f'"] \\
				S & \\
			\end{tikzcd}
		\end{center}
		where $j$ is a locally closed embedding and $f'$ is vertically partially proper.
		\item A $universal \ vertical \ compactification$ of $f$ is a vertical compactification $(X',f',j)$ of $f$ such that if
		\begin{center}
			\begin{tikzcd}
				X \arrow[dd, "f"] \arrow[dr, "h"] & \\
				& Y  \arrow[dl, "g"] \\
				S & \\
				
			\end{tikzcd}
		\end{center}
		is a commutative triangle of adic spaces with $g$ vertically partially proper, then there exists a unique morphism $i: X' \rightarrow Y$ such that the diagram
		\begin{center}
			\begin{tikzcd}
				X \arrow[dd, "f", swap] \arrow[drr, "h"] \arrow[dr, "j", swap] & & \\
				& X' \arrow[r, "i"] \arrow[dl, "f'", swap] & Y \arrow[dll, "g"] \\
				S & & \\
			\end{tikzcd}
		\end{center}
		commutes.
	\end{enumerate}
\end{definition}

\subsection{Affinoid case}

Already in \cite[Definition ~ 3.13.5]{Hu1993} the notion of the relative closure $\overline{U}^S$ for affinoid analytic spaces $U \rightarrow S$ was defined. We will generalize this notion for arbitrary affinoid adic spaces  and prove some properties for Theorem \ref{main} and Theorem \ref{spa} later.
 In particular Corollary \ref{iso} is needed, because we don't assume that our adic spaces are locally Noetherian. This Corollary will be used in Lemma \ref{mod} i). Theorem \ref{baby} will show for which morphism $U \rightarrow S$ of affinoid adic space a universal vertical compactification exists.

\begin{lemma}\label{gen.hor.}
	We consider a homomorphism of Huber pairs of the form $(A,A_1) \rightarrow (A,A_2)$. Then the image under the induced map $\phi \colon \mathrm{Spa}(A,A_2) \rightarrow  \mathrm{Spa}(A,A_1)$ is closed under generalized horizontal specialization. 
\end{lemma}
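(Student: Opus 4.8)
The approach is to reduce the statement to a purely valuation-theoretic triviality, using the description of generalized horizontal specialization recalled just above. The first step is to identify the image of $\phi$. A homomorphism of Huber pairs $(A,A_1)\to(A,A_2)$ is the identity on $A$ together with the inclusion $A_1\subseteq A_2$, and since whether two valuations are equivalent does not depend on the chosen ring of integral elements, $\phi$ is injective with image
\[
Z\;\coloneqq\;\{\, v\in\mathrm{Spa}(A,A_1):v(A_2)\leq 1 \,\}\;=\;\bigcap_{f\in A_2}\{\, v\in\mathrm{Spa}(A,A_1):v(f)\leq 1 \,\},
\]
where $v(A_2)\leq 1$ is shorthand for $v(f)\leq 1$ for all $f\in A_2$. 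So the claim becomes that $Z$ is stable under generalized horizontal specialization inside $\mathrm{Spa}(A,A_1)$; being an intersection, it suffices to prove this for each single set $\{v:v(f)\leq 1\}$ with $f\in A_2$.

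The second step is to recall, from the Proposition and Definition preceding this lemma, the two possible shapes of a generalized horizontal specialization $w$ of a valuation $v$: either $w=v_{|H}$ for a convex subgroup $c\Gamma_v\subseteq H\subseteq\Gamma_v$, in which case $w(f)=v(f)$ whenever $v(f)\in H$ and $w(f)=0$ otherwise, so that $w(f)\in\{v(f),0\}$ for every $f$; or $c\Gamma_v$ is trivial and $w$ is itself a trivial valuation (with support containing $\mathrm{supp}(v_{|c\Gamma_v})$), so that $w$ takes only the values $0$ and $1$. I would also invoke the standard fact that a generalized horizontal specialization of a continuous valuation is again continuous, so that $w$ belongs to $\mathrm{Spa}(A,A_1)$ as soon as $w(A_1)\leq 1$.

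With this in place the conclusion is immediate. Let $v\in Z$ and let $w$ be a generalized horizontal specialization of $v$. In the first case, for every $f\in A_2$ we have $w(f)\in\{v(f),0\}$ and $v(f)\leq 1$, hence $w(f)\leq 1$; the same holds on $A_1$, so $w\in\mathrm{Spa}(A,A_1)$. In the second case $w$ is trivial, so $w(f)\leq 1$ for every $f\in A$, in particular on $A_1$ and on $A_2$. Either way $w\in Z$, which is what we wanted.

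I do not expect a genuine obstacle here: the only points that need care are the correct identification of the image of $\phi$ with $Z$ and keeping the two cases apart. In particular, the fact that the operation $v\mapsto v_{|H}$ may enlarge the support is harmless, since it only forces certain values down to $0$, and the degenerate case is automatic because a trivial valuation never exceeds $1$.
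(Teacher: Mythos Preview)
Your proof is correct and follows essentially the same approach as the paper: identify the image as $\{v\in\mathrm{Spa}(A,A_1):v(f)\leq 1\text{ for all }f\in A_2\}$, then treat the two cases of generalized horizontal specialization separately, observing that in the non-degenerate case $w(f)\in\{v(f),0\}$ and in the degenerate case $w$ is trivial. The paper's argument is more terse but identical in substance; your version is slightly more explicit in spelling out the identification of the image and in noting that continuity of $w$ is preserved.
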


\begin{proof}

Let $x_1 \in $ Spa$(A,A_1)$ which is a generalized horizontal specialization of a point $x_2 \in $ Spa$(A,A_2)$. If $x_1$ corresponds to a trivial valuation, then it is clear. If $x_1$ is a horizontal specialization of $x_2$, then $x_1 = x_2 |_H$ for a convex subgroup $H$ of the value group $\Gamma_{x_2}$ with $c\Gamma_{x_2} \subset H$. Hence $x_1(f) \leq 1$ for every $f \in A_2$.

\end{proof}

\begin{proposition}\label{relcl}
	
	Let $f \colon U \rightarrow S$ be a morphism locally of $^+$weakly finite type between affinoid spaces. Denote by $U_c =\mathrm{Spa}(\mathcal{O}_U(U),I(U))$ the closure of $U$ relative to $S$ (sometimes we denote it as $\overline{U}^S$), where $I(U)$ is the integral closure of $\mathcal{O}^+_S(S)[
	\mathcal{O}_U(U)^{\circ \circ}]$ in $\mathcal{O}_X(U)$ and $\phi \colon U \rightarrow U_c$ is the morphism induced by the identity $(\mathcal{O}_U(U), I(U)) \rightarrow (\mathcal{O}_U(U), \mathcal{O}^+_U(U))$. Then the following holds:

	\begin{enumerate}[label=\roman*)]
		\item $\phi \colon U \rightarrow U_c$ is an open immersion,
		\item every point of $U_c$ is a vertical specialization of a point of $\phi(U)$,
		\item $\mathcal{O}_{U_c} \rightarrow \phi_*\mathcal{O}_U$ is an isomorphism of sheaves of topological rings,		
		\item $\phi(U)$ is closed under generalized horizontal specialization.
	\end{enumerate}
	
	\begin{enumerate}[label=\roman*')]
		
		\item if  $\mathcal{O}_U(U)$ is topologically finite type over $\mathcal{O}_S(S)$ and there exists a finite subset $E$ of $\mathcal{O}_U(U)$ such that $\mathcal{O}^+_U(U)$ is the integral closure of $\mathcal{O}^+_S(S)[E \cup \mathcal{O}_U(U)^{\circ \circ}]$, then $\phi$ gives an isomorphism from $U$ onto a rational subset of $U_c$.
	\end{enumerate}
	
\end{proposition}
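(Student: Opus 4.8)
The plan is to prove the four statements i)--iv) together, then deduce i'). First I would unwind the construction: $U_c = \mathrm{Spa}(\mathcal O_U(U), I(U))$, where $\mathcal O_U(U)$ carries the same topology it has as the ring of global sections of the affinoid $U$, and $I(U)$ is a ring of integral elements with $I(U) \subset \mathcal O_U^+(U)$ (indeed $\mathcal O_U^+(U)$ is integral over $\mathcal O_S^+(S)[E \cup \mathcal O_U(U)^{\circ\circ}]$ and contains $I(U)$). Since the topology is unchanged, the identity on $\mathcal O_U(U)$ is an adic morphism of Huber rings, so $\phi\colon U = \mathrm{Spa}(\mathcal O_U(U),\mathcal O_U^+(U)) \to U_c$ is spectral, and it is the inclusion of the subspace of those valuations $v$ with $v(f) \le 1$ for all $f \in \mathcal O_U^+(U)$; this subspace is exactly $\{v \in U_c : v(f) \le 1 \ \forall f \in \mathcal O_U^+(U)\}$. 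For i), I would note this set is open in $U_c$: since $\mathcal O_U^+(U)$ is the integral closure of $I(U)[E \cup \mathcal O_U(U)^{\circ\circ}]$, and integral dependence does not change which valuations are $\le 1$, it suffices to see that imposing $v(e) \le 1$ for the finitely many $e \in E$ cuts out an open (rational-type) locus; combined with $v(f) < 1$ for topologically nilpotent $f$ being automatic on all of $U_c$ (these map into $I(U)$), one gets $\phi(U) = \{v \in U_c : v(e) \le 1,\ e \in E\}$, which is a finite intersection of sets $\{v(e) \le 1 = v(1)\}$, hence open, and the structure sheaf identification iii) is then the standard fact that such a subspace inclusion is an open immersion, giving iii) and i) simultaneously. (Compare \cite[Proposition~3.13.6]{Hu1993} in the analytic Noetherian case; the point is that none of this used Noetherianity.)

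For ii), let $v \in U_c$. The valuation $v$ on $\mathcal O_U(U)$ has a support prime $\mathfrak p = \mathrm{supp}(v)$; let $v'$ be the composite valuation obtained by pushing $v$ forward along $\mathcal O_U(U) \to k(v)$ and then replacing the valuation ring $v$-ring of $k(v)$ by the smallest valuation ring of $k(v)$ containing the image of $\mathcal O_U^+(U)$ — equivalently, the generization of $v$ inside the constructible fibre over $\mathfrak p$ that is maximal among those valuations whose ring contains $\mathcal O_U^+(U)$. Such a maximal generization $v'$ exists by an elementary application of Zorn's lemma to the totally ordered chain of valuation subrings of $k(v)$, and $v'(f) \le 1$ for all $f \in \mathcal O_U^+(U)$, so $v' \in \phi(U)$; moreover $v$ is a specialization of $v'$, and it is a \emph{vertical} specialization because $\mathrm{supp}(v') = \mathrm{supp}(v) = \mathfrak p$ (we only moved the valuation ring inside a fixed residue field). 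This is exactly statement ii). Statement iv) is then immediate from Lemma~\ref{gen.hor.}: the morphism $\phi$ is induced by a homomorphism of Huber pairs of the form $(\mathcal O_U(U), I(U)) \to (\mathcal O_U(U), \mathcal O_U^+(U))$ with the same underlying ring, so $\phi(U)$, being the image of $\mathrm{Spa}(\mathcal O_U(U),\mathcal O_U^+(U))$, is closed under generalized horizontal specialization.

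Finally, for i'), assume the stronger hypothesis that $\mathcal O_U(U)$ is topologically of finite type over $\mathcal O_S(S)$ \emph{and} $\mathcal O_U^+(U)$ is already the integral closure of $\mathcal O_S^+(S)[E \cup \mathcal O_U(U)^{\circ\circ}]$ for a \emph{finite} set $E \subset \mathcal O_U(U)$. Then $I(U) \subset \mathcal O_U^+(U)$ and, modulo integral elements, $\mathcal O_U^+(U)$ is generated over $I(U)$ by $E$, so the locus $\phi(U) = \{v \in U_c : v(e) \le 1,\ e \in E\}$ is literally the rational subset $U_c\left(\tfrac{E}{1}\right) := \{v \in U_c : v(e) \le v(1)\ \forall e \in E\}$ (using that $1$ is a unit, so the subsets $\{e_1,\dots,e_n,1\}$ generate the unit ideal trivially). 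By the already-established i)/iii), $\phi$ is an open immersion onto this set, hence an isomorphism of $U$ onto a rational subset of $U_c$. I expect the main obstacle to be the careful bookkeeping in ii) — specifically, checking that the maximal valuation subring of $k(v)$ containing $\mathcal O_U^+(U)$ both exists (Zorn, using that $\mathcal O_U^+(U)$ maps into a valuation ring at all, which uses $I(U) \subset \mathcal O_U^+(U)$ and that $v \in U_c$ already has $v(I(U)) \le 1$) and genuinely produces a \emph{vertical} rather than merely topological specialization; the openness in i) is routine once one identifies $\phi(U)$ with the rational-type locus cut out by the finite set $E$.
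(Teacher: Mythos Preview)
Your argument for i) has a genuine gap: you assume a global finite set $E$ with $\mathcal O_U^+(U)$ equal to the integral closure of $I(U)[E]$, but the hypothesis is only that $f$ is \emph{locally} of $^+$weakly finite type. This means that for each point of $U$ there is a small open affinoid on which such a finite $E$ exists, not that one exists globally on $U$. What you have written is essentially a proof of i') (and you correctly identify it as such later), not of i). The paper deals with the local hypothesis by a genuinely local argument: to show $\phi(R(\tfrac{T}{g}))$ is open, it passes to a smaller rational subset $R(\tfrac{H}{s}) \subset R(\tfrac{T}{g})$ around each point that witnesses the $^+$weakly finite type condition with some finite $E \subset A\langle \tfrac{H}{s}\rangle$, and identifies the image of $R(\tfrac{H}{s})$ in $U_c$ with $\{x \in R'(\tfrac{H}{s}) : x(e)\le 1 \text{ for all } e\in E\}$, an open subset of the rational subset $R'(\tfrac{H}{s}) \subset U_c$.

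Your approach to ii) is also incomplete. Before you can apply Zorn you need a nonempty starting set, i.e.\ at least one valuation ring of $k(v)$ containing both the valuation ring of $v$ and the image of $\mathcal O_U^+(U)$; your parenthetical ``which uses $I(U)\subset \mathcal O_U^+(U)$ and $v(I(U))\le 1$'' points the inclusion the wrong way and does not furnish existence. You also do not check continuity of the resulting $v'$. The paper's argument avoids both issues by a case split: if $v$ is analytic, take its rank~$1$ vertical generalization $y$, which satisfies $y(s)\le 1$ for all $s\in \mathcal O_U(U)^\circ \supset \mathcal O_U^+(U)$; if $v$ is non-analytic, take the trivial valuation with the same (open) support, which is automatically continuous and in $\phi(U)$. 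Your treatment of iv) via Lemma~\ref{gen.hor.} and of i') agrees with the paper.
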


\begin{proof}

i) It is enough to show that for every rational subset $R(\frac{T}{g})$ in $U$ the image $\phi(R(\frac{T}{g}))$ is open in $U_c$. Let Spa$(A,A^+)$ be an affinoid space which is isomorphic to $U$. We can assume that $A$ is complete. We choose rational subsets $R(\frac{H}{s})$ of $U$ around every point $x$ in $R(\frac{T}{g})$ which are contained in $R(\frac{T}{g})$  such that $R(\frac{H}{s})$ is a witness for $R(\frac{T}{s}) \rightarrow S$ being of $^+$weakly finite type. Let $E \subset A\langle\frac{H}{s}\rangle$ be the finite subset with $A \langle \frac{H}{s} \rangle^+$ is the integral closure of $\mathcal{O}^+_S(S)[E \cup A\langle \frac{H}{s} \rangle^{\circ \circ}]$ . It suffices to show that for the canonical morphism $\psi \colon R(\frac{H}{s}) \hookrightarrow U \rightarrow U_c$ the image is open in $U_c$. We show
\begin{align*}
	\psi(R(\frac{H}{s})) = \{x \in R'(\frac{H}{s}) \ | x(e) \leq 1 \ \text{for every $e\in E$}\}, 
\end{align*}
where $R'(\frac{H}{s})$ is a rational subset in $U_c$. This will show that $\psi(R(\frac{H}{s}))$ is open in $R'(\frac{H}{s})$. Hence, it is open in $U_c$. Because $R(\frac{H}{s}) \cong \mathrm{Spa}(A\langle \frac{H}{s} \rangle, A \langle \frac{H}{s} \rangle^+)$ and the ring of integral elements contains $E$, this explains the equality.

ii) We will distinguish between analytic and non-analytic points: If $x \in U_c$ is an analytic point then by \cite[Remark~7.42 (2)]{Wed19} there is a vertical generalization $y \in U_c$ of rank 1 on $\mathcal{O}_{U_c}(U_c) = \mathcal{O}_U(U)$. Then $y(s) \leq 1$ for every $s \in \mathcal{O}_U(U)^{\circ}$, see \cite[Proposition~7.41]{Wed19}. In particular the inequality holds for every element in $\mathcal{O}_U(U)^+$, hence $y \in \phi(U)$. Now let $x \in U_c$ be a non-analytic point. Then, again by \cite[Remark~7.42 (3)]{Wed19}, there exists a vertical generalization $y$ which is a trivial valuation. Since $x$ is a non-analytic point, the support of $y$ is open. It follows that $y$ is continuous and hence, a point of $\phi(U)$. 
\newline
iii) This is obvious by definition of $U_c$. 
\newline
iv) That is a consequence of Lemma \ref{gen.hor.}.
\bigskip
\newline
$i')$ By \cite[Remark~7.24]{Wed19} we have  $\phi(U)= \{ x \in U_c \ | \ x(f) \leq 1 \ \text{for all } f \in \mathcal{O}^+_U(U) \} = \{ x \in U_c \ | \ x(e) \leq 1 \text \ \text{for all} \ e \in E \}$.
\end{proof}

\begin{proposition}\label{inj}
	
	Let $X \rightarrow S$ be a morphism of adic spaces and $U,V$ open affinoid subspaces of $X$ with $U \subset V$. Then the morphism $U_c \rightarrow V_c$ induced by the restriction map is spectral and injective.
	
\end{proposition}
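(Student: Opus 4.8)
Write $A=\mathcal O_V(V)$, $B=\mathcal O_U(U)$ and let $\varphi\colon A\to B$ be the restriction map; let $\phi_U\colon U\to U_c$ and $\phi_V\colon V\to V_c$ be the open immersions of Proposition~\ref{relcl}. First I would check that $\varphi$ really does induce a morphism $g\colon U_c\to V_c$: being continuous, $\varphi$ maps topologically nilpotent elements to topologically nilpotent ones and is compatible with the structure maps from $\mathcal O_S^+(S)$, so $\varphi$ carries $\mathcal O_S^+(S)[A^{\circ\circ}]$ into $\mathcal O_S^+(S)[B^{\circ\circ}]$ and hence, after taking integral closures, $\varphi(I(V))\subseteq I(U)$; thus $\varphi$ is a morphism of Huber pairs $(A,I(V))\to(B,I(U))$. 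For \emph{spectrality} it then suffices to recall that an open embedding of adic spaces is adic, so $\varphi$ is an adic morphism of Huber rings, and that the morphism of adic spectra attached to a morphism of Huber pairs whose underlying ring map is adic is spectral.

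For \emph{injectivity} the plan is to reduce to the closure of a single maximal point. By Proposition~\ref{relcl}(ii), together with the standard fact that every point of an adic space has a unique maximal generization, each $x\in U_c$ has a maximal generization $\tilde x$ which is a maximal point of $U_c$ and lies in $\phi_U(U)$; indeed the maximal points of $U_c$ are exactly those of $\phi_U(U)\cong U$. Two points with a generization relation have the same maximal generization, so the fibres of $x\mapsto\tilde x$ partition $U_c$, the fibre over a maximal point $y$ being the closure $\overline{\{y\}}$ of $y$ in $U_c$; hence $U_c=\bigsqcup_y\overline{\{y\}}_{U_c}$ over the maximal points of $U$, and likewise for $V_c$. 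Since the square built from $U\hookrightarrow V$, $\phi_U$, $\phi_V$ and $g$ commutes, $g$ carries $\phi_U(U)$ into $\phi_V(V)$ via $U\hookrightarrow V$ — so injectively on maximal points — and $g(\overline{\{y\}}_{U_c})\subseteq\overline{\{\phi_V(y)\}}_{V_c}$ for each maximal $y\in U$. As distinct closures of maximal points of $V_c$ are disjoint, it is enough to prove that $g$ restricts to an injection on $\overline{\{y\}}_{U_c}$ for every maximal point $y$ of $U$.

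To do that, fix such a $y$, put $\mathfrak p=\mathrm{supp}(y)$ and $\mathfrak q=\varphi^{-1}(\mathfrak p)$, and use the isomorphism $\mathcal O_{U_c}\xrightarrow{\sim}\phi_{U,*}\mathcal O_U$ of Proposition~\ref{relcl}(iii), which identifies the local ring, residue field and valuation of $y$ in $U_c$ with those of the point $y\in U$ (and similarly for $\phi_V(y)$ in $V_c$). I would then describe $\overline{\{y\}}_{U_c}$ explicitly, distinguishing whether $y$ is analytic or not. In both cases the key observation is that, $y$ being of rank $\le 1$, every topologically nilpotent element of $\mathcal O_U(U)$ has value $<1$ at $y$ and at all its specializations, so in the description of the specializations of $y$ only the image of $\mathcal O_S^+(S)$ under the structure morphism $U_c\to S$ is relevant: when $y$ is analytic, $\overline{\{y\}}_{U_c}\cong\mathrm{Spa}(\widetilde k,J)$, where $\widetilde k$ is the residue field of the valuation ring of $y$ and $J$ is the integral closure in $\widetilde k$ of the image of $\mathcal O_S^+(S)$ — this depends only on $y$ and on $U\to S$, not on the ambient space; when $y$ is non-analytic, $\mathfrak p$ is open, $\mathcal O_U(U)/\mathfrak p$ is discrete, and $\overline{\{y\}}_{U_c}\cong\mathrm{Spa}(\mathcal O_U(U)/\mathfrak p,J')$ with $J'$ again coming only from $\mathcal O_S^+(S)$. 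In the analytic case the same field $\widetilde k$ and ring $J$ occur for $\phi_V(y)$ (compatibility of $U\to S$ and $V\to S$ along $U\hookrightarrow V$), so $g$ is the identity on $\overline{\{y\}}_{U_c}$; in the non-analytic case $g$ on $\overline{\{y\}}_{U_c}$ is the morphism of adic spectra induced by the ring map $\mathcal O_V(V)/\mathfrak q\to\mathcal O_U(U)/\mathfrak p$ obtained from $\varphi$, which is the restriction of the open embedding $U\hookrightarrow V$ to the (discrete, hence unproblematic) closed subspace cut out by $\mathfrak q$ and therefore induces an open embedding, in particular an injection. Either way $g$ is injective on $\overline{\{y\}}_{U_c}$, which finishes the argument.

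The hard part will be the last paragraph: working out the precise form of $\overline{\{y\}}_{U_c}$ at a maximal point $y$ and, in particular, verifying that its ring of integral elements is controlled only by $\mathcal O_S^+(S)$ (not by $\mathcal O_U^+(U)$ or $I(U)$), which is exactly the place where one uses that topologically nilpotent functions have value strictly below $1$ at the rank $\le 1$ point $y$, and then checking that the $U$- and $V$-descriptions are compatible with $g$. The spectrality statement and the reduction to closures of maximal points are routine.
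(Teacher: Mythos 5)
Your treatment of spectrality is fine and matches the paper: the open immersion $U\hookrightarrow V$ is adic, so the restriction map is an adic ring homomorphism carrying $I(V)$ into $I(U)$, and the induced map of adic spectra is spectral. The injectivity argument, however, has a genuine gap at the very first reduction. You invoke ``the standard fact that every point of an adic space has a unique maximal generization'' and, dually, that distinct closures of maximal points of $V_c$ are disjoint, in order to write $U_c=\bigsqcup_y\overline{\{y\}}_{U_c}$ and check injectivity fibrewise. Both facts are true for \emph{analytic} adic spaces (where all specializations are vertical and vertical generizations form a chain with a unique rank~$\le 1$ maximum), but they fail as soon as non-analytic points are present --- and the whole point of this paper, and of this proposition, is that $X$, $U$, $V$ are arbitrary adic spaces. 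Concretely, take $U=\mathrm{Spa}(A,A)$ with $A=k[x,y]/(xy)$ carrying the discrete topology: the trivial valuation supported at the maximal ideal $(x,y)$ has the two trivial valuations supported at the minimal primes $(x)$ and $(y)$ as incomparable maximal generizations, so it has no unique maximal generization, and the closures of these two maximal points are not disjoint. Hence the decomposition of $U_c$ (and of $V_c$) into disjoint closures of maximal points breaks down, and injectivity cannot be tested one closure at a time. Note also that Proposition~\ref{relcl}(ii) only provides \emph{vertical} generizations into $\phi_U(U)$, while specialization chains inside $U_c$ may mix vertical and horizontal steps, which is exactly where your reduction loses control.

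For comparison, the paper proves injectivity differently: it uses that every point of $U_c$ is a vertical specialization of a point of $U$, that $U\hookrightarrow V\to V_c$ maps vertical generizations of a point onto the vertical generizations of its image (Huber's Proposition~3.8.9), and then runs the argument of \cite[Lemma~1.3.14~ii)]{Hu1996}, whose engine is the uniqueness of centers of (valuated) valuation rings on affinoid spaces (cf.\ Lemma~\ref{criterion}~ii)), with ``specialization'' read as ``vertical specialization''. If you want to salvage your approach, you should replace ``maximal points and their closures'' by ``points of $U$ together with the valuation rings $(x,A)$ whose centers sweep out $U_c$'': two points of $U_c$ with the same image in $V_c$ are centers of valuation rings supported at points of $U$, and uniqueness of centers on the affinoid $V_c$ is what forces them to coincide. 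Your closing analysis of $\overline{\{y\}}_{U_c}$ (that only the image of $\mathcal{O}^+_S(S)$ matters at a rank~$\le 1$ or trivial point) is a reasonable computation in itself, but it cannot carry the proof without the disjoint-decomposition step, which is false in this generality.
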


\begin{proof}

Since $U \hookrightarrow V$ is an open embedding, this map is adic. Hence the restriction map is an adic morphism of Huber rings and this shows $U_c \rightarrow V_c$ is spectral. For the other statement note that by the previous proposition every point in $U_c$ is a vertical specialization of some point in $X$ and $j \colon U \hookrightarrow V \rightarrow V_c$ is just an inclusion and maps vertical generalizations of $x \in U$ surjectively to the vertical generalizations of $j(x)$ by \cite[Proposition ~ 3.8.9]{Hu1993}. Then the injectivity is proven in the proof of \cite[Lemma~ ~ 1.3.14 ii)]{Hu1996}. In this lemma the morphism $g$ will be our morphism $U_c \rightarrow V_c$, the morphism $f$ in this lemma is our $j$, hence $k(x) =k(j(x))$, and we just have to add the word vertical specialization at the right places. We remark that in this proof the lemma \cite[Lemma ~ 1.3.16]{Hu1996} is used for analytic spaces, but in \cite[Proposition ~ 3.11.23]{Hu1996} is a more general version. 

\end{proof}

\begin{lemma}\label{min}
	
	Let $j: X \rightarrow X'$ be an open quasi-compact embedding of adic spaces and suppose that $x' \in X'$ has a vertical generalization in $j(X)$. Then there exists a minimal vertical generalization of $x'$ in $j(X)$. 
	
\end{lemma}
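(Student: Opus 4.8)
The plan is to use the quasi-compactness of $j$ to reduce to a single rational subset, and then to identify the relevant vertical generalizations with an up-set in the chain of convex subgroups of a value group, where a least element is evident. First I would pick an open affinoid neighbourhood $V=\mathrm{Spa}(A,A^+)$ of $x'$ in $X'$. Since $V$ is open it contains every generalization of $x'$; in particular the set $C$ of vertical generalizations of $x'$ in $X'$ lies in $V$, and by the theory of valuation spectra (see \cite{Hu1993}) $C$ is totally ordered by specialization. As $j$ is an open and quasi-compact embedding, $V\cap j(X)$ is a quasi-compact open subset of the affinoid $V$, hence a finite union $R_1\cup\dots\cup R_n$ of rational subsets of $V$. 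Each $R_i$ is open, so $C\cap R_i$ is an up-set of $C$ (closed under passing to generalizations inside $C$); since up-sets of a totally ordered set are totally ordered by inclusion, the finite union $C\cap j(X)=\bigcup_i (C\cap R_i)$ equals the largest of them, say $C\cap R_{i_0}$. We have $C\cap j(X)\neq\varnothing$ by hypothesis; and if $x'\in j(X)$ then $x'$ itself is the minimal vertical generalization of $x'$ in $j(X)$, so we may assume $x'\notin R_{i_0}$.

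It then remains to produce a least element of $C\cap R_{i_0}$. Write $R_{i_0}=R\bigl(\tfrac{f_1,\dots,f_m}{g}\bigr)=\{\,z\in V:\ z(f_j)\le z(g)\neq 0\ \text{for }1\le j\le m\,\}$. The vertical generalizations of $x'$ in $X'$ are exactly the continuous coarsenings of the valuation $x'$, and these correspond order-reversingly to the convex subgroups $\Delta$ of the value group $\Gamma_{x'}$ lying in an initial segment $[0,\Delta_{\max}]$ of the (complete) chain of all convex subgroups of $\Gamma_{x'}$, with $\Delta=0$ giving $x'$; moreover, for such $\Delta$ the coarsening $x'_\Delta$ automatically lies in $V$. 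Since some $x'_\Delta$ lies in $R_{i_0}$ one has $x'(g)\neq 0$; put $\gamma^{\ast}:=\max_{1\le j\le m}\max\bigl(1,\ x'(f_j)\,x'(g)^{-1}\bigr)\in\Gamma_{x'}$. A short computation shows $x'_\Delta\in R_{i_0}$ if and only if the image of $x'(f_j)\,x'(g)^{-1}$ in $\Gamma_{x'}/\Delta$ is $\le 1$ for every $j$, equivalently $\Delta$ contains an element $\ge\gamma^{\ast}$, equivalently $\Delta\supseteq\Delta^{\ast}$, where $\Delta^{\ast}$ is the smallest convex subgroup of $\Gamma_{x'}$ containing $\gamma^{\ast}$ (which exists, being the intersection of all convex subgroups containing $\gamma^{\ast}$). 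Because $C\cap R_{i_0}\neq\varnothing$ this forces $\Delta^{\ast}\subseteq\Delta_{\max}$, so $C\cap j(X)=\{\,x'_\Delta:\ \Delta^{\ast}\subseteq\Delta\subseteq\Delta_{\max}\,\}$ is an up-set of $C$ with least element $x'_{\Delta^{\ast}}$. Hence $x'_{\Delta^{\ast}}$ is the required minimal vertical generalization of $x'$ in $j(X)$.

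The main obstacle is the second step: carefully invoking Huber's description (\cite{Hu1993}; see also \cite{Wed19}) of the vertical generalizations of a point of an adic space as the continuous coarsenings of its valuation, verifying that they are indexed by an initial interval of convex subgroups, and translating the conditions defining $R_{i_0}$ into the single condition $\Delta\supseteq\Delta^{\ast}$ — with due care for the supports (the requirement $z(g)\neq 0$) and for the continuity of the intermediate coarsening $x'_{\Delta^{\ast}}$. Quasi-compactness of $j$ enters only in the reduction carried out in the first step.
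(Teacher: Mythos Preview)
Your argument is correct and gives an explicit description of the minimal vertical generalization, but it takes a rather different route from the paper's proof. The paper argues abstractly: choosing an affinoid neighbourhood $U$ of $x'$, it observes that $j(X)\cap U$ is pro-constructible in $U$ (since $j$ is a quasi-compact open embedding), that the set $G^{\mathrm{v}}(x')$ of vertical generalizations of $x'$ is pro-constructible in $U$ (being the image of the adic spectrum of the valued field of $x'$), and hence that $j(X)\cap G^{\mathrm{v}}(x')$ is pro-constructible in the spectral space $U$, so itself spectral and therefore possessing a closed point. This is a three-line argument using only general spectral-space machinery.

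Your approach instead unwinds the structure of $G^{\mathrm{v}}(x')$ explicitly via convex subgroups of $\Gamma_{x'}$ and reduces the membership condition for a single rational subset to the containment $\Delta\supseteq\Delta^\ast$. This is longer and requires checking that the continuous coarsenings of $x'$ form a down-set in the chain of convex subgroups (which holds: in the analytic case these are exactly the proper convex subgroups, in the non-analytic case all of them), but it has the advantage of producing the minimal point by an explicit formula. One small cosmetic point: your notation $[0,\Delta_{\max}]$ suggests a largest element, which need not exist in the analytic case; what you actually use (and what is true) is only that the relevant set of convex subgroups is downward closed and contains some $\Delta$ with $\Delta^\ast\subseteq\Delta$, whence $\Delta^\ast$ lies in it as well. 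The paper's proof trades this explicitness for brevity and avoids the case analysis entirely.
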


\begin{proof}

Let us denote by $G^\text{v}(x')$ the set of all vertical generalizations of $x'$ in $X'$. We wants to show that $j(X) \cap G^\text{v}(x')$ admits a closed point. Since $j$ is quasi-compact, $j(X)$ is pro-constructible in $X'$. Let $U$ be an open affinoid adic space of $X'$ which contains $x'$. Then $j(X) \cap U$ is pro-constructible in $U$ and $G^\text{v}(x') \subset U$ is isomorphic to the adic spectrum of the valued field of $x'$ and thus, also pro-constructible in $U$. This shows $j(X) \cap G^\text{v}(x')$ is pro-constructible in $U$. It further shows $j(X) \cap G^\text{v}(x')$ is a spectral space, because $U$ is spectral. This concludes our proof.

\end{proof}

\begin{lemma}\label{stalk}
	\begin{enumerate}[label=\roman*)]

		\item 	Let $U \rightarrow S$ be a morphism of $^+$weakly finite type between affinoid adic spaces and $U_c$ defined as like in Proposition \ref{relcl}. Consider an arbitrary element $y \in U_c$ and the minimal element $z$ among of all vertical generalization of $y$ which are contained in $U.$ Then for every open subset $W \subset U$ which contains $z$ there is an open subset $W' \subset U_c$ which contains $y$ such that $W' \cap U = W$.
		
		\item Let $j \colon X \rightarrow X'$ be a quasi-compact open embedding between adic spaces such that $j(X)$ is closed under generalized horizontal specializations in $X'$ and we assume $x' \in X'$ has a vertical generalization in $j(X)$. Let  $j(x) \in j(X)$ be a minimal vertical generalization of $x'$ among all $x \in X$. Let $U$ be an open quasi-compact neighbourhood of $x'$. Then for any open subset $W \subset j(j^{-1}(U))$ which contains $j(x)$ there exists a open subset $W' \subset U$ which contains $x'$ such that $W' \cap j(j^{-1}(U)) = W$.
	\end{enumerate}
\end{lemma}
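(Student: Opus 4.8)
The plan is to deduce part i) from part ii) and then prove ii) directly, since i) is essentially the special case where $X' = U_c$, $X = U$, and the open quasi-compact neighbourhood is all of $U_c$ itself. Indeed, by Proposition \ref{relcl} i) and iv), the inclusion $\phi \colon U \hookrightarrow U_c$ is a quasi-compact open embedding whose image is closed under generalized horizontal specializations; every point of $U_c$ has a vertical generalization in $\phi(U)$ by Proposition \ref{relcl} ii), and the minimal such generalization exists by Lemma \ref{min}. So once ii) is established, applying it with $U_c$ in the role of $U$ gives i) verbatim.

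For part ii), I would argue as follows. Write $V \coloneqq j(j^{-1}(U))$, an open subset of $U$, and recall that $j(x) \in V$ is the minimal vertical generalization of $x'$ among points of $j(X)$ lying over $x'$; this minimal element exists by Lemma \ref{min} applied inside the quasi-compact open $U$ (note $x'$ has a vertical generalization in $j(X)$, hence in $V$). Let $W \subset V$ be open with $j(x) \in W$. Set $W' \coloneqq U \setminus \overline{(V \setminus W)}$, where the closure is taken in $U$. This is open in $U$ by construction, and $W' \cap V = V \setminus \overline{V \setminus W} = W$ since $V \setminus W$ is closed in $V$ (it is relatively closed: $W$ open in $V$) — more precisely, because $V$ is open in $U$, a subset closed in $V$ need not be closed in $U$, so I must be more careful and instead take $W' \coloneqq U \setminus \overline{V \setminus W}$ and verify $W' \cap V = W$, which holds as soon as $V \setminus \overline{V\setminus W} = W$, i.e. as soon as $V \cap \overline{V \setminus W} = V \setminus W$; the inclusion $\supseteq$ is clear, and $\subseteq$ holds because $V \setminus W$ is closed in $V$. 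It remains to check $x' \in W'$, i.e. that $x' \notin \overline{V \setminus W}$. Here is where the hypotheses are used: suppose $x' \in \overline{V \setminus W}$. Since $x'$ is a specialization of points of $V$, and $X'$ is spectral-local around $x'$, the closure $\overline{V \setminus W}$ consists of specializations of points of $V \setminus W$; thus there is $p \in V \setminus W$ with $x' \in \overline{\{p\}}$. The specialization $p \rightsquigarrow x'$ factors, by \cite[Corollary ~ 3.11.7]{Hu1993} (and the generalized-horizontal version discussed above), into a generalized horizontal specialization followed by a vertical specialization, say $p \rightsquigarrow p_0 \rightsquigarrow x'$ with the first step generalized horizontal and the second vertical. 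Since $j(X)$ is closed under generalized horizontal specialization and $p \in j(X)$, we get $p_0 \in j(X)$, say $p_0 = j(q)$; then $j(q)$ is a vertical generalization of $x'$ in $j(X)$, so by minimality of $j(x)$ we have $j(x) \rightsquigarrow j(q)$ vertically, hence $j(x) \rightsquigarrow p$ (composing with $p_0 = j(q) \rightsquigarrow p$, running the generalized horizontal step backwards is not allowed — instead use that $j(x)$ is the minimal vertical generalization, so $j(q)$ specializes to ... ). The clean way: minimality of $j(x)$ among vertical generalizations, together with $j(q)$ being a vertical generalization of $x'$, forces $j(q)$ to be a vertical specialization of $j(x)$; but $p_0 = j(q)$ is also a generalized horizontal generalization of $p \in V \setminus W$ — and $V \setminus W$ is stable under generalized horizontal specialization within $V$ only if $W$ were, which it need not be, so instead I track $q$ directly: $q \in j^{-1}(V \setminus W)$ would be forced, contradicting that $j(x)$, the minimal vertical generalization, lies in $W$ and every vertical specialization of $j(x)$ lying in $V$...

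The main obstacle, as the tangle above shows, is the combinatorics of the specialization order: one must show that no point of $V \setminus W$ can specialize to $x'$, which requires decomposing an arbitrary specialization into its generalized-horizontal and vertical parts and using both the closure of $j(X)$ under generalized horizontal specialization and the minimality of $j(x)$. I expect the correct bookkeeping to run: any $p \in \overline{V \setminus W}$ with $x' \in \overline{\{p\}}$ yields (via the factorization and closure of $j(X)$) a point $j(q) \in j(X)$ that is simultaneously a vertical generalization of $x'$ — hence a vertical specialization of the minimal $j(x)$ — and generically close to $p$; chasing $q$ back into $j^{-1}(V)$ and using that $W$ is an open neighbourhood of $j(x)$ closed under the relevant generalizations inside $V$ will produce the contradiction $p \in W$. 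Once $x' \in W'$ is secured, the identity $W' \cap V = W$ completes ii), and i) follows as explained. I would carry out the steps in the order: (1) reduce i) to ii); (2) construct $W'$ as the complement of a closure; (3) verify $W' \cap V = W$; (4) verify $x' \in W'$ via the specialization-decomposition argument, which is the crux.
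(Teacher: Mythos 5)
Your setup is sound and follows the same route as the paper: reducing i) to ii) via Proposition \ref{relcl} i), ii), iv) and Lemma \ref{min} is legitimate, taking $W' = U \setminus \overline{V\setminus W}$ (closure in $U$, $V = j(j^{-1}(U))$) and checking $W'\cap V = W$ because $V\setminus W$ is closed in $V$ is exactly the contrapositive formulation the paper uses, and the first moves of the crux (pro-constructibility of $V\setminus W$ gives $p\in V\setminus W$ specializing to $x'$; factor $p \rightsquigarrow p_0 \rightsquigarrow x'$ into a generalized horizontal followed by a vertical specialization; closure of $j(X)$ under generalized horizontal specialization gives $p_0\in j(X)$) are the paper's steps as well. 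But the crux is never completed, and the sketched completion is wrong in its key relation: you assert twice that $p_0=j(q)$ is a vertical \emph{specialization} of the minimal $j(x)$, whereas minimality (``closed point'' of the set of vertical generalizations of $x'$ in $j(X)$, as in Lemma \ref{min} and as used in the paper's proof of i): ``$z$ is a specialization of $h$'') means precisely the opposite: $j(x)$ is a specialization of \emph{every} vertical generalization of $x'$ lying in $j(X)$, in particular of $p_0$. You then aim at the contradiction ``$p\in W$'' and worry about $V\setminus W$ or $W$ being stable under generalized horizontal specializations, neither of which is needed or true in general; this is why the argument trails off.

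The correct finish is short and requires no stability of $W$: since $x'\in\overline{\{p_0\}}$ and $U$ is open, $p_0\in U$, so $p_0\in j(X)\cap U = V$; by minimality $j(x)\in\overline{\{p_0\}}\subseteq\overline{\{p\}}\subseteq\overline{V\setminus W}$; but $\overline{V\setminus W}\cap V = V\setminus W$ (closure in $V$ of a set closed in $V$), so $j(x)\in V\setminus W$, contradicting $j(x)\in W$. This is exactly the paper's argument for i) (with $U_c$, $U$, $z$, $y$ in place of $U$, $V$, $j(x)$, $x'$), transported to ii) as the paper indicates. As written, your proposal has a genuine gap at the only nontrivial step, so it does not constitute a proof.
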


\begin{proof}
$i)$	Note that the statement in this lemma is equivalent to stating that for every open neighbourhood $W \subset U$ of $z$ the point $y$ is not in the topological closure of $U \backslash W$ in $U_c$. We show the latter statement by contradiction. Assume there is an open affinoid neighbourhood $W \subset U$ of $z$ such that $y$ is an element of the topological closure of $U \backslash W$ in $U_c$. Since $U\backslash W$ is pro-constructible, there is a $w \in U \backslash W$ such that $y$ is a specialization of $w$. Let $h$ be a generalized horizontal specialization of $w$ and a vertical generalization of $y$. Then by Lemma \ref{gen.hor.} we know that $h \in U \backslash W$. Further, $z$ is a specialization of $h$ because $z$ is the smallest vertical generalization of $y$ which is contained in $U$. Hence $z$ is an element of the topological closure of $U \backslash W$ in $U_c$ and we get a contradiction. 
\bigskip

$ii)$ Note that $j(j^{-1}(U))$ is closed under generalized horizontal specializations in $U$. Further in the proof of $i)$ we need that $U$ is a constructible subset in $U_c$. Since $j$ is a quasi-compact open embedding, we have that $j(j^{-1}(U))$ is constructible. Hence we can mimic the proof by replacing $U$ with $j(j^{-1}(U))$ and $U_c$ with $U$.
\end{proof}

\begin{corollary}\label{iso}
	
	\begin{enumerate}[label=\roman*)]
		\item We assume the situation from the Lemma before in i). Then the canonical map $\mathcal{O}_{U_c,y} \rightarrow \mathcal{O}_{U,z}$  is an isomorphism.
		\item Assuming the situation from the lemma before in ii) and that $\mathcal{O}_{X'} \rightarrow j_*\mathcal{O}_{X}$ is an isomorphism of sheaves of topological rings, we get that the canonical map $\mathcal{O}_{X', x'} \rightarrow \mathcal{O}_{X,j(x)}$ is an isomorphism.
	\end{enumerate}
	
\end{corollary}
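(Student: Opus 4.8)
The plan for both parts is the same: compute the two stalks as filtered colimits of sections over open neighbourhoods, identify the two transition systems via Proposition \ref{relcl} iii) (respectively the hypothesis $\mathcal{O}_{X'}\xrightarrow{\sim}j_*\mathcal{O}_X$), and conclude that the systems are cofinally equivalent using Lemma \ref{stalk}.

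For i), first note that by Proposition \ref{relcl} i) and iii) the open immersion $\phi$ identifies $U$ with an open subspace of $U_c$; in particular $\mathcal{O}_{U_c,z}=\mathcal{O}_{U,z}$ for $z\in U$, and, since $z$ is a vertical (hence topological) generalization of $y$, every open neighbourhood of $y$ in $U_c$ contains $z$ — this is exactly what makes the canonical map $\mathcal{O}_{U_c,y}\to\mathcal{O}_{U_c,z}=\mathcal{O}_{U,z}$ defined. Writing $\mathcal{O}_{U_c,y}=\colim_{W'}\mathcal{O}_{U_c}(W')$, the colimit running over the (directed, by reverse inclusion) poset of open neighbourhoods $W'$ of $y$ in $U_c$, Proposition \ref{relcl} iii) rewrites this diagram, compatibly with the restriction maps, as $W'\mapsto\mathcal{O}_U(W'\cap U)$. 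I would then check that $W'\mapsto W'\cap U$ is a cofinal functor from the poset of open neighbourhoods of $y$ in $U_c$ to the poset of open neighbourhoods of $z$ in $U$: it lands there because $z\in W'$ always; non-emptiness of the comma fibres — for every open neighbourhood $W$ of $z$ in $U$ some $W'$ satisfies $W'\cap U=W$ — is precisely Lemma \ref{stalk} i); and connectedness of those fibres follows from directedness by intersecting two competitors. A cofinal functor does not change a colimit, and under this functor the diagram $W'\mapsto\mathcal{O}_U(W'\cap U)$ is pulled back from the diagram $W\mapsto\mathcal{O}_U(W)$ computing $\mathcal{O}_{U,z}$, so $\mathcal{O}_{U_c,y}\cong\mathcal{O}_{U,z}$; unwinding the identifications shows this isomorphism is the canonical map, and since Proposition \ref{relcl} iii) is an isomorphism of sheaves of topological rings it is an isomorphism of topological rings. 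Concretely, surjectivity comes from lifting a germ $s\in\mathcal{O}_U(W)$ at $z$ to the section on a $W'$ with $W'\cap U=W$ furnished by Lemma \ref{stalk} i), and injectivity from shrinking a $W'$ on which a germ at $y$ already vanishes near $z$ to its intersection with such a $W'$.

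For ii) the argument is formally identical, with $U_c$ replaced by $X'$, $U$ (via $\phi$) replaced by $j(X)$ (via $j$), $y$ by $x'$, $z$ by $j(x)$, Proposition \ref{relcl} iii) by the hypothesis $\mathcal{O}_{X'}\xrightarrow{\sim}j_*\mathcal{O}_X$, and Lemma \ref{stalk} i) by Lemma \ref{stalk} ii). The one extra preliminary step is to fix a quasi-compact open neighbourhood $U$ of $x'$ in $X'$, which exists because adic spaces are locally spectral and is needed to apply Lemma \ref{stalk} ii); this costs nothing, since the open neighbourhoods of $x'$ contained in $U$ (respectively of $j(x)$ contained in $j(X)\cap U=j(j^{-1}(U))$) are cofinal, so both stalks may be computed inside $U$. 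Then $W'\mapsto W'\cap j(X)$ is the cofinal functor and $\mathcal{O}_{X'}(W')\cong\mathcal{O}_X(j^{-1}(W'))$ turns the system computing $\mathcal{O}_{X',x'}$ into a pulled-back subdiagram of the one computing $\mathcal{O}_{X,j(x)}$.

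I expect the only genuinely delicate point to be the bookkeeping needed to see that the abstract colimit isomorphism coincides with the canonical stalk map: one must verify that the identification $\mathcal{O}_{U_c}(W')\cong\mathcal{O}_U(W'\cap U)$ supplied by Proposition \ref{relcl} iii) is natural in $W'$ (so that, when $W'\cap U$ is already an open of $U$, the relevant restriction becomes an identity), and similarly in ii). Everything else is the formal fact that a cofinal functor induces an isomorphism on colimits, applied in the category of topological rings, together with the inputs already established in Lemma \ref{stalk}.
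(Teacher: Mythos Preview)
Your proposal is correct and follows essentially the same approach as the paper: both arguments compute the stalks as colimits, use Proposition~\ref{relcl}~iii) (respectively the hypothesis $\mathcal{O}_{X'}\xrightarrow{\sim}j_*\mathcal{O}_X$) to identify sections, and invoke Lemma~\ref{stalk} to establish the required cofinality between the two neighbourhood systems. You are slightly more explicit than the paper about the cofinality formalism and about first restricting to a quasi-compact neighbourhood in~ii), but the substance is identical.
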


\begin{proof}

$i)$ We get

\[ \mathcal{O}_{U,z} = \colim_{z \in W \subset U} \mathcal{O}_U(W) = \colim_{z, y \in W' \subset U_c} \mathcal{O}_U(W' \cap U) \cong \colim_{y \in W' \subset U_c} \mathcal{O}_{U_c}(W'),\]

where the second equality holds by Lemma \ref{stalk} i) and the isomorphism at the end holds, because $\mathcal{O}_{U_c}(V) \rightarrow \mathcal{O}_{U}(V \cap U)$ is an isomorphism for every open subset $V$ in $U_c$, see Proposition \ref{relcl} iii). Now note that the family of open neighbourhoods $V \subset U_c$ of $y$ is a cofinal subset of the family of the neighbourhoods $W' \subset U_c$ which contain $z$ and $y$. Hence we get

\[\colim_{z,y \in W' \subset U_c} \mathcal{O}_{U_c}(W') \cong \colim_{y \in V \subset U_c} \mathcal{O}_{U_c}(V) =  \mathcal{O}_{U_c, y}.\]

$ii)$ The proof is similar as before. 

\end{proof}

Now we show the affinoid version of universal vertical compactification. Although we will later show a more general version, we will prove here that $U_c \rightarrow S$ is $^+$weakly of finite type, which will be important for later. For this reason, the second paragraph in the proof can be skipped. In any case, the second paragraph could be proved using the lemmas that we will comment on in detail below in Lemma \ref{mod}. However, for historical reasons, we would like to refer to the original source \cite{Hu1993} where it was mentioned for the first time. Before the proof, we would like to make a remark about the statements we will use.

\begin{remark}
	
	The second paragraph below will use \cite[Lemma~3.13.4(i)]{Hu1993}. Note that in this lemma, this was proved for analytic spaces and for $Y \rightarrow S$ being locally of finite type and separated. But the proof also works for $Y \rightarrow S$ being locally of $^+$weakly finite type and vertically separated. We need locally of $^+$weakly finite type to have an open affinoid subset $V$ of $Y$ such that for a finite subset $E \subset \mathcal{O}_Y(V)$ we get $\mathcal{O}^+_Y(V)$ is the integral closure of $\mathcal{O}^+_S(S)[E \cup \mathcal{O}_Y(V)^{\circ \circ}]$.  Also in said lemma, the proof  \cite[Proposition ~  3.3.13 iii)]{Hu1993} is used for analytic spaces, but it is mentioned in \cite{Hu1993}[Remark ~ 3.6.17] that the necessary part of this proof works also for adic spaces by adding the word "vertical" to "generalization". Also in (b) in the proof of \cite[Lemma ~ 3.13.4 i)]{Hu1993} we have to use that $x \in U$ is the minimal vertical generalization of $z$ to ensure that $\mathcal{O}_{Z,z} \rightarrow \mathcal{O}_{X,x}$ is injective by Corollary \ref{iso} i), see also the discussion below in Lemma \ref{mod} i).  Hence, we can really use this lemma for our theorem.
	
\end{remark}

\begin{theorem}\label{baby}
	Let $f \colon U \rightarrow S$ be a morphism of $^+$weakly finite type between affinoid spaces, where $S$ is a stable adic space. Then $f' \colon U_c \rightarrow S$ is a universal vertical compactification of $f$. 
\end{theorem}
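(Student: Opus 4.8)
The plan is to verify the two clauses of Definition~\ref{comp}: that $(U_c,f',\phi)$ is a vertical compactification of $f$, and that it is universal. Throughout I identify $U$ with the open subspace $\phi(U)\subseteq U_c$. That $\phi$ is a locally closed embedding is immediate from Proposition~\ref{relcl} i), where it is shown to be an open immersion. The morphism $f'\colon U_c\rightarrow S$ is locally of $^+$weakly finite type essentially by construction: taking the affinoids $U_c$ and $S$ themselves, $\mathcal{O}_S(S)\rightarrow\mathcal{O}_{U_c}(U_c)=\mathcal{O}_U(U)$ is of topologically finite type (since $f$ is of weakly finite type) and $\mathcal{O}^+_{U_c}(U_c)=I(U)$ is by definition the integral closure of $\mathcal{O}^+_S(S)[\mathcal{O}_{U_c}(U_c)^{\circ\circ}]$, so one may take $E=\emptyset$ (alternatively one argues via \cite[Lemma~3.13.4(i)]{Hu1993} as in the second paragraph, or via Lemma~\ref{mod}). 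Then $f'$ is locally of weakly finite type and, $S$ being stable, $U_c$ is a stable adic space with $U_c\times_S U_c$ existing; as $U_c$ is affinoid, every valuated valuation ring of $U_c$ has at most one center on $U_c$ by Lemma~\ref{criterion} ii), so the valuated-valuation-ring criterion for separatedness \cite[Proposition~3.11.12]{Hu1993} is satisfied vacuously and $f'$ is separated.

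It remains, for the first clause, to see that $f'$ is universally vertically specializing. By \cite[Proposition~1.3.8]{Hu1996} this reduces to the lifting property: if a valuation ring $(x',A)$ of $U_c$ has image under $f'$ with a center $s'$ on $S$, then $(x',A)$ has a center $y$ on $U_c$ with $f'(y)=s'$. By Lemma~\ref{criterion} i) the hypothesis says the image of $\mathcal{O}^+_S(S)$ in $k(x')$ lies in $A$. The image of $\mathcal{O}_U(U)^{\circ\circ}$ in $k(x')$ consists of topologically nilpotent elements, hence lies in $k(x')^{\circ\circ}$, and $k(x')^{\circ\circ}\subseteq A$: this is trivial when $x'$ is non-analytic (then $k(x')$ is discrete, so $k(x')^{\circ\circ}=0$), and otherwise follows since $A\subseteq k(x')^+\subseteq k(x')^{\circ}$ are valuation rings of $k(x')$, so $k(x')^{\circ\circ}=\mathfrak{m}_{k(x')^{\circ}}\subseteq\mathfrak{m}_A\subseteq A$. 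As $A$ is integrally closed in $k(x')$, the image of $I(U)=\mathcal{O}^+_{U_c}(U_c)$ in $k(x')$ then lies in $A$, so $(x',A)$ has a center $y$ on $U_c$ by Lemma~\ref{criterion} i); and $f'(y)$ and $s'$ are both centers on $S$ of the image of $(x',A)$ under $f'$ (Proposition~\ref{cut}), hence equal by Lemma~\ref{criterion} ii). Thus $f'$ is vertically partially proper, and $(U_c,f',\phi)$ is a vertical compactification.

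For the universal property, let $g\colon Y\rightarrow S$ be vertically partially proper and $h\colon U\rightarrow Y$ with $g\circ h=f$; I would build $i\colon U_c\rightarrow Y$ by the valuative criterion for $g$. Given $x'\in U_c$, Proposition~\ref{relcl} ii) and Lemma~\ref{min} provide a minimal vertical generalization $z\in U$ of $x'$, and by Lemma~\ref{center} the specialization $z\rightsquigarrow x'$ is recorded by a valuation ring $A$ of $k(z)$ with $x'$ the center of $(z,A)$ on $U_c$. The valuation ring $(h(z),A\cap k(h(z)))$ of $Y$ has image under $g$ on $S$ equal to $(f(z),A\cap k(f(z)))$, which by Proposition~\ref{cut} applied to $f'$ has the center $f'(x')$ on $S$; since $g$ is universally vertically specializing, \cite[Proposition~1.3.8]{Hu1996} yields a center $y'$ of $(h(z),A\cap k(h(z)))$ on $Y$ with $g(y')=f'(x')$, unique because $g$ is separated, hence vertically separated. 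I set $i(x'):=y'$. Then $g\circ i=f'$ by construction, and $i\circ\phi=h$: when $x'=\phi(u)$ the point $u$ is a center of $(z,A)$ on $U$, so $h(u)$ is a center of $(h(z),A\cap k(h(z)))$ on $Y$ lying over $f'(x')$ by Proposition~\ref{cut}, whence $h(u)=y'$ by vertical separatedness.

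The step I expect to be the main obstacle is promoting $i$ to a morphism of adic spaces and checking continuity. Using the isomorphism $\mathcal{O}_{U_c,x'}\cong\mathcal{O}_{U,z}$ of Corollary~\ref{iso} i), the homomorphism $h^{\flat}_z\colon\mathcal{O}_{Y,h(z)}\rightarrow\mathcal{O}_{U,z}$, and the fact that $i(x')$ is a center, hence a specialization, of $(h(z),A\cap k(h(z)))$, one obtains candidate local stalk homomorphisms $\mathcal{O}_{Y,i(x')}\rightarrow\mathcal{O}_{U_c,x'}$; since $\mathcal{O}_{U_c}=\phi_*\mathcal{O}_U$ (Proposition~\ref{relcl} iii)), these would assemble into a sheaf homomorphism once $i$ is known to be continuous, and one must further check compatibility with the valuations. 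Here the partial properness of $g$ together with the density of $U$ in $U_c$ (Proposition~\ref{relcl} ii)) is what makes the verification work, and I would follow Huber's argument for \cite[Theorem~5.1.5]{Hu1993} closely; the sole appeal to the locally Noetherian hypothesis there is through \cite[Lemma~3.13.4(i)]{Hu1993} and \cite[Lemma~1.3.14(i)]{Hu1996}, which Corollary~\ref{iso} i) and Lemma~\ref{mod} allow one to bypass. Uniqueness is comparatively soft: any $i'$ with $i'\circ\phi=h$ and $g\circ i'=f'$ must send $x'$ to a center of $(h(z),A\cap k(h(z)))$ on $Y$ over $f'(x')$, so $i'(x')=i(x')$ by vertical separatedness of $g$, and agreement on the structure sheaves then follows from $\mathcal{O}_{U_c}=\phi_*\mathcal{O}_U$.
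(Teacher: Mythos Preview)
Your proof is correct and follows essentially the same route as the paper. Two points of comparison are worth recording.

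For the $^+$weakly finite type of $f'$, your shortcut ``take $U_c$ and $S$ globally with $E=\emptyset$'' relies on $\mathcal{O}_S(S)\to\mathcal{O}_U(U)$ being of topologically finite type as a single ring map; the paper instead argues pointwise via Lemma~\ref{stalk}, choosing for each $y\in U_c$ the minimal vertical generalization $z\in U$, an affinoid witness $V\ni z$ for the local $^+$weakly finite type condition, and then a rational subset $Z\subseteq U_c$ with $Z\cap U\subseteq V$. The paper's argument thus works under the \emph{locally} $^+$weakly finite type hypothesis, while yours needs the global one (which the theorem statement, as phrased, arguably grants).

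For the universal property, the paper proceeds more black-box: it checks that every valuation ring $(x,A)$ of $U$ having a center on $U_c$ automatically has a center on $Y$ (immediate since such an $(x,A)$ has a center on $S$, and $g\colon Y\to S$ is vertically partially proper), and then simply cites \cite[Lemma~3.13.4(i)]{Hu1993} to obtain the unique extension $i\colon U_c\to Y$. What you do---build $i$ pointwise from centers and then promote it to a morphism of adic spaces using the stalk isomorphism of Corollary~\ref{iso}---is precisely the content of that lemma (in the form discussed around Lemma~\ref{mod}); your reference to \cite[Theorem~5.1.5]{Hu1993} should be to \cite[Lemma~3.13.4(i)]{Hu1993}. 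So you are re-deriving by hand what the paper invokes as a ready-made tool, but the substance is the same.
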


\begin{proof}

First we show that $f'$ is a vertical compactification of $f$. By Proposition \ref{relcl} $i)$ we identify $U$ as an open subset of $U_c$. We want to show that $f'$ is vertically partially proper and in order to do this, we first prove that $f'$ is of $^+$weakly finite type. We may assume that $U$ is attached to a Huber pair $(A,A^+)$ where $A$ is complete. Let $y \in U_c$ be arbitrary and $z$ the minimal element of all vertical generalizations of $y$ which are contained in $U$. We choose an open affinoid subset $V$ of $U$ which is a witness for $U \rightarrow S$ being of $^+$weakly finite type.  By Lemma \ref{stalk} there is an open subset $W'$ in $U_c$ which contains $y$ such that $W' \cap U = V$. Let $Z$ be a rational subset of $U_c$ which is also a subset of $W'$. Then $Z \cap U$ is a rational subset of $U$ and $Z \cap U \subset V$. Therefore, $\mathcal{O}_S(S) \rightarrow  \mathcal{O}_U(Z \cap U) = O_{U_c}(Z) $ is of topologically finite type and since $Z$ is a rational subset of $U_c$, it has the form Spa$(A \langle \frac{T}{g} \rangle, $$\mathcal{O}^+_S(S)[A\langle \frac{T}{g} \rangle^{\circ \circ} \cup \frac{T}{g}]^{\mathrm{int}}$).

We show that $f'$ is vertically separated and universally vertically specializing. For this let $(x,A)$ be a valuation ring on $U_c$ with center $s$ on $S$. We know that morphisms between affinoid spaces are always quasi-separated and since $U_c$ is an affinoid space, by Lemma \ref{criterion} ii) $(x,A)$ has at most one center on $U_c$. Hence $f'$ is vertically separated. Because $(x,A)$ has a center on $S$, we get by Lemma \ref{criterion}i) that the canonical morphism $\mathcal{O}^+_S(S) \rightarrow k(f'(x))$ factors through $A \cap k(f'(x))$. Every valuation ring is open and integrally closed. By construction $\mathcal{O}_{U_c}^+(U_c)$ is the integral closure of $\mathcal{O}^+_S(S)[\mathcal{O}_U(U)^{\circ \circ}]$ in $\mathcal{O}_U(U)$. Hence the canonical morphism $\mathcal{O}_{U_c}^+(U_c) \rightarrow k(x)$ factors through $A$. This shows $f'$ is universally vertically specializing. 
\bigskip

Now we show that $f'$ is indeed universal. Let $Y \rightarrow S$ be a vertically partially proper map such that the first diagram in Definition \ref{comp} ii) commutes  and $Y$ any adic space. We have to show that every valuation ring $(x,A)$ on $U$ which has a center on $U_c$ also has a center on $Y$. Then by  \cite[Lemma ~ 3.13.4 i)]{Hu1993} there exists a unique map $U_c \rightarrow Y$ with the right properties for $U_c$ being a universal vertical compactification. 

Let $(x,A)$ be a valuation ring which has center on $U_c$. Then $(x,A)$ has a center $z$ on $S$. Since $Y \rightarrow S$ is vertically partially proper, hence there exists a center $y$ of $(x,A)$ on $Y$ such that $y$ is mapped to $z$. This concludes the proof. 

\end{proof}

\begin{remark}\label{lift}
	From the proof of this theorem we saw that $f' \colon U_c \rightarrow S$ always has the lifting property with respect to valuation rings $(x,A)$ on $U_c$ which have a center on $S$ and we only need the assumption about $S$ to conclude from this that $f'$ is universally vertically specializing. The assumption about $f$ being of $^+$weakly finite type is only necessary to show that $f'$ is a universal vertical compactification.
\end{remark}

\begin{proposition}\label{homeo}
	Let $X \rightarrow S$ be a morphism of adic spaces with $S$ an affinoid adic space. Let $U,V$ be open affinoid subspaces of $X$ with $U \subset V$. Then the morphism $f \colon U_c \rightarrow V_c$ induced by the restriction map is a topological embedding.
\end{proposition}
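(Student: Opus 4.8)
The plan is to upgrade Proposition \ref{inj}. By that proposition $f\colon U_c\to V_c$ is spectral and injective; moreover it is adic (the restriction $\mathcal O_V(V)\to\mathcal O_U(U)$ is adic, since $U\hookrightarrow V$ is an open immersion) and, being spectral with affinoid source, quasi-compact, so $f(U_c)$ is pro-constructible in $V_c$ and hence a spectral space for the subspace topology. Now a spectral bijection of spectral spaces is a homeomorphism as soon as it reflects specializations (i.e.\ $g(a)\rightsquigarrow g(b)$ implies $a\rightsquigarrow b$): passing to the constructible topologies it is automatically a homeomorphism, being a continuous bijection of compact Hausdorff spaces, and since a constructible subset stable under generization is quasi-compact open, reflecting specializations is exactly what is needed to see that $g$ carries quasi-compact opens to quasi-compact opens. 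Thus it suffices to prove that $f$ reflects specializations: for $a,b\in U_c$ with $f(a)\rightsquigarrow f(b)$ in $V_c$, one must show $a\rightsquigarrow b$ in $U_c$.

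To prove this I would argue as in the proof of Proposition \ref{inj} (equivalently, of \cite[Lemma~1.3.14]{Hu1996}), inserting ``vertical'' where needed. Using Proposition \ref{relcl} ii) and Lemma \ref{min}, choose the minimal vertical generalizations $z_a,z_b\in\phi_U(U)$ of $a$ and $b$. Since $f$ is continuous and is induced by a ring homomorphism it preserves vertical specializations (the support is unchanged), so $f(z_a)$, $f(z_b)$ are vertical generalizations of $f(a)$, $f(b)$ lying in $\phi_V(U)\subseteq\phi_V(V)$, and, exactly as in Proposition \ref{inj}, the open immersion $\phi_U(U)\hookrightarrow V_c$ carries the vertical generalizations of a point of $U$ onto those of its image (\cite[Proposition~3.8.9]{Hu1993}), which together with Lemma \ref{min} identifies $f(z_a)$, $f(z_b)$ with the minimal vertical generalizations of $f(a)$, $f(b)$ inside $\phi_V(V)$. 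One then decomposes the specialization $f(a)\rightsquigarrow f(b)$ into a horizontal and a vertical part and lifts each to $U_c$: the horizontal part is controlled because $\phi_U(U)$ is closed under generalized horizontal specialization in $U_c$ (Proposition \ref{relcl} iv), Lemma \ref{gen.hor.}) and by \cite[Lemma~1.3.16]{Hu1996}/\cite[Proposition~3.11.23]{Hu1996}, as in Proposition \ref{inj}; the vertical part is lifted using that $U_c$ is affinoid, so that the relevant valuated valuation ring of $U_c$ over $V_c$ has a centre, via Lemmas \ref{criterion} and \ref{val}.

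The main obstacle is to make sure the lifted specialization lands on the \emph{prescribed} point $b$ and not merely on some point of $U_c$ over $f(b)$. This is handled by injectivity of $f$ (Proposition \ref{inj}) together with the uniqueness statements Lemma \ref{criterion} ii) (at most one centre on an affinoid) and Lemma \ref{val} i), plus the matching of minimal vertical generalizations established above. Reassembling the lifted horizontal and vertical parts gives $a\rightsquigarrow b$ in $U_c$, which by the reduction of the first paragraph completes the proof. (One can also avoid the constructible-topology reduction and argue directly that every open of $U_c$ is the $f$-preimage of an open of $V_c$: given $y\in U_c$ and an open $W\ni y$, take the minimal vertical generalization $z\in\phi_U(U)$ of $y$, choose a rational subset $R_V(T/g)$ of $V$ with $z\in R_V(T/g)\subseteq W\cap\phi_U(U)$, note that the same data cut out an open $W'\subseteq V_c$ with $f^{-1}(W')=R_{U_c}(T/g)$, and then match neighbourhoods of $y$ on the two sides using Lemma \ref{stalk} and Corollary \ref{iso}; the delicate point there is again that rational subsets are not stable under vertical specialization, so the minimality of $z$ must be used essentially.)
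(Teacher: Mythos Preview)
Your reduction to showing that $f$ reflects specializations is exactly the paper's approach: the paper phrases this as showing $U_c \to f(U_c)$ is ``specializing'' (meaning its inverse preserves specializations), which together with pro-constructibility of images forces closedness of $U_c\to f(U_c)$. So the overall strategy matches.

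Where your sketch diverges is in the actual lifting argument, and here the tools you invoke are not the ones that do the work.

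For the \emph{vertical} part: given the intermediate point $h\in V_c$ (a vertical specialization of $f(t)$ and horizontal generalization of $f(s)$), the paper picks a vertical generalization $x\in U$ of $t$ and a valuation ring $(x,A)$ with center $h$; since $(x,A)$ then has a center on $S$, Remark~\ref{lift} lifts this center to $h'\in U_c$, and affinoidness of $V_c$ forces $f(h')=h$. That $h'$ is a vertical specialization of $t$ itself (not merely of something over $f(t)$) follows from \cite[Proposition~3.8.9]{Hu1993} together with injectivity. Your phrasing via ``the relevant valuated valuation ring of $U_c$ over $V_c$ has a centre'' and Lemma~\ref{criterion} is not quite this; the essential input is the lifting property along $U_c\to S$, not just that $U_c$ is affinoid.

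For the \emph{horizontal} part: this is where your sketch has a genuine gap. The paper does \emph{not} use closure of $\phi_U(U)$ under horizontal specialization, nor \cite[Lemma~1.3.16]{Hu1996}/\cite[Proposition~3.11.23]{Hu1996}. Those results go the wrong way for what is needed: you must lift a horizontal specialization $h\rightsquigarrow f(s)$ living in $V_c$ back to one $h'\rightsquigarrow s$ in $U_c$, and closure of $U$ under horizontal specialization inside $U_c$ says nothing about this. The paper instead applies the square-completion property of affinoid spectra, \cite[Lemma~1.1.15 iv) and i)]{Hu1993}, twice: first in $V_c$ to produce $x''\in U$ sitting over $h$ and horizontally generalizing a chosen $x'\in U$ over $s$, then in $U_c$ to produce the unique $s'$ completing the square $(x',x'',h')$; pushing forward to $V_c$ and invoking the \emph{uniqueness} clause of \cite[Lemma~1.1.15 i)]{Hu1993} yields $f(s')=f(s)$, whence $s'=s$ by injectivity. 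This square-completion step is the key mechanism you are missing.
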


\begin{proof}

By Proposition \ref{inj} we already know that $f$ is injective. So it is left to show that the induced map $U_c \rightarrow f(U_c)$ by $f$ is closed. Since $U_c$ is spectral, every closed subset $T \subset U_c$ is pro-constructible. We want to show that $f(T)$ is closed in $f(U_c)$. Since $f(T)$ is pro-constructible, it is enough to show that $U_c \rightarrow f(U_c)$ is specializing. Let us assume $f(s)$ is a specialization of $f(t)$ for $s,t \in U_c$. Since $V_c$ is an adic space, there is a $h \in V_c$ which is a horizontal generalization of $f(s)$ and a vertical specialization of $f(t)$. We show that $h \in f(U_c)$. Let $x \in U$ be a vertical generalization of $t$. Then $f(t)$ is a vertical specialization of $x$, especially $h$ is a vertical specialization of $x$. Note that $f$ is the identity on $U$. Let $(x,A)$ be a valuation ring with center $h$. Then $(x,A)$ has a center on $S$. By the previous remark we can lift the center of $(x,A)$ in $S$ to a center $h'$ of $(x,A)$ on $U_c$. Then $f(h')$ is a center of $(x,A)$ in $V_c$ and since $V_c$ is affinoid, we get $f(h') = h$. 
\bigskip

Now we show that $h'$ is a vertical specialization of $t$. By \cite[Propoisition ~ 3.8.9]{Hu1993} we have $f(G^\text{v}(h)) = G^\text{v}(f(h))$ and thus there is a vertical generalization $t'$ of $h$ with $f(t') = f(t)$. This shows $t = t'$ by injectivity of $f$.
\bigskip

It is left to show that $s$ is a horizontal specialization of $h$. Let $x' \in U$ be a vertical generalization of $s$. Since $V_c$ is affinoid, we get by \cite[Lemma ~ 1.1.15 iv)]{Hu1993} a $x''$ such that we have the following specialization diagram

\begin{center}
	\begin{tikzcd}
		x' \arrow[r, leftsquigarrow] & x'' \\
		f(s) \arrow[u, leftsquigarrow] \arrow[r, leftsquigarrow] & f(h'), \arrow[u, leftsquigarrow]  \\
	\end{tikzcd}
\end{center}
where we mean with $z \rightsquigarrow z'$ that $z'$ is a specialization of $z$ and if we use this arrow within a diagram then its orientation means vertical or horizontal specialization.  Note that $x'' \in U$ because $x' \in U$. Again by \cite[Lemma ~ 1.1.15 i)]{Hu1993}, we get a unique $s'$ in $U_c$ such that 
\begin{center}
	\begin{tikzcd}
		x' \arrow[r, leftsquigarrow] & x'' \\
		s' \arrow[u, leftsquigarrow] \arrow[r, leftsquigarrow] & h'. \arrow[u, leftsquigarrow] \\
	\end{tikzcd}
\end{center}
We get $x''$ is still a vertical generalization of $h'$, because we already showed that $f$ is vertically specializing onto its image.
By applying $f$ we get two diagrams of the form
\begin{center}
	\begin{tikzcd}
		x' \arrow[r, leftsquigarrow] & x'' \\
		f(s') \arrow[u, leftsquigarrow] \arrow[r, leftsquigarrow] & f(h') \arrow[u, leftsquigarrow] \\
	\end{tikzcd}
	\qquad
	\begin{tikzcd}
		x' \arrow[r, leftsquigarrow] & x'' \\
		f(s) \arrow[u, leftsquigarrow] \arrow[r, leftsquigarrow] & f(h'), \arrow[u, leftsquigarrow]  \\
	\end{tikzcd}
\end{center}
which only differ by the lower left corner. By the same lemma as before, the bottom left corner is unique and this implies $f(s')=f(s)$. Thus $s'=s$, because $f$ is injective.  This concludes our proof. 

\end{proof}

\subsection{General case} In this subsection, Theorem \ref{main} shows that a locally of $^+$weakly finite type, separated, and taut morphism $X \to S$, with $X$ a weakly square complete and $S$ a square complete stable adic space, admits a universal vertical compactification. In order to adapt the original proof, we restrict our attention to the class of weakly square complete adic spaces defined in Definition \ref{weaker}. Proposition \ref{weaksqmor} shows that $X'$ of a universal vertical compactification $f' \colon X' \rightarrow S$ of a morphism $f \colon X \rightarrow S$, with $S$ square complete, is necessarily weakly square complete. Furthermore, as in the original proof of Huber $X'$ will be constructed from a gluing datum in Theorem \ref{main}. The gluing datum is given by certain open subsets $U_d$ of $U_c$, see Definition \ref{cmap}, for each open affinoid subset $U$ of $X$. We want to adapt the original proof. As will be shown in Lemma \ref{openset}, $U_d$ is also in the non-analytic case open, if we require that $X$ be weakly square complete. To prove the mentioned lemma, we have to use Lemma \ref{horizontal}.

\begin{remark} We will make some remarks about the used lemmas in the proof of Theorem \ref{main}, which were originally formulated for analytic adic spaces and modify them for our purpose. Especially, we don't need the assumption about being analytic for the adic spaces. Since the modifications are minor and only one point needs to be argued anew at most, we have omitted rewriting the proofs of the lemmas in full in this paper. We assume that the reader is familiar with the lemmas mentioned in the sketch of the proof in the following lemma.

\end{remark}

\begin{remark}
	
	For the proof of the first statement in the Lemma below we usually need the general assumptions about Huber rings which were made by Huber. But we will show that actually those assumptions are not necessary if we make some extra assumptions. Those extra assumptions will naturally hold in Theorem \ref{main}.
	
\end{remark}

\begin{lemma}\label{mod} For the next two points let $S$ be an adic space, $X'$ and $Y$ adic $S$-spaces, $j \colon X \hookrightarrow X'$ a quasi-compact open immersion such that $X$ is closed under generalized horizontal specializations in $X'$ and $f \colon X \rightarrow Y$ a $S$-morphism.
	\begin{enumerate}[label=\roman*)] 
		
		\item If every point of $X'$ is a vertical specialization of a point of $X$, $\mathcal{O}_{X'} \rightarrow j_*\mathcal{O}_X$ is an isomorphism of sheaves of topological rings and $Y$ is vertically separated over $S$, then there exists at most one $S$-morphism $g \colon X' \rightarrow Y$ with $f = g |_X.$

		\item We assume that $Y$ is locally of $^+$weakly finite type over $S$ and that for every open subset $V$ of $X'$ the restriction mapping $\mathcal{O}_{X'}(V) \rightarrow \mathcal{O}_X(V \cap X)$ is an isomorphism of topological rings. Let $x'$ be a point of $X'$ and $y$ a point of $Y$ such that there exists a valuation ring $(x,A)$ of $X$ such that $x'$ is a center of $(x,A)$ on $X'$, $y$ is a center of $(x,A)$ on $Y$, $x'$ and $y$ lie over the same point of $S$ and $x$ is minimal (closed point) among all vertical generalizations of $x'$ in $X$. Then there exist an open neighbourhood $U$ of $x'$ in $X'$ and an $S$-morphism $g \colon U \rightarrow Y$ with $g|_{U \cap X} = f|_{U \cap X}$ and $y=g(x')$.
		
		\item Let $f \colon X \rightarrow Y$ be a morphism of adic spaces with $X$ a stable adic space and let $u,v \in X$ be such that $u$ is a vertical generalization of $v$. Then $f$ is universally vertically specializing at $v$ if $f$ is vertically separated and universally vertically specializing at $u$.
		
		\item Let 
		\begin{center}
			\begin{tikzcd}
				X \arrow[d, "f"] \arrow[r, hookrightarrow, "j"] & X' \arrow[dl, "f'"] \\
				S & \\
			\end{tikzcd}
		\end{center}
		be a commutative diagram of adic spaces such that $f'$ is vertically partially proper and $j$ is a quasi-compact open embedding such that $j(X)$ is closed under generalized horizontal specializations, every point of $X'$ is a vertical specialization of a point of $j(X)$ and $\mathcal{O}_{X'} \rightarrow j_*\mathcal{O}_X$ is an isomorphism of sheaves of topological rings. Then for every open subset $U$ of $S$ the diagram induced from the diagram above
		\begin{center}
			\begin{tikzcd}
				f^{-1}(U) \arrow[d] \arrow[r, hookrightarrow] & f'^{-1}(U)  \arrow[dl]\\
				U & \\
				
			\end{tikzcd}
		\end{center}
		
		is a universal vertical compactification of $f \colon f^{-1}(U) \rightarrow U$.
		
	\end{enumerate}
\end{lemma}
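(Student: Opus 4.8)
The plan is to reduce the claim to two facts: that the standing hypotheses on the diagram $X \overset{j}{\hookrightarrow} X' \to S$ descend to the restriction over any open $U \subset S$, and that those hypotheses, together with vertical partial properness of a competitor, already force the universal property by means of parts i) and ii) of this lemma. Throughout I write $X_U = f^{-1}(U)$, $X'_U = (f')^{-1}(U)$ and $j_U \colon X_U \hookrightarrow X'_U$ for the induced morphism.

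First I would check that $X_U \overset{j_U}{\hookrightarrow} X'_U \to U$ is again a vertical compactification. The morphism $j_U$ is the base change of $j$ along the open immersion $X'_U \hookrightarrow X'$, hence again a quasi-compact open immersion, and the isomorphism $\mathcal{O}_{X'} \xrightarrow{\sim} j_*\mathcal{O}_X$ restricts to $\mathcal{O}_{X'_U} \xrightarrow{\sim} (j_U)_*\mathcal{O}_{X_U}$ since push-forward commutes with restriction to an open subset of the target. Because $U$ is open, hence closed under generalization, any vertical generalization in $j(X)$ of a point of $X'_U$ maps into $U$, so it lies in $j(X)\cap X'_U = j_U(X_U)$; this gives both that every point of $X'_U$ is a vertical specialization of a point of $X_U$ and, by the same token, that $X_U$ is closed under generalized horizontal specializations in $X'_U$. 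Finally $X'_U \to U$ is vertically partially proper: being locally of $^+$weakly finite type is stable under restriction to open subspaces of source and target; the diagonal of $X'_U$ over $U$ is the restriction of the diagonal of $X'$ over $S$ to the open subspace $X'_U\times_U X'_U \subset X'\times_S X'$, so its image remains closed; and for any adic $U' \to U$ with $U'$ stable one has $X'_U\times_U U' = X'\times_S U'$, so universal vertical specialization is inherited. (Note $X'_U$ is stable, being open in the stable space $X'$.)

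For the universal property, let $g\colon Y \to U$ be vertically partially proper and $h\colon X_U \to Y$ a $U$-morphism; I would build $i\colon X'_U \to Y$ locally and glue. Fix $x' \in X'_U$. By Lemma \ref{min} (applicable since $j_U$ is quasi-compact and $X_U$ is closed under generalized horizontal specializations in $X'_U$) there is a minimal vertical generalization $x$ of $x'$ in $X_U$, and by Lemma \ref{center} ii) a valuation ring $(x,A)$ with support $x$ having $x'$ as center on $X'_U$ (equivalently a valuation ring of $X_U$, as $j_U$ is an open immersion). Pushing $(x,A)$ forward along $h$ (Proposition \ref{cut}) produces a valuation ring on $Y$ whose image on $U$ along $g$ equals the image of $(x,A)$ on $U$ along the structure morphism $f|_{X_U} = g\circ h = f'|_{X'_U}\circ j_U$; since $(x,A)$ is centered at $x'$ on $X'_U$, Proposition \ref{cut} shows this image is centered at $f'(x')$, so the valuation ring on $Y$ has $f'(x')$ as a center on $U$. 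As $g$ is universally vertically specializing, that center lifts to a center $y$ of $(x,A)$ on $Y$ with $g(y)=f'(x')$. Now Lemma \ref{mod} ii), whose remaining hypotheses ($Y$ locally of $^+$weakly finite type over $U$, and $\mathcal{O}_{X'_U}(V)\xrightarrow{\sim}\mathcal{O}_{X_U}(V\cap X_U)$ for all open $V$) were verified above, gives an open neighbourhood $V_{x'}$ of $x'$ in $X'_U$ and a $U$-morphism $g_{x'}\colon V_{x'}\to Y$ with $g_{x'}|_{V_{x'}\cap X_U} = h|_{V_{x'}\cap X_U}$. The $V_{x'}$ cover $X'_U$; on an overlap $V_{x'_1}\cap V_{x'_2}$ the morphisms $g_{x'_1}$ and $g_{x'_2}$ agree on its intersection with $X_U$, and since that open subspace again satisfies all the standing hypotheses over $U$ and $Y$ is vertically separated over $U$ (as $g$ is separated), Lemma \ref{mod} i) forces them to agree. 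Gluing yields a $U$-morphism $i\colon X'_U \to Y$ with $i|_{X_U}=h$, and uniqueness of such an $i$ is Lemma \ref{mod} i) applied once more over $U$.

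I expect the middle of the last paragraph to be the main obstacle: manufacturing, for each $x'$, the valuation-ring datum — support a minimal vertical generalization of $x'$ in $X_U$, center $x'$ on $X'_U$, and a matching center on $Y$ lying over the correct point of $U$ — so that Lemma \ref{mod} ii) actually applies. This leans on Lemma \ref{min}, on Lemma \ref{center}, and crucially on the lifting property of the vertically partially proper $g$; one must also be careful that the $V_{x'}$ genuinely cover $X'_U$ and that the patching cocycle condition is exactly the uniqueness statement of Lemma \ref{mod} i). The remaining points — that each hypothesis descends to the open $U$ — are routine.
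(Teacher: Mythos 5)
Your argument addresses only part iv) of the lemma: you restrict the hypotheses to $f'^{-1}(U) \to U$, construct the comparison morphism locally via part ii) (using the lifting property of the vertically partially proper competitor to produce the center $y$ on $Y$ over $f'(x')$), and glue and prove uniqueness via part i). For part iv) this is essentially the same route as the paper, which follows Huber's Lemma 5.1.7 with ``vertical'' inserted and with the modified versions of i) and ii) as input; your verification that the standing hypotheses descend to $X'_U$ (openness being stable under generalization, restriction of the sheaf isomorphism, restriction of the diagonal, base change over $U' \to U \to S$) is correct, and your use of Lemma \ref{min}, Lemma \ref{center} ii) and Proposition \ref{cut} to put yourself in the situation of part ii) is exactly what is needed.

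The genuine gap is that the statement to be proved comprises all four parts, and you prove none of i), ii), iii) while invoking i) and ii) as known facts; as a proof of the lemma this is circular. Moreover, in this paper the real content sits precisely in i) and ii): Huber's original proof of \cite[Lemma~1.3.14~i)]{Hu1996} shows uniqueness of $g^* \colon \mathcal{O}_{Y,g(x')} \to \mathcal{O}_{X',x'}$ by using that $\mathcal{O}_{X',x'} \to \mathcal{O}_{X,x}$ is local and \emph{flat}, hence injective, and flatness is exactly where the locally Noetherian assumptions enter. Since those assumptions are dropped here, one must argue differently: taking $x$ to be the \emph{minimal} vertical generalization of $x'$ in $X$ (Lemma \ref{min}) and using that $X$ is closed under generalized horizontal specializations together with the sheaf isomorphism, Corollary \ref{iso} shows that $\mathcal{O}_{X',x'} \to \mathcal{O}_{X,x}$ is in fact an isomorphism --- this is the whole reason these extra hypotheses appear in the lemma, and your proposal never touches it. Similarly, part ii) requires a replacement for the step producing an affinoid neighbourhood $V$ of $x'$ with $V \cap X \subset f^{-1}(L)$, which the paper obtains from Lemma \ref{stalk} ii), and part iii) is a separate adaptation of \cite[Lemma~1.10.23~ii)]{Hu1996}. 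Without arguments for i)--iii), the proposal establishes only the (comparatively routine) reduction in iv).
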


\begin{proof}
	Let $d \colon X' \rightarrow S$ and $e \colon Y \rightarrow S$ be the structure morphisms.
	\\
	
	$i)$  We explain why the proof in \cite[Lemma ~ 1.3.14 i)]{Hu1996} still works: We assume $x \in X$ is the minimal vertical generalization of $x' \in X'$, which exists by Lemma \ref{min}. Now the proof works exactly the same by adding the word vertical to the word generalization. We wants to show that $g$ is unique. As map between the topological spaces of $X'$ and $Y$ it is unique because $e$ is vertically separated, see for this the original proof. Now we have to show that the induced morphism $g^* \colon \mathcal{O}_{Y,g(x')} \rightarrow \mathcal{O}_{X', x'}$ on the stalks is unique. For this we consider the same diagram as in the original proof: 
	\begin{center}
		\begin{tikzcd}
			\mathcal{O}_{X,x} & \mathcal{O}_{X',x'} \arrow[l, "\alpha", swap] \\
			\mathcal{O}_{Y,f(x)} \arrow[u, "f^*"] & \mathcal{O}_{Y, g(x')} \arrow[l] \arrow[u, "g^*", swap] \\
		\end{tikzcd}
	\end{center}

	 Note that the morphism $\alpha \colon \mathcal{O}_{X',x'} \rightarrow \mathcal{O}_{X,x}$ in this lemma will be even an isomorphism by Corollary \ref{iso}. The reason for the additional assumptions is to use this corollary.
	We needed all the further assumptions to ensure $\alpha$ to be at least injective. Since we do not use the general assumptions of Huber about Huber rings, in general the morphism $\alpha$ is not flat. Hence, we can not use the usual argument that $\alpha$ is local and flat and thus injective.
	\\
	
	ii) Based on the proof of \cite[Lemma ~ 1.3.14 iii)]{Hu1996}: We copy the beginning of the original proof. We choose affinoid open subsets $L$ and $M$ of $Y$ and $S$ such that $y \in L$, $e(L) \subset M$ and there exists a finite subset $E$ of $\mathcal{O}_{Y}(L)$ such that $\mathcal{O}^+_Y(L)$ is the integral closure of $\mathcal{O}^+_S(M)[E \cup \mathcal{O}_Y^{\circ \circ}]$. So far we didn't add anything new to the proof. Now we have to modify the statement (1) in this proof to:
	\bigskip
	\newline
	Let $G^v$ denote the set of the vertical generalizations of $x'$ in $X'$. Then $G \cap X \subset f^{-1}(L).$
	\bigskip
	\newline
	This is proven the same as in the original proof where we reading specialization as vertical specialization. But now we have to argue a bit different why this statement implies
	 the existence of an affinoid open neighbourhood $V$ of $x'$ in $d^{-1}(M)$ with $V \cap X \subset f^{-1}(L)$: Choose an open quasi-compact neighbourhood $W$ of $x'$ in $d^{-1}(M)$. We consider $W \cap X$, which is quasi-compact. Now we use Lemma \ref{stalk} ii) and get for $f^{-1}(L) \cap W \cap X$ an open neighbourhood $V'$ of $x'$ with $f^{-1}(L) \cap W \cap X = V'$. By choosing an affinoid open neighbourhood $V$ of $x'$ in $f^{-1}(L) \cap W$ we get the desired statement. The rest of the proof remains unchanged.
	\\
	
	iii) In \cite[Lemma ~ 1.10.23 ii)]{Hu1996} we add again the word vertical whenever it makes sense, the proof goes exactly as stated there. Note that the category of adic spaces is a full subcategory of pseudo-adic spaces by \cite[Remark ~ 1.10.2]{Hu1996}.
	\\
	
	iv) That is almost like \cite[Lemma ~ 5.1.7 ]{Hu1996}, we just add again vertical at the right places in the proof. The further assumption about  $j(X)$ being closed under generalized horizontal specializations is needed to use the modified versions of \cite[Lemma ~ 1.3.14 i)]{Hu1996} and \cite[Lemma ~ 1.3.14 iii)]{Hu1996} which we mentioned above.  There is nothing more to add to the proof itself. 
	
\end{proof}
We define a special case of \cite[Definition ~ 3.13]{Hüb25}:

\begin{definition}\label{weaker}
	\begin{enumerate}[label=\roman*)]

		\item An adic space $X$ is called $square$ $complete$ if for any three points $x,x_1$ and $x_2$ in $X$ such that $x_1$ is a horizontal specialization and $x_2$ is a vertical specialization of $x$ there exists a unique point $x'$ in $X$ such that $x'$ is a vertical specialization of $x_1$ and a horizontal specialization of $x_2$. 
		
		\item If the point $x'$ is not necessarily unique then $X$ is called $weakly$ $square$ $complete$.
	\end{enumerate}
\end{definition}

\begin{proposition} \label{weaksqmor}
	Let $X$ be a weakly square complete, $S$ be a square complete adic space and $f \colon X \rightarrow S$ a morphism of adic spaces. Furthermore, let $S'$ be an open weakly square complete subspace of $S$. Then $f^{-1}(S')$ is weakly square complete subspace of $X$.
\end{proposition}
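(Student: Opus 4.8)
The plan is to check the defining property of weak square completeness for $f^{-1}(S')$ by hand: one produces the required corner point inside $X$ using weak square completeness of $X$, and then forces that point to lie in $f^{-1}(S')$ by transporting the whole configuration along $f$, invoking weak square completeness of $S'$ for existence and square completeness of $S$ as a rigidity (uniqueness) device. Observe first that $f^{-1}(S')$ is an open subspace of $X$, hence again an adic space, and that for points of $f^{-1}(S')$ the property of a specialization being horizontal or vertical is intrinsic, since it only depends on the stalks at the two points, which coincide with the stalks computed in $X$. So it suffices to take $x,x_1,x_2\in f^{-1}(S')$ with $x_1$ a horizontal specialization of $x$ and $x_2$ a vertical specialization of $x$, and to exhibit $x'\in f^{-1}(S')$ which is a vertical specialization of $x_1$ and a horizontal specialization of $x_2$.

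The one ingredient not contained in the excerpt is the observation that a morphism $g\colon Z\to W$ of adic spaces preserves horizontal and vertical specializations: if $z$ is a vertical (resp.\ horizontal) specialization of a point $z'$ of $Z$, then $g(z)$ is a vertical (resp.\ horizontal) specialization of $g(z')$. I would prove this from \cite[Corollary~3.11.7]{Hu1993}, which characterizes a vertical specialization $z\rightsquigarrow z'$ by locality of the canonical stalk map $\mathcal{O}_{Z,z}\to\mathcal{O}_{Z,z'}$ and a horizontal one by locality of $\mathcal{O}^{+}_{Z,z}\to\mathcal{O}^{+}_{Z,z'}$. Since $g$ is a morphism of adic spaces the induced maps $\mathcal{O}_{W,g(z)}\to\mathcal{O}_{Z,z}$ and $\mathcal{O}_{W,g(z')}\to\mathcal{O}_{Z,z'}$ on stalks are local, and by compatibility of the point-valuations at $z$ and $g(z)$ the maps $\mathcal{O}^{+}_{W,g(z)}\to\mathcal{O}^{+}_{Z,z}$ are local as well (the preimage of the maximal ideal $\{v_z<1\}$ is $\{v_{g(z)}<1\}$); no sheaf or Noetherian hypothesis is needed here. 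Combining this with the commutative square of the four relevant stalks and the elementary fact that in a composite $a\to b\to c$ in which $a\to c$ and $b\to c$ are local the first arrow $a\to b$ is local, one deduces that the stalk map over $W$ attached to $g(z)\rightsquigarrow g(z')$ is local, i.e.\ $g(z)\rightsquigarrow g(z')$ is a specialization of the same type.

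With the preservation lemma available the argument runs as follows. Apply weak square completeness of $X$ to the triple $x,x_1,x_2$ to get $x'\in X$ that is a vertical specialization of $x_1$ and a horizontal specialization of $x_2$. Put $s=f(x)$, $s_1=f(x_1)$, $s_2=f(x_2)$, all in $S'$. By the preservation lemma, $s_1$ is a horizontal specialization of $s$, $s_2$ is a vertical specialization of $s$, and $f(x')$ is a vertical specialization of $s_1$ and a horizontal specialization of $s_2$. Now apply weak square completeness of $S'$ to $s,s_1,s_2$: there is $t\in S'$ which is a vertical specialization of $s_1$ and a horizontal specialization of $s_2$. Since $S$ is square complete, there is exactly one point of $S$ with this property; as both $t$ and $f(x')$ are such points of $S$, we conclude $f(x')=t\in S'$, hence $x'\in f^{-1}(S')$. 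This verifies that $f^{-1}(S')$ is weakly square complete.

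The main obstacle is the preservation lemma, and within it the point that the $\mathcal{O}^{+}$-stalk map induced by a morphism of adic spaces is local; one must be careful about the direction of the canonical stalk maps and about the fact that this locality comes from compatibility of the valuations at a point and its image. Once that lemma is in place the rest is a short, diagram-free chase, with square completeness of $S$ doing precisely the work of identifying $f(x')$ with a point already known to lie in $S'$.
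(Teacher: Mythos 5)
Your argument is correct and is essentially the paper's own proof: apply weak square completeness of $X$ to obtain $x'$, push the whole specialization square forward along $f$ (which preserves horizontal and vertical specializations), and combine weak square completeness of $S'$ with the uniqueness coming from square completeness of $S$ to conclude $f(x')\in S'$, hence $x'\in f^{-1}(S')$. The only difference is that you spell out the preservation of specialization types via locality of the $\mathcal{O}$- and $\mathcal{O}^+$-stalk maps (in the spirit of \cite[Corollary~3.11.7]{Hu1993}), a fact the paper simply asserts without proof.
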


\begin{proof}
	 Let $x_1,x$ and $x_2$ be points in $f^{-1}(S')$ such that $x$ is horizontal generalization of $x_1$ and a vertical generalization of $x_2$. Since $X$ is weakly square complete there exists a point $x' \in f^{-1}(S')$ which is a vertical specialization of $x_1$ and horizontal specialization of $x_2$. Since $f$ preserves horizontal and vertical specializations, we get a square of specializations where the left bottom corner ist $f(x')$. Because $S'$ is weakly square complete and $S$ square complete, we get by uniqueness that $f(x')$ is in $S'$. This shows that the point $x'$ is in $f^{-1}(S')$ and this implies that $f^{-1}(S')$ is weakly square complete.
\end{proof}

\begin{remark}\label{newdef}
	The previous proposition motivates the following definition: A morphism $f \colon X \rightarrow S$ between adic spaces is called $weakly$ $square$ $complete$ if for every open weakly square complete subspace $S'$ of $S$ the preimage $f^{-1}(S')$ is weakly square complete in $X$. Then in Theorem \ref{main} we could drop the condition on $X$ and $S$, replacing them by $f$ being weakly square complete.
\end{remark}

\begin{proposition} \label{necessary}
	
	Let $f \colon X \rightarrow S$ be morphism of adic spaces where $S$ is a square complete and stable adic space such that $f$ admits a universal vertical compactification $f' \colon X' \rightarrow S.$ Then $X'$ is weakly square complete. 
	
\end{proposition}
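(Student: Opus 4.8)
The plan is to push the specialization square down to $S$, complete it there using that $S$ is square complete, and then lift the resulting point back up to $X'$ using that $f'$ is vertically partially proper. Recall that, being a vertical compactification, $f'$ is locally of $^+$weakly finite type, separated and universally vertically specializing, and that $S$ is stable, so the fibre products entering the valuative criteria exist; we also use throughout that morphisms of adic spaces preserve horizontal and vertical specializations.

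So let $x,x_1,x_2\in X'$ with $x_1$ a horizontal and $x_2$ a vertical specialization of $x$; we must produce $x'\in X'$ which is a vertical specialization of $x_1$ and a horizontal specialization of $x_2$. Applying $f'$, the point $f'(x_1)$ is a horizontal and $f'(x_2)$ a vertical specialization of $f'(x)$, so since $S$ is square complete there is a \emph{unique} $s'\in S$ which is a vertical specialization of $f'(x_1)$ and a horizontal specialization of $f'(x_2)$. As $f'$ is universally vertically specializing at $x_1$, every vertical specialization of $f'(x_1)$ lifts to a vertical specialization of $x_1$ (this is exactly the lifting property recalled before Definition \ref{comp}); applying it to $s'$ we obtain a vertical specialization $x'$ of $x_1$ in $X'$ with $f'(x')=s'$.

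It remains to show that this $x'$ is a horizontal specialization of $x_2$, and this is the heart of the matter. Let $(x,A^{(2)})$ be the unique valuation ring of $X'$ with support $x$ and center $x_2$ (Lemma \ref{center} ii), iii)). By Lemma \ref{criterion} iv) it suffices to exhibit a valuated valuation ring $(x,B,A^{(2)})$ of $X'$ — that is, one with $\nu$-part $(x,A^{(2)})$ — whose center on $X'$ equals $x'$: then $x'$ is, by that lemma, a horizontal specialization of the center $x_2$ of $(x,A^{(2)})$, as desired. One constructs $B$ by transporting the datum encoding the horizontal specialization $x\rightsquigarrow x_1$ across the identifications of Remark \ref{spec. field}: from the local homomorphism $\mathcal{O}^+_{X',x_1}\to\mathcal{O}^+_{X',x}$ between specialization fields and the correspondence between valuation rings of a specialization field and valuation rings of $k(x)$ contained in $k(x)^+$, one obtains a valuation ring $B$ with $A^{(2)}\subseteq B\subseteq k(x)$. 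The work is then to check that the center of $(x,B,A^{(2)})$ on $X'$ is exactly $x'$; the mechanism is that, by Proposition \ref{cut}, the image of $(x,B,A^{(2)})$ on $S$ is a valuated valuation ring whose center is the square completion $s'$, while its center on $X'$ is a vertical specialization of $x_1$ lying over $s'$, so by uniqueness of centers on affinoid opens (Lemma \ref{criterion} ii)) together with the vertical separatedness of $f'$ it can only be $x'$. This reconciliation of the horizontal specialization living in $X'$ with the unique one living in $S$ — where the square completeness of $S$, the separatedness of $f'$ and the universally-vertically-specializing hypothesis are all genuinely used — is the main obstacle, and it is the analogue, one level up, of the affinoid square-completion-and-injectivity argument carried out in the proof of Proposition \ref{homeo}.
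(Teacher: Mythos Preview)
Your overall plan---push the square down to $S$, complete it there, lift back---matches the paper's first move, but the lifting step is where the argument breaks down. The vertically-partially-proper hypothesis on $f'$ gives you a valuative criterion for \emph{valuation rings} $(x,A)$ with $A\subset k(x)^+$: such a thing has a center on $X'$ once it has one on $S$. It does \emph{not} give you a lifting property for valuated valuation rings $(x,B,A^{(2)})$ with $B\not\subset k(x)^+$, and nothing in the paper establishes such a thing. So after you have produced $s'\in S$, you have no mechanism to show that your $(x,B,A^{(2)})$ even possesses a center on $X'$. Relatedly, your construction of $B$ from the horizontal specialization $x\rightsquigarrow x_1$ is too vague to pin down the valuated valuation ring; and even granting a center $z$, the appeal to vertical separatedness to force $z=x'$ does not work, since $x'$ was an \emph{arbitrary} vertical lift of $s'$ over $x_1$, not the center of any valuation ring you have tied to $(x,B,A^{(2)})$. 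Finally, Lemma~\ref{criterion}~iv) is stated for affinoid $X$, so invoking it on $X'$ presupposes that $x_2$ and your putative center lie in a common affinoid open---precisely the locality you have not secured.

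The paper's proof circumvents all of this by manufacturing an affinoid in which to complete the square. After obtaining $d=s'\in S$, it picks an affinoid $S_i\ni d$ and affinoid opens $U\ni x_1$, $V\ni x_2$ inside $f'^{-1}(S_i)$; separatedness makes $U\cap V$ affinoid. The \emph{universal property} of the vertical compactification then yields morphisms $\overline{U\cap V}^{S_i}\to f'^{-1}(S_i)$ and $\overline{U}^{S_i}\to f'^{-1}(S_i)$. One lifts a valuation ring $(x,A)$ with center $x_2$ to $\overline{U\cap V}^{S_i}$, pushes to $\overline{U}^{S_i}$ to get $z'$, and then completes the square $(x,x_1,z')$ \emph{inside the affinoid} $\overline{U}^{S_i}$, where square completeness is automatic. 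The image in $X'$ of the completing point is the desired $x'$. In short, the paper never tries to lift valuated valuation rings along $f'$; instead it exploits the universal property to map affinoid relative closures into $X'$ and does the delicate horizontal-plus-vertical bookkeeping there.
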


\begin{proof}

Let $x_1, x$ and $x_2$ be points of $X'$ such that $x_1$ is a horizontal specialization and $x_2$ a vertical specialization of $x$. We consider $f'(x_1), f'(x)$ and $f'(x_2)$ and a point $d$ in $S$ which is a vertical specialization of $f'(x_1)$ and horizontal specialization of $f'(x_2)$. Such a point $d$ exists because $S$ is square complete. We choose an open affinoid neighbourhood $S_i$ of $d$. In $f'^{-1}(S_i)$ we choose open affinoid neighbourhood $U$ of $x_1$ and $V$ of $x_2$. Since the restriction map $f'^{-1}(S_i) \rightarrow S_i$ is vertically partially proper and $S_i$ is affinoid we get $U \cap V$ is affinoid. Furthermore, we get a unique map $\overline{U \cap V}^{S_i} \rightarrow f'^{-1}(S_i)$ and $\overline{U}^{S_i} \rightarrow f'^{-1}(S_i)$ such that the following diagram
\begin{center}
	\begin{tikzcd}
		
		& &  U \cap V \arrow[d, hookrightarrow]  \arrow[ddll, hookrightarrow] &  \\ 
		& & U \arrow[dl, hookrightarrow] \arrow[d] \arrow[dr, hookrightarrow] &  \\
		\overline{U \cap V}^{S_i} \arrow[r] \arrow[drr] & \overline{U}^{S_i} \arrow[dr] \arrow[r] & f'^{-1}(S_i) \arrow[d] \arrow[r, hookrightarrow] & X' \arrow[d] \\
		& & S_i \arrow[r, hookrightarrow] & S  \\
		
	\end{tikzcd}
\end{center}

is commutative. Let $(x,A)$ be a valuation ring which has $x_2$ as center. In particular, this valuation ring has $f'(x_2)$ as center on $S_i$ and we can lift it to a center $z$ of $(x,A)$ on $\overline{U \cap V}^{S_i}$. Then we consider the image $z'$ of $z$ under the map $\overline{U \cap V}^{S_i} \rightarrow \overline{U}^{S_i}$. In $\overline{U}^{S_i}$ we have $x_1$ is a horizontal specialization and $z'$ is a vertical specialization of $x$. Since affinoid spaces are square complete, we obtain a point $y$ in $\overline{U}^{S_i}$ such that $y$ is a vertical specialization of $x_1$ and horizontal specialization of $z'$. The commutativity of the diagram above ensures that the image of $y$ under the map $\overline{U}^{S_i} \rightarrow f'^{-1}(S_i)$ is a vertical specialization of $x_1$ and a horizontal specialization of $x_2$.

\end{proof}

The following definitions were part of Huber's original proof of the existence of a universal compactification. We present them here as a separate definition for improved readability.

\begin{definition} \label{cmap} Let $X \rightarrow S$ be a locally of $^+$weakly finite type and separated morphism of adic spaces with $S$ an affinoid adic space.
	We denote by $\mathcal{F}$ the set of all open affinoid subsets of $X$. For each $U \in \mathcal{F}$ we define 
	\begin{enumerate}[label=\roman*)]
		\item 

		\begin{enumerate}[label=(\arabic*)]
		\item $R(U) \coloneqq$ \{$A \in U_{\nu}$ \ | \ $A$ has a center on $S$\} = \{$A \in U_{\nu}$ \ | \ $A$ has a center on $U_c$ \} $\subset U_{\nu}$, 
		
		\item $S(U)\coloneqq \{A \in U_{\nu}$ \ | \ $A$ has a center on $X$\} $\subset R(U)$,
		
		\item $T(U) \subset R(U)$ denotes the set of all $(x,A) \in R(U)$ with the property, that for every valuation ring $B$ of $k(x)$ with $A \subset B \subset k(x)^+$, either the valuation ring $(x,B)$ of $U$ has no center on $X$ or $(x,B)$ has a center on $U$,
		\end{enumerate}
		where we consider $A \in U_{\nu}$ a valuation ring with support in $U$.
	
	\item the map $c_U \colon R(U) \rightarrow U_c$ which assigns to every $A \in R(U)$ the center of $A$ of $U_c$,
	
	\item $U_d \coloneqq c_U(T(U)).$

	\end{enumerate}

\end{definition}

The lemma below was also part of Huber's original proof, but we have modified part (ii). We avoided proving that 
$S(U)$ is pro-constructible. All other parts of the lemma are proven in the same way. We only carefully argued why all statements remain valid when considering exclusively vertical specializations.

\begin{lemma}\label{cmapprop}
	Let $X \rightarrow S$ be a locally of $^+$weakly finite type, separated and taut morphism between adic spaces where $S$ is an affinoid stable adic space. 
		We show that $c_U$ is well-defined, the equality of (1) in Definition \ref{cmap} and
	for every $U, V \in \mathcal{F}$ we have
	\begin{enumerate}[label=\roman*)]
		\item $c_U \colon R(U) \rightarrow U_c$ is surjective,
		\item $c_U(S(U))$ is a pro-constructible subset of the spectral topological space $U_{c}$,
		\item if $U \subset V$ then $T(U) \subset T(V)$,
		\item $U \cap V \in \mathcal{F}$ and $T(U \cap V) = T(U) \cap T(V)$,
		\item $T(U) = c^{-1}_U(c_U(T(U)))$,
		\item if $U \subset V$ then $T(U) = c^{-1}_V(c_V(T(U)))$,
		\item if $U \subset V$ then $R(U) \cap T(V) = c^{-1}_U(c_U(R(U) \cap T(V)))$.
	\end{enumerate}
\end{lemma}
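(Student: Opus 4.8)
The plan is to work through the seven claims in an order that lets later parts reuse earlier ones, relying throughout on Lemmas \ref{center}, \ref{criterion}, \ref{val}, Propositions \ref{relcl}, \ref{inj}, \ref{homeo}, and the description of $U_c$ from Theorem \ref{baby}. First I would settle the preliminary assertions: $c_U$ is well-defined because by Lemma \ref{criterion} ii) a valuated valuation ring on the affinoid space $U_c$ has \emph{at most} one center, and by Lemma \ref{criterion} i) an element $A \in R(U)$ — having a center on $S$, equivalently satisfying the inclusions $i(\mathcal{O}_S(S)) \subset A$ and $i(\mathcal{O}_S^+(S)) \subset A_\nu$ — automatically factors through $\mathcal{O}_{U_c}^+(U_c)$, the integral closure of $\mathcal{O}^+_S(S)[\mathcal{O}_U(U)^{\circ\circ}]$, since valuation rings are integrally closed; hence by Lemma \ref{criterion} i) again it has a (unique) center on $U_c$. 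This same argument gives the equality in Definition \ref{cmap}(1): a center on $S$ exists iff a center on $U_c$ exists. For \textbf{i)}, surjectivity of $c_U \colon R(U) \to U_c$, I would take $z \in U_c$ and use Proposition \ref{relcl} ii) to find a vertical generalization $x \in \phi(U)$ of $z$; by Lemma \ref{center} ii) there is a valuation ring $(x,A)$ with center $z$ on $U_c$, and since $U_c \to S$ is vertically partially proper (Theorem \ref{baby}) the image point of $z$ gives a center on $S$, so $(x,A) \in R(U)$ and $c_U(A) = z$.

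For \textbf{ii)}, rather than following Huber's route through pro-constructibility of $S(U)$, I would argue directly: $c_U(S(U))$ consists exactly of those points of $U_c$ that are vertical specializations of some point of $X$ and admit a center coming from $X$ — more precisely, by Lemma \ref{criterion} i) applied on the affinoid space $U$, $A \in S(U)$ iff the canonical maps factor through $\mathcal{O}_U(U)$ and $\mathcal{O}_U^+(U)$; the image $c_U(S(U))$ is then the set of centers on $U_c$ of such $A$, which one identifies with the image of the open immersion $\phi(U) \hookrightarrow U_c$ intersected with suitable closed conditions — here I would invoke that $\phi(U)$ is open and $U_c$ is spectral, and that being a center on $X$ is a closed (pro-constructible) condition on the valuation parameter, so that $c_U(S(U))$ is obtained from a pro-constructible set by the spectral map $c_U$ on the spectral space $U_\nu$ (which is pro-constructible in an adic spectrum). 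The main work is to make "being a center on $X$" into a pro-constructible condition and to check $c_U$ restricted there is spectral; this is where tautness of $X \to S$ enters.

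For the functoriality statements \textbf{iii)}–\textbf{vii)}, the key geometric input is Proposition \ref{inj}/Proposition \ref{homeo}: for $U \subset V$ the induced map $U_c \to V_c$ is a topological embedding, and it is the identity on $U$. For \textbf{iii)}: if $(x,A) \in T(U)$ with support $x \in U \subset V$, then for any valuation ring $B$ with $A \subset B \subset k(x)^+$, either $(x,B)$ has no center on $X$ (same statement in $U$ and $V$) or $(x,B)$ has a center on $U$, hence by Proposition \ref{cut}/Lemma \ref{val} a center on $V$; so $(x,A)\in T(V)$ — one must also check $R(U) \subset R(V)$, which holds because a center on $S$ is intrinsic. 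For \textbf{iv)}, $U \cap V$ is affinoid since $X \to S$ is separated (the diagonal is closed, so $U \times_X V = U \cap V$ is a locally closed hence affinoid piece — more precisely use quasi-separatedness and that $U,V$ are affinoid in $X$), and the equality $T(U\cap V) = T(U)\cap T(V)$ follows by running the defining condition through the three subspaces $U \cap V \subset U$, $U \cap V \subset V$ and using Lemma \ref{val} iii) to compare valuation rings of $k(x)$ across the inclusions. For \textbf{v)}: $T(U) \subseteq c_U^{-1}(c_U(T(U)))$ is trivial; conversely if $c_U(A) = c_U(A')$ with $A' \in T(U)$, then $A$ and $A'$ have the same center $z$ on $U_c$, and since $U_c$ is affinoid Lemma \ref{val} i)–iii) forces $A$ and $A'$ to be comparable and in fact to define the same center-data constraints on $X$, so $A \in T(U)$ — this is the place I expect to be the main obstacle, since one has to control all intermediate valuation rings $B$ simultaneously for $A$ and $A'$, using that they sandwich the same center. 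For \textbf{vi)} and \textbf{vii)} I would combine \textbf{v)} with the injectivity of $c_V$ on $c_V(T(U))$ (which comes from the topological embedding $U_c \hookrightarrow V_c$ together with \textbf{v)} applied in $V$ and the compatibility $c_V|_{R(U)}$ agreeing with $c_U$ followed by the embedding), reducing \textbf{vi)} to \textbf{v)}; \textbf{vii)} is the analogous statement with $R(U) \cap T(V)$ in place of $T(U)$, proved identically once one notes $R(U) \cap T(V) \subseteq R(U)$ so $c_U$ is the relevant map and the sandwiching argument of \textbf{v)} applies verbatim with "$T(V)$-membership" (a condition on valuation rings of $k(x)$, intrinsic to the support $x$) in place of "$T(U)$-membership". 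The hardest single step is \textbf{v)}: everything else is bookkeeping with Lemma \ref{val} and Proposition \ref{homeo}, but \textbf{v)} requires genuinely that two valuated valuation rings with the same center on the affinoid $U_c$ have the "same" lattice of intermediate valuation rings relevant to centering on $X$, which I would extract from Lemma \ref{val} i)+iii) (comparability forced by shared center) plus Lemma \ref{criterion} i) (the centering criterion on affinoids).
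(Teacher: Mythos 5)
Your outline of the easy parts (well-definedness of $c_U$, the equality in Definition \ref{cmap}(1), i), iii), and the deduction of vii)) matches the paper, but the two places where the real work of the lemma lives are not actually proved, and one of your proposed routes would not go through as stated. For ii), your plan is to make ``has a center on $X$'' into a pro-constructible condition on ``the spectral space $U_\nu$'' and then push forward along $c_U$; nothing in the paper (or in your sketch) supplies such a spectral structure on $U_\nu$ or the pro-constructibility of that condition, and you explicitly defer exactly this point (``the main work is to make \dots''). The paper's argument is quite different and concrete: tautness plus $S$ affinoid make $\overline{U}$ quasi-compact, so $\overline{U}$ is covered by finitely many $U_1,\dots,U_n\in\mathcal{F}$; writing $S(U)=\bigcup_i S_i$ with $S_i$ the valuation rings of $R(U)$ having a center on $U_i$, one identifies $c_{U_i}(S_i)=U_i\cap\alpha\bigl((U_i\cap U)_c\bigr)$ and transports this through the injective spectral maps $\alpha\colon (U_i\cap U)_c\to (U_i)_c$ and $\beta\colon (U_i\cap U)_c\to U_c$ of Proposition \ref{inj}, so that $c_U(S(U))$ is a finite union of pro-constructible sets. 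None of this appears in your proposal, and without it ii) is unproved.

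The second gap is v)/vi), which you correctly identify as the crux but do not prove. If $c_U(A)=c_U(A')$ (or $c_V(A)=c_V(A')$) with $(x',A')\in T(U)$, the supports $x$ and $x'$ need not be equal; they are only comparable vertical generalizations of the common center, so ``Lemma \ref{val} i)+iii) forces $A$ and $A'$ to be comparable'' is not meaningful as stated, since $A\subset k(x)$ and $A'\subset k(x')$ live in different residue fields. One must transfer the intermediate valuation rings $B$ between $k(x)$ and $k(x')$ using \cite[1.1.14 e)]{Hu1996} together with Lemma \ref{val} ii), and treat the two directions of specialization separately (the paper's cases a) and b), where b) additionally needs the auxiliary valuation ring $C$ with center $x'$ and the dichotomy $B\subset C$ or $C\subset B$); this also yields the non-obvious fact that the support $x$ lies in $U$ at all. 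Your proposed order (prove v) directly, deduce vi) from v) plus injectivity of the embedding) inherits this problem: applying v) via $c_U$ presupposes $(x,A)\in R(U)$, i.e.\ $x\in U$, which is part of what has to be shown. The paper goes the other way: it proves vi) by the case analysis above, obtains v) as the special case $V=U$, and gets vii) from v) applied to $V$ (as you do). Finally, in iv) the decisive step is that the centers on $U$ and on $V$ of the same $(x,B)$ coincide because $S$ is affinoid and $X\to S$ is vertically separated; Lemma \ref{val} iii), which you invoke instead, does not give this.
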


\begin{proof}
	
	Note that the equality in (1) in Definition \ref{cmap} is a consequence of $U_c \rightarrow S$ being a universally vertically specializing map which was already shown in the proof of Theorem \ref{baby} for the affinoid universal vertical compactification.
	The map $c_U$ is well-defined since a valuation ring on an affinoid adic space has at most one center.
	
	i) If $x \in U_c$ then by Proposition \ref{relcl} ii) there is a $y \in U$ which is a vertical generalization of $x$, thus due to Lemma \ref{criterion} there exists a valuation ring of $U$ which has $x$ as center.
	\bigskip
	\newline
	ii) Since $S$ is affinoid and $f$ is taut, we get that $X$ is also taut. Hence there exists a finite open affinoid cover $U_1,...,U_n$ of $\overline{U}$ with $U_1,...,U_n \in \mathcal{F}$. Note that by $(1, iv)$ we have $U_i \cap U \in \mathcal{F}$. Let $\alpha \colon (U_i \cap U)_c \rightarrow (U_i)_c$ and $\beta \colon (U_i \cap U)_c \rightarrow U_c$ be the morphisms which are induced by the restriction maps. By Proposition \ref{inj} we have $\alpha$ and $\beta$ are injective and spectral morphism. \\ Consider $S_i \coloneqq \{(x,A) \in R(U) \ | \ (x,A) \ \text{has a center on $U_i$} \}$. It is easy to see that $c_{U_i \cap U} (S_i) = \alpha^{-1}(c_{U_i}(S_i))$ and $c_U(S_i) = \beta(\alpha^{-1}(c_{U_i}(S_i)))$. We want to show $c_{U_i}(S_i)$ is pro-constructible, as then $c_U(S_i)$ is also pro-constructible by the equation before because $\beta$ and $\alpha$ are spectral mappings. This shows $c_U(S(U))$ is pro-constructible since it is a finite union of pro-constructible sets. It is clear that $c_{U_i}(S_i)$ is pro-constructible if $c_{U_i}(S_i) = U_i \cap \alpha((U_i \cap U)_c)$ holds: For the left to right inclusion consider some $z \in (U_i)_c$ which is a center of some $(x,A) \in S_i$, so $z \in U_i$. Further we have that $\alpha ((U_i \cap U)_c) \supset \alpha(c_{U_i \cap U} (S_i)) = c_{U_i}(S_i)$. Altogether this shows $c_{U_i}(S_i) \subset U_i \cap \alpha((U_i \cap U)_c)$. If $z$ is from the right set then it is the center of a valuation ring with support in $U_i \cap U$ which has a center on $U_i$, this shows the other inclusion.
	\bigskip
	\newline
	iii) Let $(x,A)$ be in $T(U)$ and suppose $(x,A) \notin T(V)$. Then there exists a valuation ring $A \subset B \subset k(x)^+$ such that $(x,B)$ has a center on $X$ and not on $V$. But this implies that $(x,B)$ has also no center on $U$ and we reach a contradiction.
	\bigskip
	\newline
	iv) The first statement follows because the intersection of two open affinoid subspaces is again open affinoid if $S$ is affinoid and $f$ is separated. Due to iii) it is clear that $T(U \cap V) \subset T(U) \cap T(V)$. For the other inclusion let $(x,A)$ be in $T(U) \cap T(V)$ and $A \subset B \subset k(x)^+$. Suppose $(x,B)$ has a center on $X$. Then it has a center on $U$ and a center on $V$. As $S$ is an affinoid adic space and $X \rightarrow S$ is vertically separated, the two centers must coincide.
	\bigskip
	\newline
	v) It follows from vi). 
	\bigskip
	\newline
	vi) Let $(x,A)$ be an element of $c^{-1}_V(c_V(T(U)))$ and suppose that $(x,A) \notin T(U)$. We have $c_V((x,A)) =c_V((x',A'))$ for some $(x',A') \in T(U)$. So those two valuation rings have a common center on $V_c$ and in particular $x$ and $x'$ are vertical generalizations of a common point. By  \cite[Lemma~3.6.16 ii)]{Hu1993} vertical generalizations of a point form a chain. Hence we are in one of the following cases:
	\bigskip
	\newline
	a) $x$ is a vertical specialization of $x'$: Consider the canonical morphism $i \colon k(x) \rightarrow k(x')$. Then Lemma \ref{center} iii) implies $i^{-1}(A') = A$. Let $(x,B)$ with $A \subset B \subset k(x)^+$ be a valuation ring which has a center on $X$ and not on $U$. By \cite[~1.1.14 e)]{Hu1996} there exists a valuation ring $B'$, which contains $A'$, such that $i^{-1}(B') = B$. By Lemma \ref{val} ii) we know that $B' \subset k(x')^+ $ and thus $(x',B')$ is a valuation ring of $X$. We get a contradiction, because $(x',B')$ has the same center as $(x,B)$, but the center is not on $U$ and $(x',A') \in T(U)$. In particular this shows $x \in U$.
	\newline
	b) $x'$ is a vertical specialization of $x$: Similarly to before we consider the canonical morphism $i: k(x') \rightarrow k(x)$ and we get $i^{-1}(A) = A'$. Let $C$ be a valuation ring with support $x$ such that $x'$ is a center of $(x,C)$. Since $i^{-1}(C)= k(x')^+$ and $i^{-1}(A) \subset k(x')^+$ we get $A \subset C$ again by Lemma \ref{val} ii). Suppose $A \subset B \subset k(x)^+$ is a valuation ring which has a center $w$ on $X$ and not on $U$. We have $B \subset C$ or $C \subset B$, because valuation rings which contain a common valuation ring are totally ordered. If $C \subset B$, then Lemma \ref{criterion} i) yields that $B$ has a center on $U$, which cannot be. Now let $B \subset C$. By Lemma \ref{val} i)  $w$ is a vertical specialization of $x'$. Then there exists a valuation ring $(x',C')$ which has center $w$. Since $A' \cap k(w) = (A \cap k(x)) \cap k(w) = A \cap k(w)  \subset B \cap k(w) = k(w)^+ = C' \cap k(w),$ we get $A' \subset C'$, which is a contradiction since $(x',A') \in T(U)$ and $(x',C')$ has a center on $X$ and not on $U$. 
	\bigskip
	\newline
	vii) Let $(x,A) \in c^{-1}_U(c_U(R(U) \cap T(V)))$, especially $(x,A) \in R(U)$ and $(x,A) \in c^{-1}_V(c_V(T(V)))$. By Lemma \ref{cmapprop} v) we have $(x,A) \in T(V)$ and this shows vii). 
	
\end{proof}

\begin{lemma}\label{horizontal}
	Let $X \rightarrow S$ be a morphism between adic spaces, where $S$ is an affinoid space and $X$ is a weakly square complete adic space. Further let $U$ be an open affinoid subset of $X$ and $z, h \in U_c$ such that $z$ is a horizontal specialization of $h$ and there exists a valuation ring $(x_z,A)$ with support in $U$ and center $z$ and we assume that for every valuation ring $ A \subset B \subset k(x_z)^+$ the valuation ring $B$ has no center on $X$ or it has a center on $U$. Then there exists a valuation ring $(x_h,A')$ on $U$ with center $h$ which has the same property as $(x_z,A)$.
\end{lemma}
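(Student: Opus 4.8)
The plan is to use the weak square completeness of $X$ to produce a point of $U_c$ that will serve as the support/center data for the desired valuation ring $(x_h, A')$, and then Lemma~\ref{center} iii) together with Lemma~\ref{val} to transport the ``no center on $X$ or center on $U$'' property along the vertical specialization that connects the supports. First I would pick a vertical generalization $x_z \rightsquigarrow \tilde x$ with $\tilde x \in U$ (using Proposition~\ref{relcl} ii)), so that $x_z$ and the support of any valuation ring over $h$ coming from $U$ both specialize from points of $U$; the key geometric input is that, since $z \rightsquigarrow h$ is horizontal in $U_c$ and $z$ is a vertical specialization of $\tilde x$, weak square completeness of $X$ (applied after lifting to $U$, which is open in $X$) yields a point $x_h \in U_c$ that is a vertical generalization of $h$ and a horizontal specialization of $\tilde x$. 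One then takes $A'$ to be the valuation ring with support $x_h$ whose center on $U_c$ is $h$ — this exists and is unique by Lemma~\ref{center} iii) since $h$ is a vertical specialization of $x_h$.

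Next I would verify the compatibility between $(x_z, A)$ and $(x_h, A')$. Because $z$ and $h$ share the common specialization structure and $x_z, x_h$ both vertically generalize the appropriate points in $U$, I would arrange that $x_z$ is a vertical specialization of $x_h$ (or vice versa) — here is where the chain property of vertical generalizations, \cite[Lemma~3.6.16 ii)]{Hu1993}, and the precise square-completeness diagram are used to pin down the orientation. With the canonical map $i \colon k(x_h) \to k(x_z)$ in hand (or its reverse), Lemma~\ref{center} iii) gives $i^{-1}(A) = A'$ (resp.\ $i^{-1}(A') = A$), so $A$ and $A'$ are genuinely ``the same valuation ring viewed on the two supports.''

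The heart of the argument — and the step I expect to be the main obstacle — is transferring the hypothesis that every valuation ring $A \subset B \subset k(x_z)^+$ has no center on $X$ or has a center on $U$, to the analogous statement for $A' \subset B' \subset k(x_h)^+$. Given such a $B'$, I would use \cite[1.1.14 e)]{Hu1996} to produce the corresponding $B$ on the other support (pushing forward or pulling back along $i$), apply Lemma~\ref{val} ii) to control containments of the $k$-intersections, and then invoke Lemma~\ref{criterion} i) (the center criterion on affinoid spaces) together with Lemma~\ref{val} i) to convert ``center on $X$'' statements on one support into the other. The subtlety is exactly the bookkeeping that appears in the proof of Lemma~\ref{cmapprop} vi): one must handle separately the cases $B \subset C$ and $C \subset B$ for an auxiliary valuation ring $C$ realizing the relevant center, using that valuation rings containing a common one are totally ordered, and use vertical separatedness of $X \to S$ to force centers to coincide. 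Once that case analysis is complete, the resulting $(x_h, A')$ has the required property, finishing the proof.
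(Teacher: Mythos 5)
Your argument has two genuine gaps, and the second one is fatal to the approach. First, weak square completeness of $X$ (Definition \ref{weaker}) is the wrong tool for producing $x_h$: it only fills in the lower-left corner of a specialization square, i.e.\ it produces a common \emph{specialization}, whereas here you need a point that is simultaneously a vertical \emph{generalization} of $h$ and a horizontal \emph{generalization} of $x_z$; moreover $z$ and $h$ lie in $U_c$ and need not lie in $X$ at all, so square completeness of $X$ cannot even be invoked for them. (The paper obtains $x_h$ from the fact that $U_c$ is affinoid, via \cite[Lemma~1.1.15~iv)]{Hu1993}; also note $x_z\in U$ already by hypothesis, so your auxiliary point $\tilde x$ is unnecessary.) This step is repairable. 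What is not repairable is the core of your comparison: you assume $x_z$ and $x_h$ are vertically comparable and work with a canonical residue-field map $i\colon k(x_h)\to k(x_z)$ with $i^{-1}(A)=A'$. In fact $x_z$ is a \emph{horizontal} specialization of $x_h$ (they are vertical generalizations of the horizontally related points $z$ and $h$, not of a common point), so no such $i$ exists and Lemma \ref{center}/Lemma \ref{val} cannot be applied as you describe. The comparison must pass through the specialization fields: the horizontal specialization gives $\phi\colon k(x_z)^{\succ}\to k(x_h)^{\succ}$ (Remark \ref{spec. field}~ii)), and a ring $A'\subset B\subset k(x_h)^+$ is transported to $\pi_z^{-1}(\phi^{-1}(\pi_h(B)))\subset k(x_z)^+$, using $\pi_h^{-1}(\pi_h(B))=B$ (Remark \ref{spec. field}~iii)). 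This mechanism is the actual content of the lemma and is absent from your sketch.

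Second, you never use weak square completeness where it is genuinely needed. The hard point is to show that the transported ring $\pi_z^{-1}(\phi^{-1}(\pi_h(B)))$ has a center on $X$ (and none on $U$) given that $(x_h,B)$ has a center on $X$ but not on $U$: that center may lie far from $U$, and to apply the affinoid center criterion (Lemma \ref{criterion}~i)) simultaneously at the two horizontally related supports one needs a single open affinoid $V\subset X$ containing $x_z$, $x_h$ and this center. That is exactly what weak square completeness of $X$ supplies (a common specialization of these points; any affinoid neighbourhood of it contains all of them, since open subsets are stable under generalization). Your replacement — the case analysis ``as in Lemma \ref{cmapprop}~vi)'' with $B\subset C$ or $C\subset B$ — concerns vertically comparable supports inside one affinoid and does not address this; in addition, your appeal to vertical separatedness of $X\to S$ uses a hypothesis that Lemma \ref{horizontal} does not have. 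As written, the transfer of the ``no center on $X$ or center on $U$'' property, which is the heart of the lemma, does not go through.
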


\begin{proof}
	
	Since $U_c$ is an affinoid space, by \cite[Lemma ~ 1.1.15 iv)]{Hu1993} there exists a $x_h$ such that $x_h$ is a vertical generalization of $h$ and a horizontal generalization of $x_z$. Note $x_h \in U$ since $x_z$ is it too. Let $A'$ be a valuation ring with support $x_h$ and center $h$. We show $(x_h,A')$ is the valuation ring which we are looking for. Let us assume there exists a valuation ring $A' \subset B \subset k(x_h)^+$ which has a center on $X$ but not on $U$. We consider the following diagram:
	
	\begin{center}
		\begin{tikzcd}
			k(x_z)^+ \arrow[r, "\pi_z"] & k(x_z)^{\succ} \arrow[r, "\phi"] & k(x_h)^{\succ} & k(x_h)^+ \arrow[l, "\pi_h"] \\
			k(z)^+ \arrow[u, "\iota_z"] \arrow[r, "\tau_z"] & k(z)^{\succ} \arrow[r, "\psi"] & k(h)^{\succ} & k(h)^+ \arrow[l, "\tau_h"] \arrow[u, "\iota_h"] 
		\end{tikzcd}
	\end{center}
	
	The proof is done, if we show 
	\begin{align*}
		(\phi \circ \pi_z )(A) \subset \pi_h (A').
	\end{align*}
	Firstly, let us discuss why it is enough to show this inclusion. From this inclusion it follows that $A \subset \pi_z^{-1}(\phi^{-1}(\pi_h(A'))) \subset \pi_z^{-1}(\phi^{-1}(\pi_h(B)))  \subset k(x_z)^+$. We show $(x_z,  \pi_z^{-1}(\phi^{-1}(\pi_h(B)))$ has a center on $X$. We know that $(x_h,B)$ has a center on $X$. Since $X$ is square complete, there is an open affinoid neighbourhood $V \coloneqq$ Spa$(R,R^+)$ which contains $x_z, x_h$ and the center of $(x_h,B)$.  The following diagram 
	
	\begin{tikzcd}
		&	k(x_z)^{\succ} \arrow[rr, "\phi"] & & k(x_h)^{\succ} & \\
		\pi_z^{-1}(\phi^{-1}(\pi_h(B))) \arrow[r, hook] \arrow[ur] &	k(x_z)^+ \arrow[u, "\pi_z"] & & k(x_h)^+ \arrow[u, "\pi_h"] & B \arrow[l, hook'] \arrow[ul]  \\
		&	\mathcal{O}^+_{V,x_z} \arrow[u] \arrow[rr] & & \mathcal{O}^+_{V,x_h} \arrow[u] & \\
		&	& \mathcal{O}^+_V(V) \arrow[ul] \arrow[uull, dashed, bend left] \arrow[uurr, dashed, bend right] \arrow[ur]  & & \\
	\end{tikzcd} 
	
	commutes. Note that we get the right dashed arrow because $(x_h,B)$ has a center on $V$. We get the left dashed arrow, because the middle diagram commutes and because of the right dashed arrow. We show $(x_z, \pi_z^{-1}(\phi^{-1}(\pi_h(B))))$ has no center on $U$. Let us assume otherwise. We have to consider the same diagram as above, but we replace the stalks on $V$ by the stalks on $U$ and $\mathcal{O}^+_V(V)$ by $\mathcal{O}^+_U(U)$. Now we get first the left dashed arrow by assumption. Similarly since the middle diagram commutes and because of the left dashed arrow, we get the right dashed arrow. Note that $B = \pi_z^{-1}(\pi_z(B))$ by Remark \ref{spec. field} iii). This implies $(x_h,B)$ has a center on $U$ which is a contradiction. So it is indeed enough to show the inclusion above. To show this inclusion it is enough to show $(\phi \circ\pi_z (\iota_z((k(z)^+))) \subset \pi_h(\iota_h(k(h)^+))$. Then $\iota_z(k(z)^+)  \subset \pi_z^{-1}(\phi^{-1}(\pi_h(\iota_h(k(h)^+)))) \subset \pi_z^{-1}(\phi^{-1})(\pi_h(A')).$ The last set is clopen in $k(x_z)^+$. Because $z$ is a center of $A$, we have $\iota_z(k(z)^+) = A \cap \iota_z(k(z))$. Hence by a similar argument as in Lemma \ref{val} ii) we have $A \subset \pi_z^{-1}(\phi^{-1}(\pi_h(A')))$, which shows the desired inclusion. So it is left to show $(\phi \circ\pi_z (\iota_z((k(z)^+))) \subset \pi_h(\iota_h(k(h)^+))$.  This is obvious by observing the following commutative diagram
	
	\begin{center}
		\begin{tikzcd}
			k(x_z)^{\succ} \arrow[rr, "\phi"] & &  k(x_h)^{\succ} & &\\
			&	k(z)^+ \arrow[ul, "\pi_z \circ \iota_z"] & &   k(x_h)^+ \arrow[ul, "\pi_h"] & \\ 
			& & & \mathcal{O}^+_{U_c,x_h} \arrow[u, twoheadrightarrow] & \\
			&  \mathcal{O}^+_{U_c,z} \arrow[uu, twoheadrightarrow] \arrow[rr]  & & \mathcal{O}^+_{U_c,h} \arrow[r, twoheadrightarrow] \arrow[u] & k(h)^+.  \arrow[uul, bend right, "\iota_h"] \\
		\end{tikzcd}
	\end{center}
	
\end{proof}

The lemma below is restated as part of Huber's original proof, but here we provide additional proofs in (ii), (iii), and (v) to cover the case of horizontal specializations. In particular, the proof of (ii) relies on the lemma stated above.

\begin{lemma}\label{openset} Let $X \rightarrow S$ be a locally of $^+$weakly finite type, separated and taut morphism of adic spaces with $X$ weakly square complete and $S$ an affinoid stable adic space.
	 For $U,V \in \mathcal{F}$ with $U \subset V$, let $\psi_{V,U}: U_c \rightarrow V_c$ be the morphism of adic spaces which is induced by the restriction mapping $(\mathcal{O}_X(V),I(V)) \rightarrow (\mathcal{O}_X(U),I(U)).$ Then
	\bigskip

	 For $U,V \in \mathcal{F}$ with $U \subset V$ we have
	\begin{enumerate}[label=\roman*)]
		\item $U \subset U_d$, 
		\item $U_d$ is open in $U_c$,
		\item $\psi_{V,U}(U_d) \subset V_d$ and $\psi_{V,U} \colon U_d \rightarrow V_d$ is an open embedding of adic spaces,
		\item $\psi_{V,U}(U_d \backslash U)) \subset V_d \backslash V$,
		\item $\psi_{V,U}(U_d \backslash U )$ is closed under specialization in $V_c$. 
	\end{enumerate}
	
\end{lemma}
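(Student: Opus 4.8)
The plan is to establish (i)--(v) in this order, reproducing Huber's arguments for the vertical phenomena and adding, at each place where a horizontal specialization intervenes, an argument based on Lemma~\ref{horizontal} and on the square completeness of affinoid spaces. For (i) I would argue as follows: by Lemma~\ref{cmapprop}~i) the map $c_U\colon R(U)\to U_c$ is surjective, so for $x\in U$ there is $(\xi,A)\in R(U)$ with $c_U((\xi,A))=x$. Recall that the center of a valuation ring of $U$ on $U_c$ lies in $U$ exactly when that valuation ring has a center on $U$ (this uses Proposition~\ref{relcl}~iii), which identifies the stalks of $\mathcal{O}_{U_c}$ and $\mathcal{O}_U$ over points of $U$). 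Now if $A\subseteq B\subseteq k(\xi)^+$ and $(\xi,B)$ has a center on $X$, its center on $U_c$ is a generalization of $x$, hence lies in $U$ because $U$ is open in $U_c$ (Proposition~\ref{relcl}~i)) and thus stable under generalization; so $(\xi,B)$ has a center on $U$. This is exactly the condition $(\xi,A)\in T(U)$, whence $x=c_U((\xi,A))\in U_d$.

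For (ii) the key point is a reformulation of membership in $T(U)$: writing $P(U)$ for the set of valuation rings of $U$ admitting a center on $U$ (so $c_U(P(U))=U$), one has, for $(\xi,A)\in R(U)$,
\[ (\xi,A)\in T(U)\iff [A,k(\xi)^+]\cap S(U)\subseteq P(U), \]
where $[A,k(\xi)^+]$ is the chain of valuation rings of $k(\xi)$ between $A$ and $k(\xi)^+$; moreover $c_U$ identifies $[A,k(\xi)^+]$ order-reversingly with the set of vertical generalizations of $c_U((\xi,A))$ in $U_c$, and along that chain $S(U)$ and $P(U)$ are up-sets containing $k(\xi)^+$. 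Enlarging $A$ only weakens the displayed condition, so if $y\in U_d$ and $y''$ is a vertical generalization of $y$, realizing $y$ and $y''$ as centers of valuation rings with common support the generic vertical generalization of $y$ shows $y''\in U_d$; thus $U_d$ is closed under vertical generalization. Closure of $U_d$ under horizontal generalization is precisely Lemma~\ref{horizontal}. Since a specialization in $U_c$ factors as a vertical one followed by a horizontal one, $U_d$ is closed under generalization and $U_c\setminus U_d$ is closed under specialization. It then suffices to see $U_c\setminus U_d$ is pro-constructible: using the reformulation and $c_U(P(U))=U$, one identifies $U_c\setminus U_d$ with the intersection of the closed set $U_c\setminus U$ with the set of points of $U_c$ having a vertical generalization in $c_U(S(U))\setminus U$, and the latter is pro-constructible because $c_U(S(U))$ is (Lemma~\ref{cmapprop}~ii)) and because vertical specializations of a pro-constructible set form a pro-constructible set. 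A pro-constructible set closed under specialization is closed, so $U_d$ is open.

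For (iii): from $\psi_{V,U}\circ c_U=c_V$ on $R(U)$ and $T(U)\subseteq T(V)$ (Lemma~\ref{cmapprop}~iii)) we get $\psi_{V,U}(U_d)=c_V(T(U))\subseteq c_V(T(V))=V_d$. By Proposition~\ref{inj} the map $\psi_{V,U}$ is injective and spectral, and by Proposition~\ref{homeo} it is a topological embedding; by (ii) it therefore restricts to a homeomorphism of $U_d$ onto $\psi_{V,U}(U_d)$, and openness of the image in $V_d$ follows by transporting the openness of $U_d\subseteq U_c$ through the embedding using the identities $T(U)=c_V^{-1}(c_V(T(U)))$ and $R(U)\cap T(V)=c_U^{-1}(c_U(R(U)\cap T(V)))$ of Lemma~\ref{cmapprop}~vi),vii). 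For (iv): let $y\in U_d\setminus U$ and choose $(\xi,A)\in T(U)$ with $c_U((\xi,A))=y$. Since $y\notin U$ we have $(\xi,A)\notin P(U)$, so the case $B=A$ of the displayed criterion forces $(\xi,A)\notin S(U)$; that is, $(\xi,A)$ has no center on $X$, hence none on $V$. Therefore $(\xi,A)\notin P(V)$, so $\psi_{V,U}(y)=c_V((\xi,A))\notin V$, and together with $\psi_{V,U}(U_d)\subseteq V_d$ this gives $\psi_{V,U}(U_d\setminus U)\subseteq V_d\setminus V$.

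Finally (v). Since $V_c$ is affinoid, every specialization in $V_c$ factors as a vertical specialization followed by a horizontal one (cf.\ \cite[Lemma~1.1.15]{Hu1993}), so it suffices to treat these two cases. In the vertical case one uses that $\psi_{V,U}$ induces a homeomorphism of the vertical tube of a point of $U$ in $U_c$ onto the corresponding tube in $V_c$ (a valuation ring with support in $U$ has a center on $U_c$ precisely when it has one on $S$, equivalently when it has one on $V_c$), together with the fact that $U_d\setminus U$ is closed under vertical specialization in $U_c$: if $y=c_U((\eta,A))\in U_d\setminus U$ with $\eta$ the generic vertical generalization of $y$, then $(\eta,A)\notin S(U)$ as in (iv), which by the criterion of (ii) forces the minimal elements of $S(U)$ and of $P(U)$ along the chain of valuation rings with support $\eta$ to coincide, whence $(\eta,A')\in T(U)$ for every $A'\subseteq A$ with support $\eta$; so every vertical specialization of $y$ lies in $U_d$, and it avoids $U$ because $U$ is stable under generalization in $U_c$. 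The horizontal case is where the present generality genuinely departs from Huber's setting, and I expect it to be the main obstacle: given $y\in\psi_{V,U}(U_d\setminus U)$ and a horizontal specialization $y'$ of $y$ in $V_c$, one must produce a valuation ring of $U$ with the $T(U)$-property whose associated center is $y'$. I would obtain it by first invoking square completeness of $V_c$ to produce a vertical generalization of $y'$ lying over a point of $U$, and then running the transfer-of-center argument of Lemma~\ref{horizontal} verbatim on the valuated valuation rings attached to $y$ and $y'$ to check that the $T(U)$-property persists; that $y'\notin V$ again follows from $V$ being open in $V_c$, so $y'\in\psi_{V,U}(U_d\setminus U)$, as required.
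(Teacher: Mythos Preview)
Your overall structure matches the paper's, and (i), (iv), and the vertical half of (v) are essentially fine. But (iii) has a genuine gap: knowing that $\psi_{V,U}\colon U_c\hookrightarrow V_c$ is a topological embedding (Proposition~\ref{homeo}) only tells you that $\psi_{V,U}(U_d)$ is open in the image $\psi_{V,U}(U_c)$, not in $V_c$ or in $V_d$. The identities of Lemma~\ref{cmapprop}~vi),~vii) do not close this gap --- in particular they do not give $\psi_{V,U}(U_d)=\psi_{V,U}(U_c)\cap V_d$, which is false in general (a valuation ring can lie in $R(U)\cap T(V)$ without lying in $T(U)$: it may have overrings with center on $V$ but not on $U$). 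The paper instead proves a second complement formula
\[
\psi_{V,U}(U_d)=V_c\setminus s\bigl(c_V(S(V))\setminus U\bigr),
\]
parallel to the formula you derive for $U_d$ in (ii), and concludes openness from pro-constructibility of $c_V(S(V))$; this requires genuinely rerunning the (ii)-argument inside $V_c$, including the horizontal step via Lemma~\ref{horizontal}. You also omit the verification that the embedding is one of \emph{adic spaces}, which the paper extracts from the sheaf identification of Proposition~\ref{relcl}~iii).

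Two smaller points. In (ii), the claim that vertical specializations of a pro-constructible set are pro-constructible is not a standard fact and you do not justify it; fortunately your own argument already yields the paper's formula $U_c\setminus U_d=s(c_U(S(U))\setminus U)$ once you combine ``$U_c\setminus U_d$ is closed under all specializations'' with your inclusion $U_c\setminus U_d\subseteq s_v(c_U(S(U))\setminus U)$, and that set is closed as the specialization-closure of a pro-constructible set --- so drop the vertical claim and conclude directly. In (v), the horizontal case is the hardest part of the lemma and cannot be dispatched by ``run Lemma~\ref{horizontal} verbatim'': the paper needs a case split on whether a putative bad overring $B$ satisfies $B\subseteq\pi_z^{-1}(\phi^{-1}(\pi_h(A')))$ or the reverse inclusion, transferring centers back and forth between the supports $x_z$ and $x_h$ via the diagrams of Lemma~\ref{horizontal} in each case to contradict $(x_h,A')\in T(U)$.
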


\begin{proof}
	i) For every $x \in U$, the center of $(x, k(x)^+)$ is $x$ and $(x,k(x)^+) \in T(U)$.
	
	ii) Let $s(c_U(S(U)) \backslash U)$ denote the set of the specializations of the points of $c_U(S(U)) \backslash U$ in $U_c$.
	
	Then
		\begin{align}	
			U_d = U_c \backslash s(c_U(S(U)) \backslash U).	\tag{*} \label{U_d}
		\end{align}

	Let $z$ be an element of $ U_c \backslash s(c_U(S(U)) \backslash U)$. By Lemma \ref{cmapprop} i) the map $c_U$ is surjective and we can assume that $z =c_U((x,A))$ for some $(x,A) \in R(U)$. Suppose $(x,A) \notin T(U)$. Then there exists a valuation ring $A \subset B \subset k(x)^+$ which has a center on $X$ and not on $U$. Thus $ z' \coloneqq c_U((x,B)) \in c_U(S(U)) \backslash U$. By Lemma \ref{val} i) we get that $z$ is a specialization of $z'$ what gives the desired contradiction. For the other inclusion let $z$ be in $U_d$ and suppose $z \in s(c_U(S(U))\backslash U)$. Then $z$ is a specialization of a point $y \in c_U(S(U)) \backslash U$. Let $h \in U_c$ be a horizontal generalization of $z$ and a vertical specialization of $y$. By Lemma \ref{horizontal} we have $h \in U_d$. We consider again $y$ which is a center of a valuation ring $(x,A)$ which has a center on $X$ and not on $U$. By Lemma \ref{center} ii) we get a valuation ring $(x,B)$ which has $h$ as center and it follows immediately by the proof of this lemma that $B \subset A$. Lemma \ref{cmapprop} v) implies $(x,B) \in T(U)$ and that is a contradiction, because $A$ is a valuation ring which contains $B$ and has a center on $X$ and not on $U$.  
	By Lemma \ref{cmapprop} ii) we have $c_U(S(U))$ pro-constructible. Since $U$ is a constructible subset of $U_c$ by Proposition \ref{relcl} i), we obtain $c_U(S(U)) \backslash U$ is a pro-constructible subset of $U_c$. Then by \cite[Proposition~3.30 (2)]{Wed19} the subspace $s(c_U(S(U))\backslash U)$ is closed in $U_c$ and \eqref{U_d} implies $U_d$ is open in $U_c$.
	
	iii) By Lemma \ref{cmapprop} iii) the inclusion $\psi_{V,U}(U_d) \subset V_d$ holds.  Similarly as for \eqref{U_d} let $s(c_V(S(V)) \backslash U)$ $\subset V_c$ denote the set of the specializations of the points of $c_V(S(V) \backslash U)$ in $V_c$. We show 
	\begin{align}
	 \psi_{V,U}(U_d) = V_c \backslash s(c_V(S(V)) \backslash U). \tag{*'} \label{imU_d}
	\end{align}
	
	If $z$ is from the right set, then $z$ must be a vertical specialization of a $x \in U$. Let $(x,A)$ be a valuation ring with center $z$. By the same argument as in $(3)$ we get $(x,A) \in T(U)$. Further by the same argument as in Proposition \ref{homeo} we get $\psi_{V,U}(z') = z$ for a $z' \in U_d.$ Now we consider $z$ as an element from the set on the left in \eqref{imU_d}. If $h$ is a horizontal generalization of $z$, we can show in a similar way as like in Lemma \ref{horizontal} that every valuation ring $(x,A)$ with support in $U$ and center $h$ is an element of $T(U)$. Hence we can assume $z$ is a vertical specialization of a element of $s(c_V(S(V) \backslash U))$.  This will lead to a contradiction like in \eqref{U_d} by using Lemma \ref{cmapprop} vi) this time. 
	\newline
	
	This shows that $\psi_{V,U}(U_d)$ is open in $V_d$. But this also shows $\psi_{V,U}: U_d \rightarrow V_d$ is an open embedding of topological spaces because by Proposition \ref{homeo} the map $\psi_{V,U} \colon U_c \rightarrow V_c$ is a homeomorphism onto its image. Now we consider $U_d$ as open subspace of $V_d$. Let $W$ be an open subset of $\psi_{V,U}(U_d)$. We have $\psi^{-1}_{V,U}(W) \cap U = W \cap U = W \cap V$, where the first equality holds due to $\psi_{V,U}$ being the identity on $U$ and the second equality follows by (iv).  It follows that $\mathcal{O}_{U_c}(\psi^{-1}_{V,U}(W)) \cong \mathcal{O}_U(\psi^{-1}_{V,U}(W) \cap U) \cong \mathcal{O}_V (W \cap V) \cong \mathcal{O}_{V_c}(W)$, where the first and last isomorphism follows by Proposition \ref{relcl}. This shows that $\psi_{V,U}:U_d \rightarrow V_d$ is an open embedding of adic spaces.
	
	iv) This follows by definition of $U_d$ and $V_d$ and since $\psi_{V,U}$ on $U$ is just the identity. 
	
	v) It is enough to show that $\psi_{V,U}(U_d \backslash U)$ is closed under vertical and horizontal specializations in $V_c$. Note that we will consider $U_d$ as open subspace of $V_d$ to save some notation. Otherwise to show that the point will really be in the image is always a similar argument as in Proposition \ref{homeo} which we used before. First we assume that $z \in V_c$ is a vertical specialization of some $y \in U_d \backslash U$. Since $y \in U_d$, there exists some $(x,A) \in T(U)$ such that $c_U((x,A)) = y$ and in particular $y$ is a vertical specialization of $x$. This yields that $z$ is a vertical specialization of $x$. Using Lemma \ref{center} iii) we find a valuation ring $(x,B)$ with center $z$. Observe $$k(z) \cap B = k(z)^+ \subset k(y)^+ = k(y) \cap A \subset A.$$ Lemma \ref{val} implies $B \subset A$. It suffices to show that $(x,B) \in T(U)$. Consider a valuation ring $C$ with $B \subset C \subset k(x)^+$ with $(x,C)$ has a center on $X$ but not on $U$. Since $C$ and $A$ contain $B$, we get $C \subset A$ or $A \subset C$. The scenario $C \subset A$ cannot hold, because if $(x,C)$ has a center on $X$ then $(x,A)$ has also a center on $X$. This center is not in $U$, otherwise $y \in U$. The case $A \subset C$ can also not hold because $(x,A) \in T(U)$. This shows that such a $C$ cannot exist, $(x,B) \in T(U)$ and $z \in U_d \backslash U$.
	
	Now let $z$ be a horizontal specialization of some $h \in U_d \backslash U$. We choose a valuation ring $(x_z,A)$ of $U$ which has $z$ as center. Using \cite[Lemma ~ 1.1.15 iv)]{Hu1993} we get $x_h \in U$ which is a horizontal generalization of $x_z$ and a vertical generalization of $h$. Let $(x_h,A')$ be a valuation ring with center $h$. By Lemma \ref{cmapprop} vi) we have $(x_h,A') \in T(U)$. We consider the same diagrams as in Lemma \ref{horizontal} and keep the same notation. From the proof of this lemma we know that $A \subset \pi^{-1}_z(\phi^{-1}(\pi_h(A')))$. We want to show that $(x_z,A) \in T(U)$. Assume the opposite and let $A \subset B \subset k(x_z)^+$ be a valuation ring such that $(x_z, B)$ has a center on $X$ but not on $U$. We denote the open affinoid neighbourhood of this center with $W$. Note that $W$ will play the role of $V$ in Lemma \ref{horizontal}. Since $B$ and $\pi^{-1}_z(\phi^{-1}(\pi_h(A')))$ contain a common valuation ring we can distinguish the following cases:
	
	First we assume $B \subset \pi^{-1}_z(\phi^{-1}(\pi_h(A')))$. By \cite[Remark ~ 1.1.14 e)]{Hu1996} there exists a valuation ring $B' \subset A'$ with $B = \pi^{-1}_z (\phi^{-1}(\pi_h(B')))$. We consider the second diagram in Lemma \ref{horizontal} where we have on the outer left side $\pi^{-1}_z (\phi^{-1}(\pi_h(B')))$ and on the outer right side $B'$ . Since $(x_z,B)$ has a center on $V$ we get first the left dashed arrow. By a similar argument as in this lemma, we get the right dashed arrow. This implies $(x_h,B')$ has a center on $V$ and thus $(x_h,A')$ has a center on $V$. The center of $(x_h,A')$ is not in $U$ because $h$ is not in $U$. This is a contradiction to $(x_h,A') \in T(U)$.
	
	Now we consider the case where $\pi^{-1}_z(\phi^{-1}(\pi_h(A'))) \subset B$. Again by \cite[Remark ~ 1.1.14 e)]{Hu1996} this time we get a valuation ring $B'$ which contains $A'$ and is contained in $k(x_h)^+$ such that $B = \pi^{-1}_z(\phi^{-1}(\pi_h(B')))$. Again, like in the other case, we get that $(x_h,B')$ has a center on $X$. This center is not on $U$, otherwise we argue again like in the Lemma \ref{horizontal} that $B$ will have a center on $U$. But this is a contradiction by the choice of $B$. In particular, this shows that $x_z$ is in $U$ because $(x_z,k(x_z)^+)$ has a center on $X$ and thus it is in $U$. This finish the proof of (v).
\end{proof}

Again, the following lemma is not new, but we have modified the proof by avoiding the use of the fact that 
$S(U)$ is pro-constructible. Instead, we use directly that $X$ is taut. 

\begin{lemma}\label{minimal}
	Let $X$ be a taut adic space. Then there exists a valuation ring $B$ of $k(x)$ such that $A \subset B \subset k(x)^+$ and $(x,B)$ has a center on $X$ and for every valuation ring $C$ of $k(x)$ with $A \subset C \subsetneq B$ the valuation ring $(x,C)$ of $X$ has no center on $X$.
\end{lemma}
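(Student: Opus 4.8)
The plan is to obtain $B$ as a minimal element of the partially ordered set $\mathcal{B}$ consisting of all valuation rings $B$ of $k(x)$ with $A\subseteq B\subseteq k(x)^{+}$ for which $(x,B)$ has a center on $X$. Since the valuation rings of $k(x)$ containing the fixed valuation ring $A$ form a chain under inclusion, $\mathcal{B}$ is totally ordered, and it is non-empty because $k(x)^{+}\in\mathcal{B}$, the center of $(x,k(x)^{+})$ being $x$. Such a minimal $B$ is exactly what the lemma asks for: any valuation ring $C$ with $A\subseteq C\subsetneq B$ automatically satisfies $C\subseteq k(x)^{+}$, so if $(x,C)$ had a center on $X$ it would lie in $\mathcal{B}$ strictly below $B$, contradicting minimality. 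By the dual form of Zorn's lemma — a non-empty poset in which every non-empty chain has a lower bound has a minimal element — it therefore suffices to prove that every non-empty subchain $\mathcal{C}\subseteq\mathcal{B}$ has a lower bound in $\mathcal{B}$, the natural candidate being $B^{\ast}\coloneqq\bigcap_{B\in\mathcal{C}}B$. An intersection of a chain of valuation rings of $k(x)$ is again one, so $B^{\ast}$ is a valuation ring of $k(x)$ with $A\subseteq B^{\ast}\subseteq k(x)^{+}$, and the one thing left to verify is that $(x,B^{\ast})$ has a center on $X$.

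To check this I would bring in tautness. For each $B\in\mathcal{C}$ fix a center $z_{B}$ of $(x,B)$ on $X$; by Lemma~\ref{center}~i) it is a vertical specialization of $x$, hence $z_{B}\in\overline{\{x\}}$. Choosing an affinoid open $U_{0}\ni x$, tautness makes $\overline{U_{0}}$ quasi-compact, so its closed subspace $\overline{\{x\}}$ is quasi-compact as well; cover it by finitely many affinoid open subsets $V_{1},\dots,V_{n}$ of $X$. Discarding those $V_{k}$ that contain no $z_{B}$, we may assume each $V_{k}$ contains some $z_{B}$, and then $x\in V_{k}$ for every remaining $k$, because $x$ is a vertical generalization of $z_{B}$ and an affinoid open subset contains all vertical generalizations of its points (as used in the proof of Lemma~\ref{min}). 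Every $z_{B}$ still lies in some $V_{k}$, so putting $\mathcal{C}_{k}\coloneqq\{B\in\mathcal{C}:z_{B}\in V_{k}\}$ we have $\mathcal{C}=\mathcal{C}_{1}\cup\dots\cup\mathcal{C}_{n}$.

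The crucial combinatorial step is that some $\mathcal{C}_{k_{0}}$ is coinitial in the chain $\mathcal{C}$, i.e.\ for every $B\in\mathcal{C}$ there is $B'\in\mathcal{C}_{k_{0}}$ with $B'\subseteq B$: otherwise, for each $k$ pick $B_{k}\in\mathcal{C}$ with no element of $\mathcal{C}_{k}$ contained in $B_{k}$, let $B_{k_{0}}$ be the smallest of $B_{1},\dots,B_{n}$ (they are comparable, $\mathcal{C}$ being a chain), and observe that $B_{k_{0}}\in\mathcal{C}=\bigcup_{k}\mathcal{C}_{k}$ puts $B_{k_{0}}$ into some $\mathcal{C}_{j}$ while $B_{k_{0}}\subseteq B_{j}$, contradicting the choice of $B_{j}$. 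Granting this, set $V\coloneqq V_{k_{0}}$, an affinoid open with $x\in V$. For each $B\in\mathcal{C}_{k_{0}}$, $(x,B)$ is a valuation ring of $V$ with center $z_{B}$ on $V$, so Lemma~\ref{criterion}~i) gives $i(\mathcal{O}^{+}_{V}(V))\subseteq B$, where $i\colon\mathcal{O}_{V}(V)\rightarrow k(x)$ is the canonical map; intersecting over $\mathcal{C}_{k_{0}}$ and using coinitiality, $i(\mathcal{O}^{+}_{V}(V))\subseteq\bigcap_{B\in\mathcal{C}_{k_{0}}}B=B^{\ast}$. By the converse direction of Lemma~\ref{criterion}~i), $(x,B^{\ast})$ has a center on $V$, hence on $X$, so $B^{\ast}\in\mathcal{B}$. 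Zorn's lemma then produces the desired minimal element $B$, and the minimality argument of the first paragraph gives the remaining clause.

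I expect the main obstacle to be exactly this confinement step: ensuring that all the centers $z_{B}$ — which a priori may wander over $X$ as $B$ shrinks — can be forced into a single affinoid open. In Huber's original argument the analogous point rests on the pro-constructibility of the set $S(U)$; the idea here is to replace that input by the quasi-compactness of $\overline{\{x\}}$ (which is where tautness is genuinely used) together with the coinitiality pigeonhole above. Everything else — that an intersection of a chain of valuation rings is a valuation ring, that overrings of $A$ form a chain, the affinoid center criterion of Lemma~\ref{criterion}~i), and Zorn's lemma — is routine.
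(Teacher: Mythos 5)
Your proof is correct, and it follows the same overall skeleton as the paper's: realize the desired $B$ as the intersection of the chain of valuation rings between $A$ and $k(x)^+$ that admit a center, use tautness to produce a finite affinoid cover, force (enough of) the centers into a single affinoid open containing $x$, and conclude with the center criterion of Lemma \ref{criterion} i). The interesting divergence is in the confinement step. The paper orders the \emph{centers} by specialization (via Lemma \ref{val} i)), takes the closure $\overline{U_i}$ of an affinoid neighbourhood of one chosen center (taut $\Rightarrow$ quasi-compact), and runs a smallest-center dichotomy over a finite affinoid cover to place \emph{all} centers in one affinoid. You instead use quasi-compactness of $\overline{\{x\}}$ (sitting inside $\overline{U_0}$ for an affinoid $U_0 \ni x$), and run your pigeonhole on the chain of \emph{valuation rings}: the coinitiality argument only places a coinitial subfamily of centers in one affinoid, which suffices because $\bigcap_{\mathcal{C}_{k_0}}B=\bigcap_{\mathcal{C}}B$. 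This buys you two simplifications: you never need Lemma \ref{val} i) (the specialization ordering of the centers plays no role), and the case distinction in the paper's dichotomy is replaced by a clean finite pigeonhole. Conversely, your Zorn's-lemma wrapper is redundant: since all the rings contain $A$, the whole family $\mathcal{B}$ is itself a chain, so one can (as the paper does) intersect the entire family at once and apply your argument directly to that single intersection; your treatment of an arbitrary subchain $\mathcal{C}$ covers this as the special case $\mathcal{C}=\mathcal{B}$, so nothing is lost, it is just slightly longer than necessary.
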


\begin{proof}
	Consider the family $(B_i | B_i \ \text{ is a valuation ring with support} \ x$  and has a center on  $X$  and  $A \subset B_i \subset k(x)^+ )_{i \in I}$. Note that this is non-empty, because of $(x,k(x)^+)$. Since all those valuation rings $B_i$ contain $A$, they are totally ordered by inclusion.
	It is obvious that if such a $B$ exists, it will be the intersection of all those valuation rings $B_i$. Hence we have to show that this intersection $B$ is a valuation ring in $k(x)$ and has a center on $X$. The former is clear since it contains the valuation ring $A$. We show that the centers $z_i$ of all such valuation rings $B_i$, which have a center on $X$, are contained already in the same affinoid neighbourhood. Then with Lemma \ref{criterion} i) this shows the existence of a center of $B$. Note that by Lemma \ref{val} i) all those specializations are also totally ordered. For that we will show the following statement and its negation both implies that such an open affinoid neighbourhood which contains all centers exist.
	We choose an affinoid neighbourhood $U_i$ of some $z_i$. Since $X$ is taut, the closure $\overline{U_i}$ is quasi-compact. Hence there is a finite affinoid cover $V_j$ of $\overline{U_i}$. Every center $z_j$, which generalizes $z_i$, is contained in $U_i$ and every center $z_j$, which is a specialization of $z_i$, is contained in the closure of $U_i$. We consider the following statement: There is a $V_j$ such that for all centers $z_k$ there is a $z_j \in V_j$ with $z_j$ is a specialization of $z_k.$ If this statement is true, then all centers $z_k$ are contained in $V_j$. This is obviously true, because every center $z_k$ is a generalization of an element in $V_j$. Thus, it holds that $z_k \in V_j$. Now we consider the negation of the statement before. We assume that for all $V_j$ there is a center $z_k$ such that for all $z_j \in V_j$ we get that $z_k$ is a specialization of $z_j$. This means for every $V_j$ there is a smallest element with respect to the specialization order among all those centers of $B_i$  which are contained in this $V_j$. Since we only have finitely many $V_j$ we have even a smallest center $z_k$, which is contained in some $V_j$. Hence all centers must be in this $V_j$. This proves the lemma. 
\end{proof}

We need to make an additional assumption on 
$X$ and $S$ in order to successfully extend Huber's proof of the existence of a universal compactification. As mentioned in Remark \ref{newdef}, we could instead drop the conditions on $X$ and $S$ and require that $f$ is weakly square complete.

\begin{theorem}\label{main}
	Let $X$ be a weakly square complete adic space and $S$ a square complete and stable adic space. Then every morphism of adic spaces $f \colon X \rightarrow S$ which is locally of \hspace{0.05cm}$^+$weakly finite type, separated and taut has a  universal vertical compactification $(X', f', j)$ where $j \colon X \rightarrow X'$ is a quasi-compact open embedding, $j(X)$ is closed under generalized horizontal specializations, every point of $X'$ is a vertical specialization of a point of $j(X)$ and $\mathcal{O}_{X'} \rightarrow j_*\mathcal{O}_X$ is an isomorphism of sheaves of topological rings.
\end{theorem}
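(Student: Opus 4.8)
The plan is to follow Huber's construction for \cite[Theorem~5.1.5]{Hu1996}, building $X'$ by gluing the open subsets $U_d\subset U_c$ of Definition~\ref{cmap} along the open immersions of Lemma~\ref{openset}, and using the modified statements of this subsection in place of the hypotheses that the adic spaces be analytic or locally Noetherian. First I would reduce to the case that $S$ is affinoid: choose an open affinoid cover $(S_i)_i$ of $S$; each $S_i$ is square complete (being affinoid) and stable (being open in the stable space $S$), while $f^{-1}(S_i)$ is weakly square complete by Proposition~\ref{weaksqmor} and $f^{-1}(S_i)\to S_i$ is still locally of $^+$weakly finite type, separated and taut. Granting the affinoid‑base case one obtains universal vertical compactifications $X'_i\to S_i$ carrying the four listed extra properties; Lemma~\ref{mod}~iv) then identifies the restriction of $X'_i$ over $S_i\cap S_j$ with a universal vertical compactification of $f^{-1}(S_i\cap S_j)\to S_i\cap S_j$, hence canonically with the restriction of $X'_j$, the cocycle condition being automatic from this uniqueness. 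Gluing yields $f'\colon X'\to S$ and $j\colon X\hookrightarrow X'$ inheriting the extra properties locally, and the universality of $f'$ over $S$ follows by restricting any competitor over the $S_i$ and gluing the resulting unique morphisms.

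Assuming now $S$ affinoid, I would set $X':=\colim_{U\in\mathcal F}U_d$, the colimit taken along the open immersions $\psi_{V,U}\colon U_d\hookrightarrow V_d$ for $U\subseteq V$ in $\mathcal F$ (Lemma~\ref{openset}~i)--iii)), where for incomparable $U,V$ one uses $U\cap V\in\mathcal F$ with $T(U\cap V)=T(U)\cap T(V)$ (Lemma~\ref{cmapprop}~iv)) together with the functoriality of $U\mapsto U_c$; the structure map $f'$ is induced by $U_d\hookrightarrow U_c\to S$ of Theorem~\ref{baby}, and $j$ by the inclusions $U\subseteq U_d$. The one point here that is not formal is that $(U_d,\psi_{V,U})$ is genuinely a gluing datum, which relies on the uniqueness statement of Lemma~\ref{mod}~i); this is exactly where the Noetherian hypothesis entered in \cite{Hu1996}, and it is now supplied by the stalk isomorphism of Corollary~\ref{iso}~i).

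Next I would read off the asserted properties locally. The map $j$ is a quasi-compact open immersion by the local description of $X'$ and the tautness of $X$; $j(X)$ is closed under generalized horizontal specialization by Proposition~\ref{relcl}~iv) on each $U_c$ together with Lemma~\ref{openset}~v); every point of $X'$ is a vertical specialization of a point of $j(X)$ by Proposition~\ref{relcl}~ii); and $\mathcal O_{X'}\to j_*\mathcal O_X$ is an isomorphism of sheaves of topological rings by Proposition~\ref{relcl}~iii) and Corollary~\ref{iso}. Then $f'$ is vertically partially proper: it is locally of $^+$weakly finite type because locally it is an open subspace of $U_c\to S$, which is of $^+$weakly finite type by Theorem~\ref{baby}; it is separated by the valuated valuation ring criterion recalled after Definition~\ref{comp} and the separatedness of $f$; and it is universally vertically specializing because, given a valuation ring $(p,A)$ of $X'$ with $p\in U_d$ having a center on $S$, the argument of Theorem~\ref{baby} produces a center of $(p,A)$ on $U_c$ which, by the defining property of $T(U)$ (used exactly as in Lemma~\ref{openset}~ii)), lies in $U_d$ and hence in $X'$. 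So $(X',f',j)$ is a vertical compactification of $f$.

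Finally, for universality I would, given a vertically partially proper $g\colon Y\to S$ and an $S$-morphism $h\colon X\to Y$, extend $h|_U$ to $g_U\colon U_d\to Y$ for each $U\in\mathcal F$: for $p\in U_d$, using Lemma~\ref{min} and Lemma~\ref{cmapprop}~v) choose $(p_0,A)\in T(U)$ with $c_U((p_0,A))=p$ and with $p_0$ minimal among the vertical generalizations of $p$ in $U$; since $(p_0,A)$ has a center on $S$ and $g$ is universally vertically specializing, the pushforward of $(p_0,A)$ to $Y$ has a center $y$ over that point of $S$, and Lemma~\ref{mod}~ii) then provides an $S$-morphism on an open neighbourhood of $p$ in $U_d$ agreeing with $h$ on $X$ and sending $p$ to $y$; being forced, these local maps patch by the uniqueness in Lemma~\ref{mod}~i), as do the $g_U$ across $U\subseteq V$, yielding $i\colon X'\to Y$ with $i\circ j=h$ and $g\circ i=f'$, unique again by Lemma~\ref{mod}~i). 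The main obstacle is the bookkeeping in these last two steps: everything genuinely new is already packaged into Lemmas~\ref{horizontal}, \ref{openset} and~\ref{mod}, so the work specific to this theorem is to track centers of valuated valuation rings carefully enough to know that the lifted centers land in the $U_d$ and that the locally constructed maps to $Y$ are uniquely determined and hence glue --- the universal-vertical-specialization check for $f'$ and the local extension via Lemma~\ref{mod}~ii) being the places where one must be most careful.
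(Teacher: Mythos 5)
Your overall route is the same as the paper's: reduce to affinoid $S$, glue the $U_d\subset U_c$ along the $Q_{U,V}$ using Lemma \ref{cmapprop} and the uniqueness of Lemma \ref{mod} i) (fed by Corollary \ref{iso} in place of the Noetherian hypothesis), verify the four extra properties of $j$ plus that $f'$ is vertically partially proper, and obtain universality from the mechanism of Lemma \ref{mod} iv) (which you re-derive locally via Lemma \ref{mod} ii) and i), while the paper simply invokes Lemma \ref{mod} iv); that difference is harmless, as is your cleaner way of gluing the $X'_i$ over the overlaps via universality).

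There is, however, a genuine gap in your verification that $f'$ is universally vertically specializing. You claim that for a valuation ring $(p,A)$ with $p\in U_d$ and a center on $S$, the center of $(p,A)$ on $U_c$ lies in $U_d$ ``by the defining property of $T(U)$.'' This is false precisely in the essential case $p\in U\subset X$ (the case one is reduced to by Lemma \ref{mod} iii), since every point of $X'$ is a vertical specialization of a point of $X$): if there is a valuation ring $B$ with $A\subset B\subset k(p)^+$ whose center on $X$ exists but lies outside $U$, then $c_U((p,B))\in c_U(S(U))\setminus U$, and by Lemma \ref{val} i) the center of $(p,A)$ on $U_c$ specializes from it, hence by the identity $U_d=U_c\setminus s(c_U(S(U))\setminus U)$ of Lemma \ref{openset} ii) it lies \emph{outside} $U_d$. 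The defining property of $T(U)$ only helps once you know the relevant valuation ring belongs to $T(U)$, which is exactly what fails here; the fixed chart $U$ is simply the wrong one. The missing idea is the paper's use of Lemma \ref{minimal} (this is where tautness of $X$ enters again): choose the minimal valuation ring $B\supseteq A$ in $k(p)^+$ admitting a center $y$ on $X$, pass to $D=i^{-1}(A)\subset k(y)$ for $i\colon k(y)\rightarrow k(p)$, and pick a chart $U\ni y$; minimality forces $(y,D)\in T(U)$, so its center lies in $U_d\subset X'$ and is also a center of $(p,A)$. (For boundary points $p\in U_d\setminus U$ your argument does work, via Lemma \ref{openset} v), but that case is not where the difficulty sits.) A lesser issue: you dispose of separatedness of $f'$ in one line, whereas this is one of the substantial verifications in the paper --- quasi-separatedness of $X'$ plus the valuative criterion with valuated valuation rings, a case analysis in which Lemma \ref{criterion} iv), Lemma \ref{openset} v) and the (weak) square completeness hypotheses are genuinely used --- so as written that step is asserted rather than proved.
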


We give a brief overview of the proof. The main idea is to use Lemma \ref{mod} iv). At the beginning we will show why we can assume $S$ to be an affinoid space. Then for every open affinoid subspace $U$ and $V$ of $X$ we consider $U_d$ and $V_d$ from Definition \ref{cmap}. By Lemma \ref{openset} $U_d$, respectively $V_d$,  are open subsets of $U_c$, respectively of $V_c$. We will glue $U_d$ and $V_d$ along $(U\cap V)_d$. Our universal vertical compactification $f' \colon X' \rightarrow S$ and $j \colon X \hookrightarrow X'$ are morphism arising from the gluing. The proof is complete once we have verified that the hypotheses of the Lemma \ref{mod} iv) hold.
\begin{proof}
Let $\{S_i \ | \ i \in I\}$ be an affinoid open covering of $S.$ Assume that for every $i \in I$ there exists a vertical compactification $(X'_i, f'_i, j_i)$ of $f_i \colon f^{-1}(S_i) \rightarrow S_i$ such that $j_i$ is a quasi-compact open embedding, the image $j_i(f^{-1}(S_i))$ is closed under generalized horizontal specialization, every point of $X'_i$ is a vertical specialization of $j_i(f^{-1}(S_i))$ and $\mathcal{O}_{X'_i} \rightarrow {j_{i}}_*\mathcal{O}_{f^{-1}(S_i)}$ is an isomorphism of sheaves of topological rings. For every $i,j\in I$ we consider the open subspace $U_{ij} \coloneqq j_i(f^{-1}(S_i \cap S_j))$ in $X'_i$. We define $(j_{ij} \coloneqq {j_j \circ j^{-1}_i}|_{j_i(f^{-1}(S_i \cap S_j))} \ | \ i,j \in I)$, hence we get $j_{ij}$ is an isomorphism of adic spaces between $U_{ij}$ and $U_{ji}$. Together with $(X'_i \ | \ i\in I)$ this forms a gluing data and we get the glued space $X'$ with the morphism $j \colon X \rightarrow X'$ and $f' \colon X' \rightarrow S$ such that $f = f' \circ j$. The conditions of Lemma \ref{mod} iv) hold for $(X', f', j)$  and setting $S=U$ in this lemma shows, that $f'$ is indeed a universal vertical compactification of $f$. Here, we only explicitly show  that $j(X)$ is closed under generalized horizontal specializations. Let $z \in X'$ and $x \in X$ such that $z$ is a generalized horizontal specialization of $j(x)$. We can assume that for some $i,j \in I$ we have $z \in X'_i$ and $j(x) \in X'_j$. Thus, $j(x) \in X'_i \cap X'_j$. The intersection identifies with $j_i(f^{-1}(S_i \cap S_j))$ by definition of the gluing datum. In particular, $j(x) \in j^{-1}_i(f^{-1}(S_i))$ and that implies $j(x) =j_i(x)$. Since $j_i(f^{-1}(S_i))$ is closed under generalized horizontal specializations in $X'_i$, this shows $z \in j(X)$.  The rest follows immediately from the definition of the gluing data. Every affinoid space is weakly square complete. By Proposition \ref{weaksqmor} the subspace $f^{-1}(S_i)$ is weakly square complete.  Hence, we will assume for the rest of this proof that $S$ is affinoid.
\newline
Now we will construct $f' \colon X' \rightarrow S$ from a gluing datum.
By Lemma \ref{openset} ii) the subspace $U_d$ is open in $U_c$ for every $U \in \mathcal{F}$. We endow $U_d$ with the subspace topology of $U_c$. By Lemma \ref{openset} iii) the set $Q_{U,V}\coloneqq \psi_{U,U \cap V}((U \cap V)_d) \subset U_d$ is open in $U_d$ and $\psi_{U,U \cap V}:(U \cap V)_d \rightarrow Q_{U,V}$ is an isomorphism. We set $\lambda_{V,U} \coloneqq \psi_{V,U \cap V} \circ \psi^{-1}_{U,U \cap V}: Q_{U,V} \xrightarrow{\sim} Q_{V,U}.$ We show
\newline

(1) ($Q_{U,V}$ \ | \ $(U,V) \in \mathcal{F}^2)$ and $(\lambda_{V,U} \ | \ (U,V) \in \mathcal{F}^2)$ form a gluing datum for the family of adic spaces $(U_d \ | \ U \in \mathcal{F})$.
\newline

Let $U,V,W$ be elements of $\mathcal{F}$. Then $Q_{U,V} = c_U(T(U \cap V))$ and $Q_{U,W} = c_U(T(U \cap W)).$ It follows 

\begin{align*}
	Q_{U,V} \cap Q_{U,W} &= c_U(T(U \cap V)) \cap c_U(T(U \cap W)) \\
	&=  c_U(c^{-1}_U(c_U(T(U \cap V)))) \cap c_U(T(U \cap W))  \\
	&= c_U(c^{-1}_U(c_U(T(U \cap V))) \cap c_U^{-1}(c_U(T(U \cap W)))) \\
	&= c_U(T(U \cap V) \cap T(U \cap W))
\end{align*}
by applying Lemma \ref{cmapprop} vi) to the second and last equality and the third equality is by a general set theoretical identity for functions. Namely  $g(H) \cap T = g(H \cap g^{-1}(T))$ and setting $g= c_U$ and $H = c_U^{-1}(c_U(T( U \cap V)))$ and $T= c_U(T(U \cap W))$ explains the third equality above. With Lemma \ref{cmapprop} iv) we obtain $Q_{U,V} \cap Q_{U,W} =c_U(T(U \cap V \cap W)).$ Then $(\lambda_{V,U}(Q_{U,V} \cap Q_{U,W}) = c_V(T(U \cap V \cap W)) = Q_{V,U} \cap Q_{V,W}.$  Applying Proposition \ref{relcl} with Lemma \ref{mod} i) to $U\cap V \cap W \rightarrow S$ we get $(\lambda_{W,U} |_{(Q_{U,V} \cap Q_{U,W})}) = (\lambda_{W,V} |_{(Q_{V,W} \cap Q_{V,U})}) \circ (\lambda_{V,U} |_{(Q_{U,V} \cap Q_{U,W})}). $ This shows (1).
\\

Let $X'$ be the adic space which is given by the gluing of the $U_d(U \in \mathcal{F})$ according the gluing datum (1). Every $U_d$ is an adic space over $S$ since $U_c$ is already an adic space over $S$ and every $\lambda_{V,U}$ is an $S$-morphism. Hence $X'$ has a natural structure morphism $f' \colon X' \rightarrow S.$ By Lemma \ref{openset} i) we have a natural $S$-morphism $j \colon X \rightarrow X'$. It is left to show that Lemma \ref{mod} iv) hold for $j, f$ and $f'$, that means we have to show:
\\

 i) $j$ is a quasi-compact open embedding whose image is closed under generalized horizontal specialization image such that every point of $X'$ is a specialization of a point of $j(X)$ and $\mathcal{O}_{X'} \rightarrow j_*\mathcal{O}_X$ is an isomorphism of sheaves of topological rings,
\\

ii) $f'$ is locally of $^+$weakly finite type,
\\

iii) $f'$ is separated,
\\

iv) $f'$ is universally vertically specializing.
\\

We consider every $U_d$ as an open subspace of $X'.$ Then $U_d \cap V_d = (U \cap V)_d$ by definition of $X'$. 

i) First we show that $j$ is quasi-compact. Let $U \in \mathcal{F}$ be given. By Lemma \ref{openset} iv) we have for every $V \in \mathcal{F}$ that $V \cap Q_{V,U} \subset U \cap V$ which says $j^{-1}(U_d) \cap V \subset U \cap V$. This shows that $U = j^{-1}(U_d)$. We know that $j$ is injective by construction. Let $V \subset X'$ be an open quasi-compact subset.  In particular, we can assume it is covered by open affinoid subsets where each is contained in one of those $U_d.$ Since $V$ is quasi-compact, we can even assume $X'$ is covered by finitely many open affinoid subspaces where each is contained in one of those $U_d$. Together with $U=j^{-1}(U_d)$ and the fact that the inclusion $j|_U \colon U \hookrightarrow U_d$ is quasi-compact (because $\phi_U \colon U \rightarrow U_c$ is quasi-compact), this shows $j$ is quasi-compact.  Let $k:U_d \rightarrow U_c$ be the open inclusion. The composition $U \xrightarrow{j} U_d \xrightarrow{k} U_c$ is the morphism $\phi_U \colon U \rightarrow U_c$. Again together with $U = j^{-1}(U)_d$ and by Proposition \ref{relcl} iii) this shows $\mathcal{O}_{X'} \rightarrow j_*\mathcal{O}_X$ is an isomorphism of sheaves of topological rings. Further, let $z \in X'$ a generalized horizontal specialization of some $j(x)$ with $x \in X$. Choose some $U \in \mathcal{F}$ with $z \in U_d$. Then $j(x) \in U_d$. Since $U = j^{-1}(U_d)$ we get $j(x) \in j(U)$. Since $U_d \subset U_c$ and $(k \circ j(U))$ is closed under generalized horizontal specialization in $U_c$ by Lemma \ref{gen.hor.}, we get $z \in j(U)$.
\\

ii) In Theorem \ref{baby} we showed that $U_c \rightarrow S$ is of $^+$weakly finite type. Hence $U_d \rightarrow S$ is locally of $^+$weakly finite type and this shows ii).
\\

iii) Rather than proving that $f'$ is quasi-separated, we show that $X'$ is quasi-separated. Note that $X$ is quasi-separated because $S$ is affinoid by assumption and $f$ is separated.  Let $A$ and $B$ be quasi-compact open subsets of $X'$. We have to show that $A \cap B$ is quasi-compact. First we assume that there exist $U, V \in \mathcal{F}$ with $A \subset U_d$ and $B \subset V_d$. Let $\mu_{U,U \cap V} \colon ( U \cap V)_d \rightarrow U_d$ and $\mu_{V,U \cap V}:(U \cap V)_d \rightarrow V_d$ be the restrictions of $\psi_{U,U \cap V}:(U \cap V)_c \rightarrow U_c$ and $\psi_{V,U \cap V}:(U \cap V)_c \rightarrow V_c$ on $(U \cap V)_d$.
We show
\\

(2) $(U\cap V)_d = \psi^{-1}_{U,U \cap V}(U_d) \cap \psi^{-1}_{V, U \cap V}(V_d)$.
\\

We have $\psi^{-1}_{U,U \cap V}(U_d) = c_{U\cap V}(R(U \cap V) \cap T(U))$. The inclusion from left to right follows by Lemma \ref{cmapprop} i, v). The other inclusion follows directly by the definition of $c_{U\cap V}(R(U \cap V) \cap T(U))$ . Similarly we get $\psi^{-1}_{V,U \cap V}(V_d) = c_{U \cap V}(R(U \cap V) \cap T(V)).$ We obtain $\psi^{-1}_{U,U \cap V}(U_d) \cap \psi^{-1}_{V,U \cap V}(V_d) = c_{U \cap V}(R(U \cap V) \cap T(U) \cap T(V)) = c_{U \cap V} (T(U \cap V)) = (U \cap V)_d$, where the first equality follows by Lemma \ref{cmapprop} vii) by a similar argument as in (1), where we showed $Q_{U,V} \cap Q_{U,W} = c_U(T(U \cap V) \cap T((U \cap W))$, and the second equality follows by Lemma \ref{cmapprop} iv). Since $\psi_{U,U \cap V}$ and $\psi_{V,U \cap V}$ are morphisms between affinoid spaces induced by an adic ring homomorphism, they are quasi-compact. Hence $\psi^{-1}_{U,U \cap V}(A)$ and $\psi^{-1}_{V,U \cap V}$(B) are  quasi-compact open subsets of $(U \cap V)_c$. Then their intersection is also quasi-compact, because $(U \cap V)_c$ is a spectral space. By (2) we get $\psi^{-1}_{U,U \cap V}(A) \cap \psi^{-1}_{V,U \cap V}(B) = \mu^{-1}_{U,U \cap V}(A) \cap \mu^{-1}_{V,U \cap V}(B) = A \cap B,$ where the last equality follows from the identification of $U_d \cap V_d$ with $(U \cap V)_d$. Thus $A \cap B$ is quasi-compact. For the general case we cover $A$ with finitely many $U_d$. Since $A \cap U_d$ is open,  it is a union of affinoid open subspaces, which are all contained in $U_d$. Altogether we get that $A$ is the union of affinoid open subspaces, where each affinoid open subspace is completely contained in one $U_d$ and this union can be reduced to a finite union since $A$ is quasi-compact. We argue similarly for $B$. Then $A\cap B$ is just a finite union of sets which are quasi-compact by the special case. Hence $A \cap B$ is quasi-compact. 
\\

It remains to show for (iii) that $f'$ satisfies the valuative criterion for separatedness \cite[Proposition ~ 3.11.12.]{Hu1993}. Let $(x,A,A_{\nu})$ be a valuated valuation ring of $X'$. Since $S$ is an affinoid space by assumption, $(x,A,A_{\nu})$ has at most one center on $S$. Thus it is enough to show if $u,v \in X'$ are centers of $(x,A, A_{\nu})$ then $u = v$.  We choose $U,V \in \mathcal{F}$ with $u \in U_d$ and $v \in V_d$ and hence $x \in (U \cap V)_d$. We distinguish the cases $x \in U \cap V$ and $x \in (U \cap V)_d \backslash (U\cap V)$.
\\
If $x \in  (U \cap V)_d \backslash (U\cap V)$ then by Lemma \ref{openset} v) we get $u,v \in  (U \cap V)_d \backslash (U\cap V)$ and hence $u,v$ are centers of $(x,A,A_{\nu})$ on $(U \cap V)_d$. Therefore $u=v$, because $(U \cap V)_d$ is separated over $(U \cap V)_c$ by Lemma \ref{openset} ii). 

We assume $x \in U\cap V$. Since $(x,A,A_{\nu})$ has centers on $U_d$ and $V_d$, by Lemma \ref{criterion} iv) also $(x,A_{\nu})$ has centers on $U_d$ and $V_d$. Note that we can use this lemma, because $x \in U_c$ is in an affinoid space which contains the open subspace $U_d$, respectively $x \in V_c$ which contains the open subspace $V_d$ and thus both open subspaces contain a center of $(x,A_{\nu})$. Hence $(x,A_{\nu})$ is a valuation ring on $U$ and $V$ and $c_U((x,A_{\nu})) \in U_d$ and $c_V((x,A_{\nu})) \in V_d$. By Lemma \ref{cmapprop} v) we get $(x,A_{\nu}) \in T(U) \cap T(V)$ and by Lemma \ref{cmapprop} iv) we get $(x,A_{\nu}) \in T(U\cap V)$. This means $(x,A_{\nu})$ has a center $h \in (U \cap V)_d$. Thus all three centers in $U_d, V_d$ and $(U \cap V)_d$ of $(x,A_{\nu})$ are equal to $h$. Note that by Lemma \ref{criterion} iv) $u$ and $v$ are horizontal specializations of $h$. We distinguish whether $h \in (U \cap V)_d \backslash U \cap V$ or $h \in U \cap V$: If the first holds then by Lemma \ref{openset} v), $u$ and $v$ are in $(U \cap V)_d \backslash U \cap V.$ Like in the first case for $x$ we get $u=v$. If $h \in U\cap V$, then in particular $h \in X$ and thus $u,v$ are also in $X$. Since by (5.i) $X$ is closed under generalized horizontal specializations and $X \rightarrow S$ is separated and $S$ affinoid, we get $u=v$.
\\

iv) We show for $x \in X'$ that $f'$ is universally vertically specializing at $x$. By Lemma \ref{mod} iii) it is enough to show for a vertical generalization of $x$ that $f'$ is universally vertically specializing. So we can assume $x \in X$, because every point of $X'$ is a vertical specialization of a point of $X$. Let $(x,A)$ be a valuation ring of $X$ which has a center on $S$. We want to show that $(x,A)$ has a center on $X'$. Let us choose a $U \in \mathcal{F}$ with $x \in U$. 

Let $(x,B)$ be the valuation ring such that $A \subset B \subset k(x)^+$ and $(x,B)$ has a center on $X$ and for every valuation ring $C$ of $k(x)$ with $A \subset C \subsetneq B$ the valuation ring $(x,C)$ of $X$ has no center on $X$ which exist by Lemma \ref{minimal}. Let $y \in X$ be the center of $(x,B)$ on $X$ and let $i \colon k(y) \rightarrow k(x)$ be the natural ring homomorphism. Note that $k(y)^+ = i^{-1}(B).$ Set $D\coloneqq i^{-1}(A).$ For every valuation ring $C$ of $k(y)$ with $D \subset C \subsetneq k(y)^+$ the valuation ring $(y,C)$ of $X$ has no center on $X$, because otherwise we would have a valuation ring $(x,C')$ with a center on $X$, which is strict a subset of $B$. By the choice of $B$ this cannot be. Note that strictness will be conserved by taking intersections with $k(y)$ by similar arguments as in the proof of Lemma \ref{val} ii). We choose a $U \in \mathcal{F}$ with $y \in U$. By assumption $(x,A)$ has a center on $S$. Hence $(y,D)$ has a center on $S$ by the choice of $D$. By definition of $T(U)$ we get $(y,D) \in T(U)$. Thus there is a center $z \in U_d$ of $(y,D)$ by definition of $U_d$ and $z$ is also a center of $(x,A)$. This concludes the proof of this theorem. 
\end{proof}

\begin{corollary}(\cite[Theorem ~ 5.1.5]{Hu1996}) \label{original}
	Let $f \colon X \rightarrow S$ be a morphism of analytic adic spaces which is locally of $^+$weakly finite type, separated and taut and where $S$ is a stable adic space. Then $f$ has a universal compactification in the sense of analytic adic spaces, \cite[Definition ~ 5.1.1]{Hu1996}.
\end{corollary}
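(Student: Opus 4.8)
The plan is to deduce this directly from Theorem~\ref{main} by specializing all hypotheses and conclusions to the analytic setting and checking that Huber's notions are exactly the restrictions of the ``vertical'' notions used here. First I would record the two standard facts that make everything collapse for analytic adic spaces: in an analytic adic space every specialization is a vertical specialization, and in particular the only horizontal specializations are the trivial ones. Granting this, every analytic adic space is square complete: in the defining diagram of Definition~\ref{weaker}, if $x_1$ is a horizontal specialization of $x$ then $x_1=x$, so the only candidate for the completion is $x_2$ itself, which is a vertical specialization of $x_1=x$ and (trivially) a horizontal specialization of $x_2$; uniqueness holds because any horizontal specialization of $x_2$ must equal $x_2$. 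In particular $X$ and $S$ are square complete, hence $X$ is weakly square complete, and $S$ is stable by hypothesis. Thus $f\colon X\to S$ meets all the hypotheses of Theorem~\ref{main}, which produces a universal vertical compactification $(X',f',j)$ with $j$ a quasi-compact open embedding, $j(X)$ closed under generalized horizontal specializations, every point of $X'$ a vertical specialization of a point of $j(X)$, and $\mathcal{O}_{X'}\to j_*\mathcal{O}_X$ an isomorphism of sheaves of topological rings.

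Next I would check that $X'$ is itself an analytic adic space. The set of analytic points of an adic space is stable under vertical generalization and vertical specialization, since such specializations preserve the support of the associated valuation (equivalently, preserve whether that support is open), cf.\ the dichotomy recalled in Remark~\ref{spec. field} iv). As $X$ is analytic and every point of $X'$ is a vertical specialization of a point of $j(X)\cong X$, every point of $X'$ is analytic, so $X'$ is an analytic adic space. Because $X'$ is analytic, every specialization in $X'$ is again vertical, so the notions ``vertically specializing'', ``universally vertically specializing'' and ``vertically partially proper'' of Definition~\ref{proposition} coincide on $X'$ with Huber's ordinary notions of specializing, universally specializing and partially proper. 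Hence $f'$ is partially proper in Huber's sense, and $j$, being a quasi-compact open embedding, is in particular an open immersion; so $(X',f',j)$ is a compactification of $f$ in the sense of \cite[Definition~5.1.1]{Hu1996}.

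It then remains to verify universality against Huber's class of competitors. Let $g\colon Y\to S$ be a partially proper morphism of analytic adic spaces together with an $S$-morphism $h\colon X\to Y$. Since $Y$ is analytic, every specialization in $Y$ is vertical, so $g$ is vertically partially proper in the sense of Definition~\ref{proposition} (and $Y$ is stable, as every analytic adic space in Huber's framework has Noetherian rings of definition and is therefore stably sheafy); thus $g$ is one of the competitors appearing in the universal property of $(X',f',j)$ supplied by Theorem~\ref{main}, which yields a unique $S$-morphism $i\colon X'\to Y$ with $i\circ j=h$. As this holds for every such $g$, the triple $(X',f',j)$ is the universal compactification of $f$ in the sense of \cite[Definition~5.1.1]{Hu1996}.

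The only genuine content beyond this definition-matching is the two inputs invoked above: that analytic adic spaces have only vertical specializations (hence only trivial horizontal ones), and that analyticity of a point is preserved under vertical specialization. Both are classical facts about adic spaces (see \cite{Hu1993}), so I do not expect a serious obstacle; the ``hard part'', such as it is, is purely bookkeeping, namely being careful that Huber's partially proper morphisms and open immersions are precisely the specializations of the vertically partially proper morphisms and quasi-compact open embeddings produced by Theorem~\ref{main}, and that Huber's competitors $Y$ are legitimate competitors in Definition~\ref{comp} ii).
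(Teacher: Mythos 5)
Your proposal is correct and takes essentially the same route as the paper: the paper's own proof is a one-liner observing that analytic adic spaces have no proper horizontal specializations (hence are square complete) and then invoking Theorem~\ref{main}, exactly the reduction you carry out. Your additional bookkeeping (that $X'$ stays analytic, that the vertical notions restrict to Huber's notions, and that Huber's competitors are legitimate competitors in Definition~\ref{comp}) is a faithful expansion of what the paper leaves implicit.
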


\begin{proof}

Every analytic adic space is square complete because there are no proper horizontal specializations. We also mention that the original proof of \cite[Theorem ~ 5.1.5]{Hu1996} is incorporated into our arguments in Theorem \ref{main}. 
\end{proof}

We prove that a certain category of analytic adic spaces admits a universal compactification for morphisms as in the corollary above

\begin{definition}
	Let $\mathcal{C}$ be the category of quasi-separated adic spaces over $\mathbb{Z}_p$ having an open covering by affinoids attached to stably strongly uniform Tate Huber Pairs.
\end{definition}

\begin{proposition}
	Let $f \colon X \rightarrow S$ be a morphism in $\mathcal{C}$ that is locally of $^+$weakly finite type, separated and taut. Then there exists a universal compactification of $f$ in $\mathcal{C}$. 
\end{proposition}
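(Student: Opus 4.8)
The plan is to deduce the statement from Corollary~\ref{original} and then to check that the universal compactification constructed there is an object of $\mathcal{C}$; its universal property inside $\mathcal{C}$ is then formal.

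First I would note that every object of $\mathcal{C}$ is a Tate, hence analytic, adic space, and that a stably (strongly) uniform Tate Huber pair is sheafy and in fact stable, so $S$ is a stable adic space. Thus $f\colon X\to S$ is a morphism of analytic adic spaces which is locally of $^+$weakly finite type, separated and taut with $S$ stable, and Corollary~\ref{original} (Theorem~\ref{main} in the analytic case) produces a universal compactification $(X',f',j)$ of $f$ in the sense of \cite[Definition~5.1.1]{Hu1996}, with $j\colon X\hookrightarrow X'$ a quasi-compact open immersion, $j(X)$ closed under generalized horizontal specialization, every point of $X'$ a vertical specialization of a point of $j(X)$, and $\mathcal{O}_{X'}\xrightarrow{\sim}j_*\mathcal{O}_X$. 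It then remains only to prove $X'\in\mathcal{C}$: the universal property in $\mathcal{C}$ follows at once, since $f'$ is universal among \emph{all} partially proper $g\colon Y\to S$, a fortiori among those with $Y\in\mathcal{C}$.

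Next I would verify the three conditions defining $\mathcal{C}$ for $X'$. It is an adic space over $\mathbb{Z}_p$ via $f'$ followed by the structure morphism of $S$; and it is quasi-separated, since $f'$ is separated (being vertically partially proper), hence quasi-separated, while $S$ is quasi-separated. The remaining point is to produce an open covering of $X'$ by affinoids attached to stably strongly uniform Tate Huber pairs. Recall from the construction in Theorem~\ref{main} that $X'$ is glued from the open subspaces $U_d\subset U_c$, $U$ running over the affinoid opens of $X$ and $U_c=\mathrm{Spa}(\mathcal{O}_U(U),I(U))$. Given $x'\in X'$, fix $U$ with $x'\in U_d$; by Proposition~\ref{relcl}~ii) and Lemma~\ref{min} there is a minimal vertical generalization $z\in U$ of $x'$. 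Since $z\in X$, choose an affinoid open $V\subset U$ containing $z$ with $\mathcal{O}_V(V)$ a rational localization of one of the distinguished Huber rings of $X$, hence stably strongly uniform Tate. By Lemma~\ref{stalk}~i) there is an open $W'\subset U_c$ with $x'\in W'$ and $W'\cap U=V$; pick a rational subset $Z$ of $U_c$ with $x'\in Z\subset W'\cap U_d$, an affinoid open neighborhood of $x'$ in $X'$. Since $Z\subset W'$ we get $Z\cap U=Z\cap V$, which is a rational subset of $V$, so by Proposition~\ref{relcl}~iii) the Huber ring $\mathcal{O}_{U_c}(Z)\cong\mathcal{O}_V(Z\cap V)$ is a rational localization of $\mathcal{O}_V(V)$; as a rational subset of a rational subset is again a rational subset and ``stably strongly uniform Tate'' is preserved by rational localization, $\mathcal{O}_{U_c}(Z)$ is stably strongly uniform Tate. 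Letting $x'$ vary gives the required covering, so $X'\in\mathcal{C}$.

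The only genuinely non-formal step is this last one: showing that the affinoid charts of $X'$ can be chosen inside the good class. The key mechanism is that near $x'$ the chart may be taken with underlying ring $\mathcal{O}_{U_c}(Z)\cong\mathcal{O}_V(Z\cap V)$ for a small affinoid $V$ in $X$ around the minimal vertical generalization $z$ of $x'$ --- exactly the content of Lemma~\ref{stalk} (and Corollary~\ref{iso}) --- which reduces everything to the stability of the chosen class of Huber rings under rational localization. The rest --- reducing to Corollary~\ref{original}, and the observation that a universal object stays universal against any subcategory containing it --- is formal bookkeeping.
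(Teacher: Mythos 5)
Your proposal is correct, and its skeleton coincides with the paper's: reduce to Corollary \ref{original} (noting that objects of $\mathcal{C}$ are analytic and that stably strongly uniform pairs give a stable base $S$), observe that universality against all vertically partially proper targets is a fortiori universality inside $\mathcal{C}$, and then verify $X'\in\mathcal{C}$. Where you diverge is in the verification step. The paper argues globally with two structural claims: (i) for an affinoid $U$ attached to a good pair, $U_c=\mathrm{Spa}(\mathcal{O}_U(U),I(U))$ has the \emph{same} Huber ring and only a smaller ring of integral elements, so it stays in the good class; and (ii) any open subspace of an object of $\mathcal{C}$ lies in $\mathcal{C}$ (cover by rational subsets); membership of $X'$ then follows since $X'$ is glued from the opens $U_d\subset U_c$. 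You instead build good charts pointwise: around $x'\in U_d$ you take the minimal vertical generalization $z\in U$ (Proposition \ref{relcl} ii), Lemma \ref{min}), shrink to a good affinoid $V\ni z$ inside a distinguished chart of $X$, and use Lemma \ref{stalk} i) together with $\mathcal{O}_{U_c}\cong\phi_*\mathcal{O}_U$ (Proposition \ref{relcl} iii)) to exhibit a rational neighbourhood $Z$ of $x'$ in $U_d$ with $\mathcal{O}_{U_c}(Z)\cong\mathcal{O}_V(Z\cap V)$, a rational localization of a stably strongly uniform ring. Your route is longer but buys something real: the gluing family $\mathcal{F}$ consists of \emph{all} affinoid opens of $X$, which need not themselves be attached to good pairs, so the paper's claim (i) applies directly only to the distinguished charts and one still has to see that good charts cover $X'$; your local argument handles exactly this point. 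Two small remarks: the fact you need is not literally ``a rational subset of a rational subset is rational'' but rather that a rational subset of $U_c$ meets $U$ in a rational subset of $U$ (same defining functions, same global ring) and that a rational subset of $U$ contained in the affinoid open $V$ is rational in $V$ (standard in the Tate case); and, like the paper, you implicitly use that ``stably strongly uniform Tate'' is a condition on the Huber ring alone, so that changing the ring of integral elements (as in $\mathcal{O}^+_{U_c}(Z)$ versus $\mathcal{O}^+_V(Z\cap V)$) is harmless, and that Lemma \ref{stalk} i) is applied over an affinoid piece $S_i$ of $S$, consistent with the reduction made in Theorem \ref{main}.
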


\begin{proof}
Note that $X$ and $S$ are analytic adic spaces. Together with the overall assumptions on $f$ and using Corollary \ref{original}  there exists a universal compactification $X'$ of $f$. We have to show that $X'$ is in $\mathcal{C}$. The universal compactification $X'$ is glued from open subspaces of $U_c$ for open affinoid subsets $U \subset X$. The proof is complete, if we show:
\begin{enumerate}[label=\roman*)]
	\item If $U\rightarrow S'$ is a morphism between affinoid spaces in $\mathcal{C}$, then $U_c$ is an object of the category $\mathcal{C}$.
	\item For any open subspace $U$ of some object $Y$ in $\mathcal{C}$ the subspace $U$ is also an object in $\mathcal{C}$.
\end{enumerate}
i) The ring of integral elements $I(U)$ of the Huber pair attached to $U_c$ is contained in the ring of integral elements that is attached to $U$. Thus $U_c$ is an object of $\mathcal{C}$. 
\\
ii) Let $Y$ be an object in $\mathcal{C}$ and $U \subset Y$ an open subspace. Let $U_i$ be an open affinoid cover of $Y$, where each $U_i$ is attached to stably strongly uniform Tate Huber pairs. We cover $U \cap U_i$ with rational opens of $U_i$, which are objects in $\mathcal{C}$. This shows ii).

\end{proof}

\section{Ad hoc construction} \label{4}

We want to describe the universal vertical compactification more directly. For this, we will construct a set $\mathrm{Spa}(X,S)$ for any morphism $f \colon X \rightarrow S$ of adic spaces, Definition \ref{square}, which will be bijective to the universal vertical compactification. We will first show that this holds in the affinoid case in Proposition \ref{bab}. In the affinoid case we will endow $\mathrm{Spa}(X,S)$ with the structure of an adic space by structure transport. For arbitrary adic spaces $X$ and $S$ we will define a final topology by the help of the affinoid case in Definition \ref{top}. In Theorem \ref{spa} we will show that for a morphism $f \colon X \rightarrow S$ with the same conditions as in Theorem \ref{main} the set $\mathrm{Spa}(X,S)$ will be even homeomorphic to $X'$. By this we can endow $\mathrm{Spa}(X,S)$ also with an adic structure.

\begin{definition}\label{square}
	
	Let $f \colon X \rightarrow S$ be a morphism of adic spaces. A $point$ $of$ $X$ $with$ $center$ $on$ $S$ is a commutative square of the form
	
	\begin{center}
		\begin{tikzcd}
			
			\text{Spa}(k,k^{\circ}) \arrow[r,"u"] \arrow[d] & X \arrow[d] \\
			\text{Spa}(k,k^+) \arrow[r, "v"] & S,\\

		\end{tikzcd}
	\end{center}
	
	where $u$ is an adic morphism, $(k,k^+)$ is an affinoid field with ring of powerbounded elements $k^{\circ}$ and where the left vertical map is induced by the inclusion $k^+ \subset k^{\circ}$. 
	We denote a point of $X$ with center on $S$ with $(k^+,u,v)$. Two points $(k_i^+, u_i, v_i)$ for $i=1,2$ of $X$ with center on $S$ are equivalent if there exists a third point $(V,u,v)$ and commutative diagrams
	
	\begin{center}
		\begin{tikzcd}
			\text{Spa}(k_i,k_i^{\circ}) \arrow[d] \arrow[r] \arrow[rr, bend left, "u_i"] & \text{Spa}(k,k^{\circ}) \arrow[r, "u"] \arrow[d] & X \arrow[d] \\
			\text{Spa}(k_i,k_i^+) \arrow[r] \arrow[rr, bend right, "v_i"] & \text{Spa}(k,V) \arrow[r, "v"] & S 			
		\end{tikzcd}
	\end{center}
	with $k_i^+ \cap k = V$ for $i=1,2$, where $V \subset k^{\circ}$ is a valuation ring of $k$.
\end{definition}

\begin{lemma}\label{triple}
	Let $X \rightarrow S$ be a morphism of adic spaces.
	For every point of $X$ with center on $S$ exists a unique triple $(x,V,s)$ such that $(x,V)$ is a valuation ring on $X$ and $x$ is a point without vertical generalization and $s$ is center of $(x,V)$ on $S$ such that we have a unique factorization
	
	\begin{center}
		\begin{tikzcd}
			\mathrm{Spa}(k,k^{\circ}) \arrow[d] \arrow[r] \arrow[rr, bend left, "u"] & \mathrm{Spa}(k(x),k(x)^{\circ}) \arrow[r, "u_x"] \arrow[d] & X \arrow[d] \\
			\mathrm{Spa}(k,k^+) \arrow[r] \arrow[rr, bend right, "v"] & \mathrm{Spa}(k(x),V) \arrow[r, "v_x"] & S.			
		\end{tikzcd}
	\end{center}
	and $(k(x), u_x, v_x)$ is equivalent to $(k,u,v)$.
\end{lemma}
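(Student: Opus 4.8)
The plan is to show that the required triple is nothing but the square $(k,u,v)$ repackaged. Write $\eta$ for the unique point of $\mathrm{Spa}(k,k^{\circ})$ — it is a single point, since a continuous valuation on $k$ bounded by $1$ on $k^{\circ}$ has valuation ring containing $k^{\circ}$, hence equal to $k^{\circ}$ (the trivial valuation being excluded by continuity when $k$ is non‑discrete, and $k^{\circ}=k$ when $k$ is discrete) — and write $\xi$ for the closed point of $\mathrm{Spa}(k,k^{+})$ and $\eta_{S}\in\mathrm{Spa}(k,k^{+})$ for the image of $\eta$ under the morphism $\mathrm{Spa}(k,k^{\circ})\to\mathrm{Spa}(k,k^{+})$ induced by $k^{+}\subset k^{\circ}$, i.e.\ the point with valuation ring $k^{\circ}$. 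Set $x:=u(\eta)$. The one genuinely non‑formal step is to observe that, because $u$ is \emph{adic} with source the single point $\eta$ whose residue valuation has rank $\le 1$, the valuation $v_{x}$ also has rank $\le 1$: choosing an affinoid open $U\ni x$ with $\mathcal{O}_{X}(U)\to k$ adic, $v_{x}$ is equivalent to $a\mapsto|u^{\flat}(a)|_{k}$, whose value group is a subgroup of the rank‑$\le 1$ value group of $|\cdot|_{k}$. Hence $x$ has no vertical generalization and $k(x)^{+}=k(x)^{\circ}$ (and after identifying $\widehat{k(x)}$ with $k$ via $u$ one has $k(x)^{\circ}=k^{\circ}\cap k(x)$), so $\mathrm{Spa}(k(x),k(x)^{\circ})$ is again a single point and, since $\mathrm{Spa}$ is insensitive to completion, $u$ is the composite $\mathrm{Spa}(k,k^{\circ})\xrightarrow{\sim}\mathrm{Spa}(k(x),k(x)^{\circ})\xrightarrow{u_{x}}X$ with $u_{x}$ the canonical structure morphism.

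Next I set $V:=k^{+}\cap k(x)$, a valuation ring of $k(x)$ with $V\subset k^{\circ}\cap k(x)=k(x)^{\circ}=k(x)^{+}$, so $(x,V)$ is a valuation ring on $X$; by Remark \ref{spec. field} iv) $V$ is clopen in $k(x)$, so $(k(x),V)$ is an affinoid field, and by the topological fact recalled in the proof of Lemma \ref{val} ii) (the closure of an open set intersected with a dense set), $k^{+}$ is the closure of $V$ in $k$, so $\mathrm{Spa}(k,k^{+})\xrightarrow{\sim}\mathrm{Spa}(k(x),V)$ as well. Put $s:=v(\xi)$. Since $\xi$ is a vertical specialization of $\eta_{S}$ and $f(x)=f(u(\eta))=v(\eta_{S})$ by commutativity of the square, $s$ is a specialization of $f(x)$; tracking residue fields, $k(s)\hookrightarrow k(f(x))\hookrightarrow k$ compatibly, $k(s)^{+}=k^{+}\cap k(s)$ because $v_{s}$ is the restriction to $k(s)$ of the valuation with valuation ring $k^{+}$, and hence $j^{-1}(V\cap k(f(x)))=k^{+}\cap k(f(x))\cap k(s)=k^{+}\cap k(s)=k(s)^{+}$ for $j\colon k(s)\to k(f(x))$, which is exactly the condition for $s$ to be a center of $(x,V)$ on $S$ (Definition \ref{special} ii), via the Remark after Proposition \ref{cut}). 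Taking $v_{x}\colon\mathrm{Spa}(k(x),V)\to S$ to be $v$ read through the isomorphism above (so that its closed point maps to $s$), the factorization square of the lemma commutes because the original one does, and $(k(x),u_{x},v_{x})$ is equivalent to $(k,u,v)$ — they are canonically isomorphic, a common refinement being $(k(x),V,u_{x},v_{x})$ itself via $k(x)\hookrightarrow k$, the conditions ``$k^{+}\cap k(x)=V$'' and ``$V\cap k(x)=V$'' both holding by construction. This settles existence.

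For uniqueness, let $(x',V',s')$ be any triple with the stated properties. From its factorization $u=u_{x'}\circ(\mathrm{Spa}(k,k^{\circ})\to\mathrm{Spa}(k(x'),k(x')^{\circ}))$ and the fact that $\mathrm{Spa}(k,k^{\circ})$, $\mathrm{Spa}(k(x'),k(x')^{\circ})$, $\mathrm{Spa}(k(x),k(x)^{\circ})$ are all single points with $u_{x'},u_{x}$ the canonical structure morphisms, one reads off $x'=u_{x'}(\mathrm{pt})=u(\eta)=x$, whence $k(x')=k(x)$ and $u_{x'}=u_{x}$. Both $(k(x),V,u_{x},v_{x})$ and $(k(x),V',u_{x},v_{x'})$ are equivalent to $(k,u,v)$, hence to each other; let $(m,W,u'',v'')$ be a common refinement, with field maps $\alpha\colon m\hookrightarrow k(x)$ (from the first) and $\beta\colon m\hookrightarrow k(x)$ (from the second), and let $\gamma\colon k(x)\hookrightarrow m$ be the residue‑field map attached to $u''$. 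Comparing residue‑field maps in $u_{x}=u''\circ\alpha_{*}$ and using surjectivity of $\mathcal{O}_{X,x}\twoheadrightarrow k(x)$ gives $\alpha\circ\gamma=\mathrm{id}_{k(x)}$; since $\alpha$ is injective it is an isomorphism, and likewise $\beta$, with $\alpha=\beta=\gamma^{-1}$. The refinement condition then reads $\alpha^{-1}(V)=W=\beta^{-1}(V')=\alpha^{-1}(V')$, so $V=V'$; and on the $S$‑side the refinement maps become identities once $m=k(x)$ and $W=V$, so $v_{x'}=v''=v_{x}$ and $s'=v_{x'}(\text{closed pt})=v_{x}(\text{closed pt})=s$. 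Hence $(x',V',s')=(x,V,s)$.

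The essential difficulty is concentrated in the first paragraph — showing that an adic morphism out of the one‑point space $\mathrm{Spa}(k,k^{\circ})$ lands in a point with no vertical generalization — and, secondarily, in keeping the various embeddings of $k(x)$ into $k$ (and their completions, when $k(x)\subsetneq k$) coherent throughout the uniqueness argument; the remainder is formal manipulation of affinoid fields and of the equivalence relation of Definition \ref{square}.
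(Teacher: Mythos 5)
Your construction coincides with the paper's: take $x$ to be the image of the unique point of $\mathrm{Spa}(k,k^{\circ})$, set $V=k^{+}\cap k(x)$ and $s=v(\xi)$, and verify that $s$ is a center of $(x,V)$ on $S$; your verification of the center condition and your uniqueness discussion are in fact more detailed than the paper's one-sentence treatment. There is, however, one step that is wrong as written: you claim that $\mathrm{Spa}(k,k^{\circ})\rightarrow\mathrm{Spa}(k(x),k(x)^{\circ})$ and $\mathrm{Spa}(k,k^{+})\rightarrow\mathrm{Spa}(k(x),V)$ are isomorphisms (equivalently, that $\widehat{k(x)}$ may be identified with $k$ via $u$), and you use the second claimed isomorphism to \emph{define} $v_{x}$ (``$v$ read through the isomorphism''). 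This fails whenever $k$ is a proper extension of the completion of $k(x)$ --- e.g.\ $u\colon \mathrm{Spa}(\mathbb{C}_p,\mathcal{O}_{\mathbb{C}_p})\rightarrow \mathrm{Spa}(\mathbb{Q}_p,\mathbb{Z}_p)$, where $k(x)=\mathbb{Q}_p\neq \mathbb{C}_p=k$ --- and in general these maps need not even be bijective on points. The repair is the route the paper takes: from the fact (which you do prove correctly) that $s$ is a center of $(x,V)$ on $S$, Lemma \ref{criterion} i) applied to an affinoid neighbourhood $W\ni s$ gives a morphism of Huber pairs $(\mathcal{O}_S(W),\mathcal{O}^{+}_S(W))\rightarrow (k(x),V)$ and hence the natural morphism $v_x\colon \mathrm{Spa}(k(x),V)\rightarrow S$; commutativity of the bottom row then follows because $v$ and the composite are induced by the same ring map $\mathcal{O}_S(W)\rightarrow k(x)\rightarrow k$, and the left bottom arrow sends the closed point to the closed point precisely because $k^{+}\cap k(x)=V$. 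Only factorizations, never isomorphisms, are needed (and likewise on the top row, where the factorization of $u$ through $\mathrm{Spa}(k(x),k(x)^{\circ})$ comes from locality of $\mathcal{O}_{X,x}\rightarrow k$).

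Two smaller points. First, the inference ``the value group of $v_x$ has rank $\le 1$, hence $x$ has no vertical generalization'' is incomplete: a non-analytic point carrying a nontrivial rank-$1$ valuation does have the trivial-valuation vertical generalization, so you must also use adicity to see that $\ker(\mathcal{O}_X(U)\rightarrow k)$ is not open when $k$ is non-discrete (the paper is equally terse here, so this only mirrors its gap). Second, your uniqueness argument invokes transitivity of the equivalence relation, which the paper establishes only in the subsequent lemma and by means of this very lemma; your own residue-field argument ($\alpha\circ\gamma=\mathrm{id}_{k(x)}$ via surjectivity of $\mathcal{O}_{X,x}\twoheadrightarrow k(x)$), applied directly to a refinement witnessing the equivalence of each triple with $(k,u,v)$, already yields $V=V'=k^{+}\cap k(x)$ without appealing to transitivity.
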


\begin{proof}

We consider $x \in X$ which is given by the image of Spa$(k,k^{\circ}) \rightarrow X$. By assumption $u$ is an adic morphism. This implies $x$ has no vertical generalization since it has height 0 or 1 depending on $(k,k^{\circ})$. We choose $s \in S$ as the image of the closed point of Spa$(k,k^+)$ under Spa$(k,k^+) \rightarrow S$. We have that $f(x)$ is a vertical generalization of $s$. Then we set $V \coloneqq k(x) \cap k^+$ and $u_x \colon \mathrm{Spa}(k(x),k(x)^{\circ}) \rightarrow X$ is the natural map. Note that $V \subset k(x)^+$ because $k(x)^+ =k(x)^{\circ}$ and thus $V$ is a valuation ring of $x$. We have that $s$ is a center of $(x,V)$ on $S$. Hence, we get a natural map $v_x \colon \mathrm{Spa}(k(x),V) \rightarrow S$ and $V$ has to be chosen like that for $(k(x),u_x,v_x)$ to be equivalent to $(k,u,v)$.

\end{proof}

\begin{lemma}
	Let $X \rightarrow S$ be a morphism of adic spaces. The defined relation on points of $X$ with center on $S$ in Definition \ref{square} is an equivalence relation. 
\end{lemma}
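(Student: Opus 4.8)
The plan is to reduce the statement to Lemma \ref{triple}. That lemma attaches to each point $p=(k^{+},u,v)$ of $X$ with center on $S$ a triple $\tau(p)=(x,V,s)$, where $x$ is the image of the (one-point) space $\mathrm{Spa}(k,k^{\circ})\to X$, $s$ is the image of the closed point of $\mathrm{Spa}(k,k^{+})\to S$, and $V=k(x)\cap k^{+}$; moreover it provides a canonical factorization of $p$ through the point $(V,u_{x},v_{x})$ over the residue field $k(x)$, in which $u_{x}$ depends only on $x$ and $v_{x}$ only on $(x,V,s)$. Writing $p_{1}\sim p_{2}$ for the relation of Definition \ref{square}, the key claim will be that $p_{1}\sim p_{2}$ if and only if $\tau(p_{1})=\tau(p_{2})$. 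Since equality of triples is visibly an equivalence relation, this characterization yields reflexivity, symmetry and transitivity at once; in particular one avoids arguing transitivity directly, which would be awkward because the intersection of the two intermediate fields appearing in the witnessing squares need not be a complete field.

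First I would handle symmetry, which is immediate from the symmetric shape of Definition \ref{square}, and then the implication $\tau(p_{1})=\tau(p_{2})\Rightarrow p_{1}\sim p_{2}$. Here one takes as common third point the canonical point $(V,u_{x},v_{x})$ over $k(x)$ furnished by Lemma \ref{triple} applied to $p_{1}$; applied to $p_{2}$ the same lemma produces the same data $u_{x},v_{x}$, since these depend only on the common triple $(x,V,s)$. The two factorizations supplied by Lemma \ref{triple} are then precisely the two commutative diagrams required in Definition \ref{square}, and the compatibility condition $k_{i}^{+}\cap k(x)=V$ holds by the very definition $V=k(x)\cap k_{i}^{+}$. (Reflexivity is the special case $p_{1}=p_{2}$.)

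The substantive step is the converse: if $p_{1}\sim p_{2}$ then $\tau(p_{1})=\tau(p_{2})$. Suppose a common third point $q$ over an affinoid field $(\ell,W)$ witnesses the equivalence, so that there are embeddings $\ell\hookrightarrow k_{i}$ with $k_{i}^{+}\cap\ell=W$ and commutative diagrams as in Definition \ref{square}. I would read off each of the three components of $\tau(p_{i})$ from $q$ alone. Since $\mathrm{Spa}(k_{i},k_{i}^{\circ})$ and $\mathrm{Spa}(\ell,\ell^{\circ})$ are single-point spaces, commutativity of the upper triangle forces the image of $p_{i}$ in $X$ to equal the image of $\mathrm{Spa}(\ell,\ell^{\circ})\to X$, which is independent of $i$. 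The map $\mathrm{Spa}(k_{i},k_{i}^{+})\to\mathrm{Spa}(\ell,W)$ sends the closed point to the closed point, because restricting the canonical valuation with valuation ring $k_{i}^{+}$ to $\ell$ gives a valuation with valuation ring $k_{i}^{+}\cap\ell=W$, i.e.\ the closed point of $\mathrm{Spa}(\ell,W)$; hence the center $s$ is the image of the closed point of $\mathrm{Spa}(\ell,W)\to S$, again independent of $i$. Finally, the residue field $k(x)$ embeds into $\ell$ via $q$, compatibly with $k(x)\hookrightarrow k_{i}$, so $V=k(x)\cap k_{i}^{+}=k(x)\cap(\ell\cap k_{i}^{+})=k(x)\cap W$, independent of $i$. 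Thus $\tau(p_{1})=\tau(p_{2})$.

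I expect the main obstacle to be the bookkeeping in this converse direction: verifying that the relevant adic spectra of valued fields are single points, that closed points are preserved under the relevant maps, and ---above all--- keeping track of the ambient field in which each intersection $k(x)\cap k_{i}^{+}$ is formed, which is legitimate because of the chain $k(x)\subseteq\ell\subseteq k_{i}$. None of this is deep; it is simply the place where the definitions of a point of $X$ with center on $S$, of the equivalence relation, and of the triple of Lemma \ref{triple} must all be unwound simultaneously and carefully.
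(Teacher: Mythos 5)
Your proof is correct and follows essentially the same route as the paper: both rest on Lemma \ref{triple}, using the uniqueness of the canonical triple $(x,V,s)$ and the intersection computation $k_i^{+}\cap k(x)$ through the witnessing field to compare the two sides. Your packaging of the argument as the characterization ``$p_1\sim p_2$ iff the canonical triples coincide'' is just a slightly more systematic form of the paper's transitivity argument, which applies the same observation to the two witness points.
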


\begin{proof}
By definition the relation is reflexive and symmetric.
We consider three points $(k_i,u_i,v_i)$ of $X$ with center on $S$ for $i=1,2,3$ such that $(k_1,u_1,v_1)$ is equivalent to $(k_2,u_2,v_2)$ and the latter triple is equivalent to $(k_3,u_3,v_3)$. Let $(k,u,v)$ and $(k',u',v')$ be witnesses that the triples $(k_i,u_i,v_i)$ are equivalent for $i=1,2$, and $i=2,3$ respectively. We denote the valuation rings of $k$ and $k'$ from the second diagram in the bottom row with $V$ and $V'$. We get for $(k,u,v)$ and $(k',u',v')$ by the lemma above unique triples $(x,W,s)$ and $(x',W',s')$. Since $(k_2,u_2,v_2)$ is equivalent to $(k,u,v)$ and $(k',u',v')$ we get that $(x,W,s)$ and $(x,',W',s')$ are also triples for $(k_2,u_2,v_2)$. By uniqueness of this triple we get $x=x', W=W'$ and $s=s'$.  Furthermore we have $k_3^+ \cap k(x) = k_3^+ \cap k' \cap k(x) = V' \cap k(x) = W = V \cap k(x) = k_1^+ \cap k \cap k(x) = k_1^+ \cap k(x)$. Thus, we get the following commutative diagram
\begin{center}
	\begin{tikzcd}
		\mathrm{Spa}(k_i,k_i^{\circ}) \arrow[d] \arrow[r] \arrow[rr, bend left, "u_i"] & \mathrm{Spa}(k(x), k(x)^{\circ}) \arrow[d] \arrow[r] & X \arrow[d] \\
		\mathrm{Spa}(k_i,k_i^+) \arrow[r] \arrow[rr, bend right, "v_i"] & \mathrm{Spa}(k(x), W) \arrow[r] & S, \\
	\end{tikzcd}
\end{center}
for $i=1,3$.

\end{proof}

\begin{definition}\label{expl}
	
	For a morphism $X \rightarrow S$ of adic spaces, we denote by $\mathrm{Spa}(X,S)$  the set of equivalence classes of points of $X$ with center on $S$.
	
\end{definition}

\begin{lemma}
	
	Let $f \colon X \rightarrow S$ be a morphism of adic spaces and $U \subset X$ and $V \subset S$ open affinoid subsets with $f(U) \subset V$. Then there exists a injective map $\mathrm{Spa}(U,V) \hookrightarrow \mathrm{Spa}(X,S)$.
	
\end{lemma}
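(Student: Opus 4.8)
The plan is to define the map by post-composing with the two open immersions, and to prove injectivity by translating everything into the triples furnished by Lemma~\ref{triple}.

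First I would note that $f(U)\subset V$ forces the restriction $f|_U\colon U\to S$ to factor as $U\xrightarrow{g}V\hookrightarrow S$ for a unique morphism $g$ of adic spaces, so that $\mathrm{Spa}(U,V)$ is formed with respect to $g$. Given a point $(k^+,u,v)$ of $U$ with center on $V$, I post-compose $u$ with the open immersion $\iota_U\colon U\hookrightarrow X$ and $v$ with $\iota_V\colon V\hookrightarrow S$. The outer square then commutes, since $f\circ\iota_U=\iota_V\circ g$, and $\iota_U\circ u$ is again adic because open immersions are adic and adic morphisms are stable under composition; hence $(k^+,\iota_U\circ u,\iota_V\circ v)$ is a point of $X$ with center on $S$. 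If two points of $U$ with center on $V$ are equivalent, post-composing the diagrams witnessing this with $\iota_U$ and $\iota_V$ yields diagrams witnessing equivalence of the images: the middle object $\mathrm{Spa}(k,W)$ of the witness (with $W$ its valuation ring) and the condition $k_i^+\cap k=W$ are untouched by post-composition. So we obtain a well-defined map $\varphi\colon\mathrm{Spa}(U,V)\to\mathrm{Spa}(X,S)$.

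For injectivity I would use Lemma~\ref{triple}, which attaches to every point of $U$ with center on $V$ a unique triple $(x,W,s)$, with $x\in U$ having no vertical generalization, $W\subset k(x)^+$ a valuation ring, and $s$ a center of $(x,W)$ on $V$, the point being equivalent to the canonical point built from its triple. Together with the observation—implicit in the proof that the relation is an equivalence relation—that a point dominating another carries the same triple, this makes the assignment ``point $\mapsto$ triple'' factor through a bijection between $\mathrm{Spa}(U,V)$ and the set of such triples, and likewise for $\mathrm{Spa}(X,S)$. Since an open immersion does not alter the local rings at points, the fields $k(x)$, $k(x)^+$, $k(x)^\circ$ and the intersection $W=k(x)\cap k^+$ are the same whether computed in $U$ or in $X$, and the canonical maps $\mathrm{Spa}(k(x),k(x)^\circ)\to X$ and $\mathrm{Spa}(k(x),W)\to S$ attached by Lemma~\ref{triple} to $\varphi$ of a point $p$ are just the composites with $\iota_U,\iota_V$ of the ones attached to $p$. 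Hence, under the two bijections, $\varphi$ corresponds to the map of triples $(x,W,s)\mapsto(x,W,s)$ that regards $x\in U$ as a point of $X$ and $s\in V$ as a point of $S$, leaving $W$ fixed. As $\iota_U$ and $\iota_V$ are injective on underlying sets, this map of triples is injective: if $(x_1,W_1,s_1)$ and $(x_2,W_2,s_2)$ agree as triples over $X\to S$, then $x_1=x_2$ in $U$, $s_1=s_2$ in $V$, and $W_1=W_2$, so the triples already agree over $U\to V$. Therefore $\varphi$ is injective.

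I do not anticipate a serious obstacle here; the only step requiring care is the identification ``equivalence classes $\leftrightarrow$ triples'', i.e.\ that equivalent points share a triple, which I would extract cleanly from Lemma~\ref{triple} and the surrounding equivalence-relation argument rather than reprove from scratch. A tempting shortcut—arguing directly that a witness to an equivalence of $\varphi(p_1)$ and $\varphi(p_2)$ already factors through $U$ and $V$, hence is a witness over $U\to V$—is less convenient: while the source $\mathrm{Spa}(k,k^\circ)$ of a witness does land in $U$, its companion $\mathrm{Spa}(k,W)$ need not land in the open subset $V$ (only its closed point maps to a specialization of a point of $V$), so this route would require extra bookkeeping that passing to triples avoids.
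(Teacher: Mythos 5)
Your proof is correct and is essentially the argument the paper has in mind: the paper simply asserts the lemma "is a consequence of the definition" of $\mathrm{Spa}(U,V)$ and $\mathrm{Spa}(X,S)$, i.e.\ the composition map you construct, with injectivity read off from the definitions via Lemma \ref{triple}. Your write-up is a careful unpacking of that (composition with the open immersions, well-definedness on equivalence classes, and the identification of classes with triples), and the local nature of residue fields, centers, and vertical generalizations on open subspaces makes each step go through as you claim.
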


\begin{proof}

This is a consequence of the definition of $\mathrm{Spa}(U,V)$ 
and $\mathrm{Spa}(X,S)$.

\end{proof}

\begin{proposition}\label{bab}
	
	For a morphism $f \colon \mathrm{Spa}(A,A^+) \rightarrow \mathrm{Spa}(R,R^+)$ the set $\mathrm{Spa}(U,S)$ is bijective to $U_c$.
	
\end{proposition}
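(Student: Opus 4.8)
Write $A := \mathcal{O}_U(U)$, $R := \mathcal{O}_S(S)$, and recall $U_c = \mathrm{Spa}(A, I(U))$ with $I(U)$ the integral closure of $\mathrm{im}(\mathcal{O}_S^+(S))[A^{\circ\circ}]$ in $A$, as in Proposition \ref{relcl}. The plan is to produce mutually inverse maps between $\mathrm{Spa}(U,S)$ and the underlying set of $U_c$. By Lemma \ref{triple} I would first replace $\mathrm{Spa}(U,S)$ by the set $\mathcal{T}$ of triples $(x,V,s)$, where $x \in U$ has no vertical generalization, $(x,V)$ is a valuation ring on $U$, and $s$ is a center of $(x,V)$ on $S$. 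Since $S$ is affinoid, Lemma \ref{criterion} ii) shows that such an $s$ is unique when it exists, and Lemma \ref{criterion} i) (read for the honest valuation ring $(x,V)$, so that the only nonvacuous condition is $i(\mathcal{O}_S^+(S)) \subseteq V$) shows it exists precisely when the image of $\mathcal{O}_S^+(S)$ in $k(x)$ lies in $V$. Hence $\mathcal{T}$ is in bijection with the set $\mathcal{P}$ of pairs $(x,V)$ with $x \in U$ having no vertical generalization and $V$ a valuation ring of $k(x)$ satisfying $\mathrm{im}(\mathcal{O}_S^+(S)) \subseteq V \subseteq k(x)^+$.

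Next I would define $\Psi \colon \mathcal{P} \to U_c$ by sending $(x,V)$ to the valuation $w_{x,V}$ on $A$ obtained by composing the canonical map $A \to k(x)$ with the valuation of $k(x)$ whose valuation ring is $V$. This $w_{x,V}$ has the same support as $v_x$ and valuation ring $V \subseteq k(x)^+$, hence it is a vertical specialization of $v_x$, and therefore continuous since $\mathrm{Cont}(A)$ is stable under vertical specialization (support and rank $\leq 1$ coarsening are unchanged). Moreover $w_{x,V}(g) \leq 1$ for every $g \in I(U)$: the image of $\mathcal{O}_S^+(S)$ lies in $V$ by hypothesis, the image of $A^{\circ\circ}$ consists of topologically nilpotent elements of $k(x)$, which lie in the clopen valuation ring $V$ by Remark \ref{spec. field} iv), and $V$ is integrally closed; so the image of $I(U)$ lies in $V$. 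Thus $w_{x,V} \in U_c$. Conversely I would define $\Theta \colon U_c \to \mathcal{P}$ by sending $w$ to $(x_w, V_w)$, where $x_w$ is the rank $\leq 1$ coarsening of $w$ (the trivial coarsening if $w$ is non-analytic): it lies in $U$ and has no vertical generalization, because its valuation ring contains the powerbounded elements of $k(w)$, hence the image of $\mathcal{O}_U^+(U)$; and $V_w := k(w)^+$, a valuation ring of $k(w) = k(x_w)$ contained in $k(x_w)^+$ and containing the image of $\mathcal{O}_S^+(S)$ because $\mathcal{O}_S^+(S)$ maps into $I(U)$ and $w(I(U)) \leq 1$. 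The point $x_w$ is equivalently described by the adic morphism $\mathrm{Spa}(k(w), k(w)^\circ) \to U$ that appears in the square of Definition \ref{square}.

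It then remains to check that $\Psi$ and $\Theta$ are mutually inverse. Starting from $(x,V) \in \mathcal{P}$, the valuation $w_{x,V}$ satisfies $k(w_{x,V}) = k(x)$ and has valuation ring $V$, so its rank $\leq 1$ coarsening is $v_x$ and $V_{w_{x,V}} = V$. Starting from $w \in U_c$, the valuation $w_{x_w,V_w}$ has the same support as $w$ and valuation ring $k(w)^+$, hence equals $w$. Since $\mathrm{Spa}(U,S)$ is by construction a set of equivalence classes and Lemma \ref{triple} identifies each class with a unique triple, these maps descend to a bijection $\mathrm{Spa}(U,S) \cong U_c$.

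I expect the main obstacle to be the precise matching of the ``center on $S$'' datum with the defining inequality of $U_c = \mathrm{Spa}(A, I(U))$: one must use the description of $I(U)$ as an integral closure together with the fact that topologically nilpotent elements lie in every valuation ring contained in $k(x)^+$ (Remark \ref{spec. field} iv)), and one must identify, for an arbitrary point of $U_c$, the point of $U$ lying above it, treating analytic and non-analytic points separately when producing the adic morphism $\mathrm{Spa}(k, k^\circ) \to U$. A secondary technical point is the stability of continuity under vertical specialization, used to see that $w_{x,V}$ really is a point of $U_c$.
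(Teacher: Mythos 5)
Your proposal is correct, and it establishes the same underlying bijection as the paper: in both arguments a point of $U_c$ corresponds to the data of its maximal (rank $\leq 1$) vertical generalization $x \in U$ together with the valuation ring $V \subseteq k(x)^+$ of which it is the center, and the center on $S$. The difference is in execution. The paper gets the forward map for free by citing Remark \ref{lift}: since $(x,V,s)$ has a center on $S$, the lifting property proved in Theorem \ref{baby} produces the unique center of $(x,V)$ on $U_c$; bijectivity then comes from Proposition \ref{relcl} ii). You instead write the image point down explicitly as the composite valuation $w_{x,V}$ and verify membership in $\mathrm{Spa}(A,I(U))$ by hand from the integral-closure description of $I(U)$ (openness and integral closedness of $V$, topological nilpotence of the image of $A^{\circ\circ}$, continuity under vertical specialization) --- which is exactly the computation inside the proof of Theorem \ref{baby} --- and you reprove Proposition \ref{relcl} ii) via the rank $\leq 1$ coarsening, citing in effect the same facts (Wedhorn 7.41/7.42) the paper uses there. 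Your route buys an explicit formula and self-containedness; the paper's buys brevity, since both nontrivial inputs were already established. One small imprecision to fix: in defining $\Theta$ you write $k(w)=k(x_w)$ and set $V_w=k(w)^+$, but for a vertical specialization one only has a (dense) embedding $i\colon k(w)\to k(x_w)$; the correct object is the unique valuation ring $V_w\subseteq k(x_w)^+$ with $i^{-1}(V_w)=k(w)^+$, whose existence and uniqueness is Lemma \ref{center} ii), iii). With that reading your two maps are indeed mutually inverse and the argument goes through.
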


\begin{proof}
We denote $U \coloneqq  \mathrm{Spa}(A,A^+)$ and $S \coloneqq \mathrm{Spa}(R,R^+)$.
By Lemma \ref{triple} the set Spa$(U,S)$ consists of triples $(x,V,s)$ with $(x,V)$ a valuation ring on $U$ with $s$ a center on $S$ of this valuation ring and such that $x$ is maximal with respect to vertical generalizations. Then in Remark \ref{lift} we mentioned that we can lift the center $s$ uniquely to a center of $(x,V)$ on $U_c$. This gives a well-defined map $\phi \colon $Spa$(U,S) \rightarrow U_c$. Additionally in Proposition \ref{relcl} ii) we saw that  every point $x'$ of $U_c$ is a vertical specialization of a point $x$ in $U$ which has rank 0 or rank 1, that means $x$ has no vertical generalizations. In particular, there is a unique triple $(x,V,f(x'))$ that is mapped to $x'$ under $\phi$, establishing the bijectivity of this map.

\end{proof}

We endow, by the lemma above, for a morphism $U \rightarrow S$ of affinoid adic spaces the set Spa$(U,S)$ with an adic structure, which is given by $U_c$. 
This gives rise to the following definition.
\begin{definition}\label{top}
	Let $X \rightarrow S$ be a morphism of adic spaces and $U \subset X, V \subset S$ open affinoid subsets such that the image of $U$ is contained in $V$ with $U \rightarrow V$ arbitrary. We have the inclusion Spa$(U,V) \hookrightarrow$ Spa$(X,S)$. Then we endow Spa$(X,S)$ with the final topology for the family of all such inclusions.
\end{definition}

\begin{theorem}\label{spa}
	
	Let $X$ be a weakly square complete adic space and $S$ a square complete and stable adic space and $f \colon X \rightarrow S$ a morphism of locally of $^+$weakly finite type, separated and taut and $f' \colon X' \rightarrow S$ the universal vertical compactification of $f$. Then $\mathrm{Spa}(X,S)$ is homeomorphic to $X'$.
	
\end{theorem}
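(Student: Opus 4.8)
The plan is to construct an explicit bijection $\Phi\colon X'\to\mathrm{Spa}(X,S)$, verify that it is continuous, and then — the technical heart of the matter — that it is open. As in the proof of Theorem~\ref{main} one first reduces to the case $S$ affinoid: for an affinoid open cover $\{S_i\}$ of $S$ and a triple $(x,V,s)$ as in Lemma~\ref{triple}, the point $s$ lies in some $S_i$ and is a vertical specialization of $f(x)$ (it being a center of $(x,V)$), so $f(x)\in S_i$ because open subspaces are stable under vertical generalization (as used in the proof of Lemma~\ref{min}); hence $x\in f^{-1}(S_i)$, so $\mathrm{Spa}(X,S)$ is covered by the $\mathrm{Spa}(f^{-1}(S_i),S_i)$, while $X'=\bigcup_iX'_i$ by Lemma~\ref{mod}~iv) and each $f^{-1}(S_i)$ is again weakly square complete by Proposition~\ref{weaksqmor}. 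As in Theorem~\ref{main} these data glue compatibly, so we may assume $S$ affinoid; then $X'$ is glued from the open subspaces $U_d\subset U_c$ of Definition~\ref{cmap} with $U$ running over the open affinoid subsets of $X$ for which $U\to S$ is of $^+$weakly finite type, and $\mathrm{Spa}(U,S)$ carries the topology of $U_c$ through the bijection of Proposition~\ref{bab}.

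\emph{The bijection.} For $z\in X'$, let $x_z$ be the unique vertical generalization of $z$ of rank $\le1$. Since $j(X)$ is open in $X'$, hence stable under vertical generalization, and every point of $X'$ is a vertical specialization of a point of $j(X)$, we have $x_z\in j(X)$, which we identify with $X$. Let $V_z\subset k(x_z)^+$ be the valuation ring having $z$ as a center of $(x_z,V_z)$; it is unique, by the argument of Lemma~\ref{val}~ii), using that $k(z)$ is dense in $k(x_z)$ (Remark~\ref{spec. field}~i)) and that valuation rings are clopen (Remark~\ref{spec. field}~iv)). Set $\Phi(z):=(x_z,V_z,f'(z))$, which is a point of $X$ with center on $S$ by Proposition~\ref{cut}, hence an element of $\mathrm{Spa}(X,S)$ by Lemma~\ref{triple}. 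Conversely, given $(x,V,s)\in\mathrm{Spa}(X,S)$, the valuation ring $(x,V)$ of $X$ has center $s$ on $S$, so by the universally vertically specializing property of $f'$ established in Theorem~\ref{main} it has a center on $X'$, unique over $s$ since $f'$ is separated, hence vertically separated; call it $\Psi(x,V,s)$. Because $x$ has no vertical generalization in $X$, hence none in $X'$, it is the rank-$\le1$ vertical generalization of $\Psi(x,V,s)$ and $V$ the associated valuation ring; together with the uniqueness of the center over $s$ this gives $\Phi\circ\Psi=\mathrm{id}$ and $\Psi\circ\Phi=\mathrm{id}$, so $\Phi$ is a bijection with inverse $\Psi$.

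\emph{Continuity of $\Phi$.} As $\{U_d\}$ is an open cover of $X'$, it suffices that each $\Phi|_{U_d}$ be continuous. For $z\in U_d$ the vertical generalizations of $z$ all lie in the open set $U_d$, whether computed in $X'$ or in $U_c$, so $x_z$, $V_z$ and $f'(z)$ coincide with the data attached to $z$ inside $U_c$; hence, under $\mathrm{Spa}(U,S)\cong U_c$, the element $\Phi(z)$ is simply the image of $z$, and $\Phi|_{U_d}$ is the composite of the open immersion $U_d\hookrightarrow U_c$, the homeomorphism $U_c\cong\mathrm{Spa}(U,S)$, and the map $\mathrm{Spa}(U,S)\to\mathrm{Spa}(X,S)$ from the defining family of the final topology — all continuous. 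It remains to show $\Phi$ is open, equivalently that $\Psi$ is continuous; by the universal property of the final topology this is equivalent to the continuity of each composite $\theta_U\colon U_c\cong\mathrm{Spa}(U,S)\xrightarrow{g_U}\mathrm{Spa}(X,S)\xrightarrow{\Psi}X'$.

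\emph{The main obstacle.} This last step is where the real work lies. On $U_d$ the map $\theta_U$ is the open immersion $U_d\hookrightarrow X'$; the point is that on $U_c\setminus U_d$ it ``folds'' points onto the remaining charts. For $U,V\in\mathcal{F}$, using that $x_z$ is also the rank-$\le1$ vertical generalization of $\theta_U(z)$ and that a center on a chart determines its valuation ring, one identifies $\theta_U^{-1}(V_d)=c_U(R(U)\cap T(V))\subset U_c$; this contains the subset $Q_{U,V}=c_U(T(U\cap V))$, open in $U_c$ by Lemma~\ref{openset}~iii), on which $\theta_U$ is (via the gluing maps $\lambda_{V,U}$ of Theorem~\ref{main}) the canonical inclusion, together with further points of $U_c\setminus U_d$ folded in from the $V$-chart. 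The crux is that $\theta_U^{-1}(V_d)$ is open in $U_c$ and that $\theta_U$ restricts to a continuous map onto $V_d$; I expect this to follow by the same arguments that prove the corresponding statements in Lemmas~\ref{cmapprop} and~\ref{openset}, namely by exhibiting the relevant complements as sets of specializations of pro-constructible subsets of $U_c$ (using tautness of $X$, Lemma~\ref{minimal}, Lemma~\ref{val}, and Lemma~\ref{horizontal} to treat horizontal specializations), with Proposition~\ref{homeo} and the stalk identifications of Corollary~\ref{iso} providing the comparison between charts. Granting this, $\{U_d\}$ is an open cover of $X'$ on which $\Phi$ restricts to homeomorphisms onto open subsets of $\mathrm{Spa}(X,S)$, and therefore $\Phi$ is a homeomorphism.
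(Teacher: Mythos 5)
Your construction of the bijection and your continuity argument for $\Phi$ (restricting to the open cover $\{U_d\}$ of $X'$ and identifying $\Phi|_{U_d}$ with $U_d\hookrightarrow U_c\cong\mathrm{Spa}(U,S)\to\mathrm{Spa}(X,S)$) are sound and essentially agree with the corresponding half of the paper's proof. But the other half — openness of $\Phi$, i.e.\ continuity of $\Psi$, i.e.\ continuity of each $\theta_U\colon U_c\to X'$ — is exactly the crux of the theorem, and your proposal does not prove it: the final paragraph identifies $\theta_U^{-1}(V_d)$ with $c_U(R(U)\cap T(V))$ and then says ``I expect this to follow by the same arguments'' and ``Granting this''. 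That is a genuine gap, and not a small one. Even if you established that $\theta_U^{-1}(V_d)$ is open for every $V\in\mathcal{F}$ by redoing the pro-constructibility and specialization-closure arguments of Lemmas \ref{cmapprop} and \ref{openset}, you would still need that $\theta_U$ is continuous \emph{into} $V_d$, i.e.\ that preimages of arbitrary opens of $V_d$ are open; $\theta_U$ is a priori only a map of sets, and on the part of $U_c\setminus U_d$ that is ``folded'' into the $V$-chart there is no chart-comparison morphism available in the paper's toolkit ($\psi_{U,U\cap V}$ goes from $(U\cap V)_c$ into $U_c$ and $V_c$, not out of $U_c$), so this is not a routine adaptation.

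The paper closes precisely this gap by a different device: it never verifies continuity of $\theta_U$ by hand. Instead, since $\overline{U}^V\to V$ is the universal vertical compactification of $U\to V$ (Theorem \ref{baby}) and $f'^{-1}(V)\to V$ is vertically partially proper (being itself a universal vertical compactification by Lemma \ref{mod} iv)), the universal property produces a canonical \emph{morphism of adic spaces} $h\colon\overline{U}^V\to f'^{-1}(V)\subset X'$ extending $U\hookrightarrow X'$; one then checks, using that $f'$ is vertically separated and that centers over a fixed point of $S$ are unique, that $h$ agrees pointwise with $\Psi\circ g_U$ under $U_c\cong\overline{U}^V\cong\mathrm{Spa}(U,V)$. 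Continuity of $\theta_U$ is then automatic. If you replace your ``main obstacle'' paragraph by this universal-property argument, your proof matches the paper's; as written, the statement is not proved.
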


\begin{proof}

In Theorem \ref{main} we saw that every point of $X'$ is a vertical specialization of a point of $X$ which has no vertical generalization. Then the proof works exactly the same as in Lemma \ref{bab} and we get a bijection $\phi \colon $Spa$(X,S) \rightarrow X'$, where we map $(x,V,s)$ to $x' \in X'$ which is a center of $(x,V)$ with $f'(x') = s$. Note that this assignment is well-defined, because $f'$ is vertically separated.
\bigskip

We show that $\phi$ is continuous. Let $W \subset X'$ be an open affinoid subset. We have to show that for open affinoid subsets $U \subset X$, $V \subset S$ with $f(U) \subset V$ we get that $\psi^{-1}(\phi^{-1}(W))$ is open, where $\psi \colon$ Spa$(U,V) \hookrightarrow$ Spa$(X,S)$ is the inclusion. Since the adic structure on Spa$(U,V)$ comes from $\overline{U}^V$, the bijection between them is actually an isomorphism of adic spaces. We have to show that the following diagram commutes
\begin{center}
	\begin{tikzcd}
		\text{Spa}(X,S) \arrow[r, "\phi"] & X' \\
		\text{Spa}(U,V) \arrow[u, hookrightarrow, "\psi"] \arrow[r, "\cong"] &\overline{U}^V  \arrow[u, "h", swap]  \\
	\end{tikzcd}
\end{center}

and that the right-hand side vertical arrow exists as morphism of adic spaces. For this we consider the diagram below

\begin{center}
	\begin{tikzcd}
		U \arrow[r, hookrightarrow] \arrow[d] & f^{-1}(V)  \arrow[r] \ \arrow[d] & V \arrow[r, hookrightarrow] & S \\
		\overline{U}^V \arrow[r, dashed] \arrow[urr, "g", near start, crossing over] \arrow[rr,bend right, "h"] & f'^{-1}(V) \arrow[ur] \arrow[r, hookrightarrow] & X'. \arrow[ur, "f'"] \\
	\end{tikzcd}
	
\end{center}
Note that by Theorem \ref{main} the conditions of Lemma \ref{mod} iv) hold. Hence $f'^{-1}(V) \rightarrow V$ is a universal vertical compactification of $f^{-1}(V) \rightarrow V$, in particular the former map is vertically partially proper. Since $\overline{U}^V \rightarrow V$ is a universal vertical compactification of $U \rightarrow V$, this explains the existence of the dashed arrow and also the right vertical arrow of the first diagram above. Clearly the second diagram is commutative. Now we explain why the first diagram is commutative. Consider a point $(x,A,s)$ in Spa$(U,V)$ with $x \in U$ and $s \in V$. This point is mapped to a point $x' \in \overline{U}^V$ such that $g(x') = s$ and $x'$ is a center of $(x,A)$ and by commutativity of the second diagram it follows that $f'(h(x')) = g(x') =s$. Now $\psi$ is just the inclusion and $\phi$ maps $(x,A,s)$ to a point $x'' \in X'$ with $f'(x'') =s$ and $x''$ is a center of $(x,A)$. Since $f'$ is vertically separated we get $h(x') = x''$. This shows the commutativity of the first diagram above and the continuity of $\phi$, because $h$ is continuous.
\bigskip

It is left to show that $\phi$ is open. Let $W$ be an open subset of Spa$(X,S)$, this means for every $\psi$ as defined before $\psi^{-1}(U)$ is open. Together with $\phi$ being a bijection and the first diagram above being commutative, we get that $h^{-1}(\phi(W))$ is open. Since $V$ is an affinoid set and $f'^{-1}(V)$ is a universal vertical compactification of $f^{-1}(V) \rightarrow V$, we know from the proof of Theorem \ref{main} that $f'^{-1}(V)$ is covered by sets of the form $U_d$ for open affinoid subsets $U \subset f^{-1}(V)$. The following diagram
\begin{center}
	\begin{tikzcd}
		U \arrow[r, hookrightarrow]  \arrow[d, hookrightarrow] & f^{-1}(V) \arrow[dd, "j"] & \\
		U_d \arrow[dr, "\iota"] \arrow[d, hookrightarrow] & & \\
		\overline{U}^V \arrow[r, "g'"] & f'^{-1}(V) \arrow[r] & V \\		
	\end{tikzcd}
\end{center}
is commutative: Because if $x' \in U_d \subset \overline{U}^V$ is a center of a valuation ring $(x,A)$ with support in $U$ we get that $\iota(x')$, $g'(x')$ are centers of $(x,A)$ on $f'^{-1}(V)$. Together with $f'^{-1}(V) \rightarrow V$ being vertically separated and $V$ being an affinoid space this implies that $(x,A)$ has at most one center on $f'^{-1}(V)$. Thus we have $\iota(x') = g'(x')$. Let denote $U_d \rightarrow f'^{-1}(V) \hookrightarrow X'$ with $\iota'$. Since $h^{-1}(\phi(W))$ is open we get by the commutative diagram above that $\iota'^{-1}(\phi(W))$ is open. This implies that $\phi(W) \cap U_d \subset f'^{-1}(V)$ is open. Consequently $f'^{-1}(V) \cap \phi(W)$ is open because if all $U$ cover $f^{-1}(V)$ we have all $U_d$ cover $f'^{-1}(V)$. Finally $\phi(W)$ is open in $X'$, since $f'^{-1}(V)$ is an open covering of $X'$.
\end{proof}

With the last theorem, we can view $\mathrm{Spa}(X,S)$ as an adic space via structure transport. We would like to close the article with the following remark:

\begin{remark}    
	
	As a final remark, we would like to mention that there is an open conjecture about whether one can endow $\mathrm{Spa}(X,S)$ with the structure of an adic space under assumptions weaker than those in Theorem \ref{main}. In particular, it would be interesting to achieve this for arbitrary $X$.
\end{remark}

\bibliographystyle{alpha}
\bibliography{quellen}

\end{document}